\title{Equivalence of $v$-decomposition matrices for blocks of Ariki-Koike algebras}
\author[A.~Dell'Arciprete]{alice Dell'Arciprete}
\address{School of Mathematics, \\University of East Anglia, \\Norwich NR4 7TJ, UK.}
\email{alice.dellarciprete@gmail.com}
\keywords{Ariki-Koike algebras, blocks, $v$-decomposition numbers}
\thanks{}
\numberwithin{equation}{section}
\numberwithin{figure}{section}
\newtheorem{thm}{Theorem}[section]
\newtheorem{prop}[thm]{Proposition}
\newtheorem{lem}[thm]{Lemma}
\newtheorem{cor}[thm]{Corollary}
\theoremstyle{definition}
\newtheorem{defin}[thm]{Definition}
\newtheorem{exe}[thm]{Example}
\theoremstyle{rem}
\newtheorem{rem}[thm]{Remark}
\newcommand{\Z}{\mathbb{Z}}
\newcommand{\la}{\lambda}
\definecolor{bead}{gray}{0.2}
\newcommand{\bd}{\begin{picture}(8,6)
\put(4,-1){\line(0,1){8}}
\put(4,3){\circle*{6}}
\end{picture}}
\newcommand{\nb}{\begin{picture}(8,6)
\put(4,-1){\line(0,1){8}}
\put(3,3){\line(1,0){2}}
\end{picture}}
\newcommand{\bigbd}{\begin{picture}(16,12)
\put(8,-2){\line(0,1){22}}
\put(8,6){\circle*{12}}
\end{picture}}
\newcommand{\bignb}{\begin{picture}(16,12)
\put(8,-2){\line(0,1){22}}
\put(6,6){\line(1,0){4}}
\end{picture}}
\newcommand{\bigvd}{\begin{picture}(16,20)
\put(8,10){\circle*{2}}
\put(8,4){\circle*{2}}
\put(8,16){\circle*{2}}
\end{picture}}
\begin{document}
\begin{abstract}
We consider the representation theory of the Ariki-Koike algebra, a $q$-deformation of the group algebra of the complex reflection group $C_r \wr \mathfrak{S}_n$. %The representations of this algebra are naturally indexed by multipartitions of $n$.
We examine blocks of the Ariki-Koike algebra. In particular, we prove a sufficient condition such that restriction of modules leads to a natural correspondence between the multipartitions of $n$ whose Specht modules belong to a block $B$ and those of $n-\delta_i(B)$ whose Specht modules belong to the block $B'$, obtained from $B$ applying a Scopes' equivalence. This bijection gives us an equivalence for the {$v$-decomposition numbers} of the Ariki-Koike algebras.
\end{abstract}
\maketitle
%    Text of article.
\section{Introduction}
Let $n$ be a positive integer. Let $\mathfrak{S}_n$ denote the symmetric group of degree $n$. This has the famous Coxeter presentation with generators $T_1,\ldots, T_{n-1}$ and relations
\begin{align*}
T_i^2&= 1, & \text{ for } 1 &  \leq i \leq n - 1,\\
T_i T_j &= T_j T_i, & \text{ for }1 &  \leq  i < j - 1 \leq n - 2,\\
T_i T_{i+1} T_i &= T_{i+1} T_i T_{i+1}, &  \text{ for } 1 &  \leq  i \leq  n-2.
\end{align*}
If we view this as a presentation for a (unital associative) algebra over a field $\mathbb{F}$, then the algebra we get is the group algebra $\mathbb{F}\mathfrak{S}_n$.\newline
Let $q$ be a non-zero element of $\mathbb{F}$. Now we can introduce a ‘deformation’, by replacing the relation $T_i^2=1$ with
$$(T_i + q)(T_i - 1) = 0$$ for each $i$. The resulting algebra is the Iwahori-Hecke algebra $H_n = H_{\mathbb{F},q}(\mathfrak{S}_n)$ of the symmetric group $\mathfrak{S}_n$. This algebra (of which the group algebra
$\mathbb{F}\mathfrak{S}_n$ is a special case) arises naturally, and its representation theory has been extensively studied. An excellent introduction to this theory is provided by Mathas's book \cite{Mat99}. As long as $q$ is non-zero, the representation theory of $H_n$ bears a remarkable resemblance to the representation theory of $\mathfrak{S}_n$. Indeed, there are many theorems concerning $H_n$ which reduce representation-theoretic notions to statements about the combinatorics of partitions.

In this paper, we consider the representation theory of the Ariki-Koike algebra. This algebra is a deformation of the group algebra of the complex reflection group $C_r \wr \mathfrak{S}_n$, defined using parameters $q, Q_1, \ldots, Q_r \in \mathbb{F}$. The representation theory of these algebras is beginning to be well understood. For example, the simple modules of the Ariki-Koike algebras have been classified; the blocks are known; and, in principle, the decomposition matrices of the Ariki-Koike algebras can be computed in characteristic zero. However, all known algorithms for computing decomposition matrices are recursive and in practice it is only possible to compute them for small values of $n$.  A comprehensive review of the representation theory of the Ariki-Koike algebras can be found in Mathas's paper \cite{Mat04}. In many respects it seems that the Ariki-Koike algebra behaves in the same way as the Iwahori-Hecke algebra $H_n$; many of the combinatorial theorems concerning $H_n$ have been generalised to the Ariki-Koike algebra, with the role of partitions being played by multipartitions. In fact, much of the difficulty of understanding the Ariki-Koike algebra seems to lie in finding the right generalisations of the combinatorics of partitions to multipartitions - very simple combinatorial notions (such as the definition of an $e$-restricted partition) can have rather nebulous generalisations (such as ‘Kleshchev’ multipartitions). 

In \cite{Scopes91}, under some conditions, Scopes establishes a natural correspondence between Specht modules and simple modules in the blocks $B$ and $\phi_i(B)$ of the symmetric groups where $\phi_i$ is the map swapping the runners $i-1$ and $i$ of the abacus display of each partition in the block $B$. This leads to an equivalence of decomposition matrices for these two blocks, meaning that the blocks $B$ and $\phi_i(B)$ have the same decomposition matrices.
In the last part of this paper Scopes proves Donovan's conjecture for blocks of the symmetric groups. In particular, given a partition $\lambda$ of $n$, the bijection $\phi_i$ gives a Morita equivalence between the blocks $B$ of $\lambda$ and the block $\phi_i(B)$ of $\phi_i(\lambda)$ for the symmetric group algebra $\mathbb{F}\mathfrak{S}_n$. The generalisation of Donovan's conjecture and hence of the Morita equivalence between the blocks $B$ and $\Phi_i(B)$ of Ariki-Koike algebras {(with $\Phi_i$ the map $\phi_i$ extended componentwise)} can be {seen as a special case of Theorem 3.3 in \cite{web23} where Webster proves this using $t$-exact Chuang-Rouquier equivalences in the more general setting of highest weight categorifications.} %An analougous result in the Ariki-Koike algebra case can be found in \cite{omarischaps} where Amara-Omari and Schaps combine some combinatorics with the Chuang-Rouquier categorification of integrable highest weight modules over Kac-Moody algebra of affine type $A$.
We would like to underline the fact that Morita equivalence does not imply the equivalence of decomposition matrices. Indeed, if we consider $n=8$ and $p=3$, we have that the block of the partition $(8)$ and the block of the partition $(1^{8})$ for the symmetric group algebra $\mathbb{F}_3\mathfrak{S}_8$ are Morita equivalent, but they have different decomposition matrices.

{This paper is intended as the (graded) Ariki-Koike algebra version of Scopes' paper about decomposition numbers.  In particular, we prove a sufficient condition such that two blocks of the Ariki-Koike algebras have the same graded decomposition matrices and thus they have the same decomposition matrices.}  In Section~\ref{basicdef}, we define the Ariki-Koike algebras and the combinatorial objects that we will use. Thus, we introduce the $r$-multipartition of an integer $n$ and we describe the construction of its abacus configuration with $e$ runners.  In Section \ref{Faybackground}, we summarise all the relevant results from \cite{Fay06} and \cite{Fay07} concerning weight of multipartitions and core blocks of Ariki-Koike algebras. In Section~\ref{multicoreresults}, we show that there is a sequence of multicores with non-increasing weights from a given multicore to a multicore in a core block. Then, we give a lower bound for the minimal difference between the positions of the lowest beads of two consecutive runners. In Section \ref{main_res}, we use this lower bound to get a condition on the weight of the block $B$ of a multipartition so that no configuration $ 
        \resizebox{.059\textwidth}{!}{$\begin{matrix}
i-1 & i \\
        \bd & \nb \\

        \end{matrix}$}
$ appears in the abacus display of any multipartition in $B$. In turn, we show that there is a natural correspondence between {graded Specht modules and graded simple modules} in the blocks $B$ and $\Phi_i(B)$ of the Ariki-Koike algebras.

\section{Basic definitions}\label{basicdef}

\subsection{The Ariki-Koike algebras}\label{AKdef}
Let $r\geq 1$ and $n \geq0$. Let $W_{r,n}$ be the complex reflection group $C_r \wr~\mathfrak{S}_n$. %This has a `Coxeter-like' presentation with generators $T_0, \ldots, T_{n-1}$ and relations
%\todo{\tiny not necessary?}
%\begin{align*}
%T_0^r&= 1,\\
%T_0T_1T_0T_1&= T_1T_0T_1T_0,\\
%T^2_i&= 1, & \text{for }1& \leq i \leq n - 1,\\
%T_iT_j&= T_jT_i, &\text{for }0& \leq i< j-1 \leq n - 2,\\
%T_iT_{i+1}T_i&= T_{i+1}T_iT_{i+1}, &\text{for }1& \leq  i \leq n - 2.
%\end{align*}
We can define the Ariki-Koike algebra as a deformation of the group algebra $\mathbb{F}W_{r,n}$.
\begin{defin}
Let $\mathbb{F}$ be a field and $q,Q_1,\ldots,Q_r$ be elements of $\mathbb{F}$, with $q$ non-zero. Let $\bm{Q}=(Q_1, \ldots, Q_r)$. The \textbf{Ariki-Koike algebra} $\mathcal{H}_{\mathbb{F},q,\bm{Q}}(W_{r,n})$ of $W_{r,n}$ is defined to be the unital associative $\mathbb{F}$-algebra with generators $T_0, \ldots, T_{n-1}$ and relations
\begin{align*}
(T_0-Q_1)\cdots(T_0-Q_r)&=0,\\
T_0T_1T_0T_1&= T_1T_0T_1T_0,\\
(T_i+1)(T_i-q)&=0, & \text{for }1 &\leq i \leq n - 1,\\
T_iT_j &= T_jT_i, &\text{for }0 &\leq i < j-1 \leq n - 2,\\
T_iT_{i+1}T_i &= T_{i+1}T_iT_{i+1}, &\text{for } 1&\leq  i \leq n - 2.\\
\end{align*}
\end{defin}
For brevity, we may write $\mathcal{H}_{r,n}$ for $\mathcal{H}_{\mathbb{F},q, \bm{Q}}(W_{r,n})$. Define $e$ to be minimal such that $1 + q + \ldots + q^{e-1}= 0$, or set $e = \infty$ if no such value exists. Throughout this paper we shall assume that $e$ is finite and we shall refer to $e$ as the \textbf{quantum characteristic}. Set $I = \{0, 1, \ldots, e-1\}$: we will identify $I$ with $\mathbb{Z}/e\mathbb{Z}$. We refer to any $r$-tuple of integers $(a_1, \ldots, a_r) \in \mathbb{Z}^r$ as a \textbf{multicharge}. We say $\bm{Q}$ is \textbf{$q$-connected} if, for each $i\in\{1, \ldots, r\}$, $Q_i = q^{a_i}$ for some $a_i \in \mathbb{Z}$. In \cite{dm02}, Dipper and Mathas prove that any Ariki-Koike algebra is Morita equivalent to a direct sum of tensor products of smaller Ariki-Koike algebras, each of which has $q$-connected parameters. Thus, we may assume that we are always working with a Ariki-Koike algebra with each $Q_i$ being an integral power of $q$. So we assume that we can find an $r$-tuple of integers $\bm{\mathrm{a}} = (a_1, \ldots, a_r)$ such that $Q_i = q^{a_i}$ for each $i$ where $q \neq 1$, so that $q$ is a primitive $e^{\text{th}}$ root of unity in $\mathbb{F}$. We call such $\bm{\mathrm{a}}$ a \textbf{multicharge} of $\mathcal{H}_{r,n}$.
%If $r = 1$ the cyclotomic relation collapses to $T_0 = Q_1 \in \mathbb{F}$ and we obtain the Hecke algebra of type $A$ which is independent of \bm\kappa. We shall write $\mathcal{H}_n$ for $\mathcal{H}_{1,n}$.
Since $e$ is finite then we may change any of the $a_i$ by adding a multiple of $e$, and we shall still have $Q_i = q^{a_i}$. %If $e = \infty$, then we have only one possible choice of multicharge $\kappa$.

For the rest of this section we will fix a multicharge $\bm{\mathrm{a}}=(a_1, \ldots, a_r)\in \mathbb{Z}^r$. Notice that at this stage we are not requiring that $\bm{\mathrm{a}}$ is a multicharge of the Ariki-Koike algebra.

\subsection{Multipartitions}\label{multpar}
A \textbf{partition} of $n$ is defined to be a non-increasing sequence $\lambda = (\lambda_1, \lambda_2, \dots)$ of non-negative integers whose sum is $n$. {The integers $\lambda_b$, for $b\geq1$, are called the \textbf{parts} of $\lambda$. We write $|\lambda| = n$.\\
Since $n < \infty$, there is a $k$ such that $\lambda_b = 0$ for $b > k$ and we write $\lambda = (\lambda_1, \dots , \lambda_k)$. We write $\varnothing$ for the unique empty partition $(0, 0, \dots)$. If a partition has repeated parts, for convenience we group them together with an index. For example, $$(4,4,2,1,0,0, \dots) = (4,4,2,1) = (4^2,2,1).$$
The \textbf{Young diagram} of a partition $\lambda$ is the subset
$$[\lambda] := \{(b,c) \in \mathbb{N}_{>0} \times \mathbb{N}_{>0
}\text{ } | \text{ } c\leq \lambda_b\}.$$
\begin{defin}\label{multipar}
An $r$-\textbf{multipartition} of $n$ is an ordered $r$-tuple $\bm{\lambda} = (\lambda^{(1)}, \dots, \lambda^{(r)})$  of partitions such that $$|\bm{\lambda}| := |\lambda^{(1)}|+ \ldots +|\lambda^{(r)}| = n.$$
If $r$ is understood, we shall just call this a multipartition of $n$.
\end{defin}
As with partitions, we write the unique multipartition of $0$ as $\bm\varnothing$. The \textbf{Young diagram} of a multipartition $\bm\lambda$ is the subset
$$
[\bm{\lambda}]:=\{(b,c,j)\in \mathbb{N}_{>0}\times \mathbb{N}_{>0}\times\{1, \ldots, r\} \text{ }|\text{ } c \leq \lambda_b^{(j)}\}.
$$
We may abuse notation by not distinguishing a multipartition from its Young diagram.}
The elements of $[\bm{\lambda}]$ are called \textbf{nodes} of $\bm\lambda$. We say that a node $\mathfrak{n} \in [\bm{\lambda}]$ is \textbf{removable} if $[\bm{\lambda}]\setminus \{\mathfrak{n}\}$ is also the Young diagram of a multipartition. We say that an element $\mathfrak{n} \in \mathbb{N}^2_{>0}\times \{1, \ldots, r\}$ is an \textbf{addable node} if $\mathfrak{n} \notin [\bm{\lambda}]$ and $[\bm{\lambda}] \cup \{\mathfrak{n}\}$ is the Young diagram of a multipartition. {Let $e \in \{2, 3, \ldots\}$. Given a multicharge $\bm{\mathrm{a}}=(a_1, \ldots, a_r)$, to each node $(b,c,j) \in [\bm\lambda]$ we associate its \textbf{residue} $\mathrm{res}_{\bm{\mathrm{a}}}(b,c,j) = a_j + c-b$ $(\text{mod }e)$. We draw the residue diagram of $\bm\lambda$ by replacing each node in the Young diagram by its residue.}
\begin{exe}\label{exsameblock}
Suppose $r=3$ and $\bm{\mathrm{a}}=(1,0,2)$. Let $\bm{\lambda}=((1^2),(2),(2,1))$ and $\bm\mu=((1),(2,1),(1^3))$ be two multipartitions of $7$. If $e=4$, then the $4$-residue diagram of $[\bm{\lambda}]$ and of $[\bm\mu]$ are
$$
\young(1,0) \quad \young(01) \quad \young(<2><3>,1) \quad \text{ and } \quad \young(1) \quad \young(01,3) \quad \young(2,1,0).
$$
\end{exe}
For an $r$-multipartition $\bm\lambda$, define the \textbf{residue set} of $\bm\lambda$ to be the multiset $\mathrm{Res}_{\bm{\mathrm{a}}}(\bm\lambda) = \{\mathrm{res}(\mathfrak{n}) \text{ } | \text{ } \mathfrak{n} \in [\bm\lambda]\}$. Notice that in the example above
\begin{equation*}
\mathrm{Res}_{\bm{\mathrm{a}}}(\bm\lambda)= \{0,0,1,1,1,2,3\} = \mathrm{Res}_{\bm{\mathrm{a}}}(\bm\mu).
\end{equation*}

\subsection{Kleshchev multipartitions}
Residues of nodes are useful in classifying the simple $\mathcal{H}_{r,n}$-modules. Indeed, the notion of residue helps us to describe a certain subset $\mathcal{K}$ of the set of all multipartitions, which index the simple modules for $\mathcal{H}_{r,n}$. The multipartitions in the set $\mathcal{K}$ are called \textbf{Kleshchev multipartitions} and they are a subset of $e$-restricted multipartitions (i.e., the set of multipartition with all $e$-restricted partitions as components). The recursive definition of Kleshchev multipartition can be found in \cite{Fay07}.  For the scope of this paper, it is enough to consider them as the multipartitions indexing the simple modules of $\mathcal{H}_{r,n}$.

\subsection{Specht module and simple modules}\label{specht}
The algebra $\mathcal{H}_{r,n}$ is a cellular algebra \cite{djm,gl} with the cell modules indexed by $r$-multipartitions of $n$. For each $r$-multipartition $\bm\lambda$ of $n$ we define a $\mathcal{H}_{r,n}$-module $S^{\bm\lambda}$ called a \textbf{Specht module}; these modules are the cell modules given by the cellular basis of $\mathcal{H}_{r,n}$ \cite{djm}. When $\mathcal{H}_{r,n}$ is semisimple, the Specht modules form a complete set of non-isomorphic irreducible $\mathcal{H}_{r,n}$-modules. However, we are mainly interested in the case when $\mathcal{H}_{r,n}$ is not semisimple. In this case, for each Kleshchev $r$-multipartition $\bm\lambda$, the Specht module $S^{\bm\lambda}$ has an irreducible cosocle $D^{\bm\lambda}$, and the $D^{\bm\lambda}$ provide a complete set of irreducible modules for $\mathcal{H}_{r,n}$ as $\bm\lambda$ ranges over the set of Kleshchev multipartitions of $n$ \cite[Theorem 4.2]{ari01}. If $\bm\lambda$ and $\bm\mu$ are $r$-multipartition of $n$ with $\bm\mu$ Kleshchev, let $d_{\bm\lambda \bm\mu} = [S^{\bm\lambda}:D^{\bm\mu}]$ denote the multiplicity of the simple module $D^{\bm\mu}$ as a composition factor of the Specht module $S^{\bm\lambda}$. The matrix $D = (d_{\bm\lambda \bm\mu})$ is called the \textbf{decomposition matrix} of $\mathcal{H}_{r,n}$ and determining its entries is one of the most important open problems in the representation theory of the Ariki-Koike algebras. It follows from the cellularity of $\mathcal{H}_{r,n}$ that the decomposition matrix is `triangular'; to state this we need to define the dominance order on multipartitions. Given two multipartitions $\bm{\lambda}$ and $\bm{\mu}$ of $n$, we say that $\bm{\lambda}$ \textbf{dominates} $\bm{\mu}$, and write $\bm{\lambda} \trianglerighteq \bm{\mu}$, if {
$$\sum_{a=1}^{j-1}|\lambda^{(a)}| + \sum_{b=1}^{i} \lambda_b^{(j)} \geq \sum_{a=1}^{j-1}|\mu^{(a)}| + \sum_{b=1}^{i}\mu_b^{(j)}$$
for $j=1,2, \ldots, r$ and for all $i \geq 1$}. Then we have the following.
\begin{thm}\cite{djm,gl}\label{decmtxHQ}
Suppose $\bm\lambda$ and $\bm\mu$ are $r$-multipartitions of $n$ with $\bm\mu$ Kleshchev.
\begin{enumerate}
\item If $\bm\mu=\bm\lambda$, then $[S^{\bm{\lambda}}:D^{\bm{\mu}}]=1$.
\item If $[S^{\bm{\lambda}}:D^{\bm{\mu}}]>0$, then $\bm{\lambda} \trianglerighteq \bm{\mu}$.
\end{enumerate}
\end{thm}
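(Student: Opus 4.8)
The plan is to obtain both statements as formal consequences of the theory of cellular algebras of Graham and Lehrer \cite{gl}, applied to the explicit cellular basis of $\h$ constructed by Dipper, James and Mathas \cite{djm}. The first step is to recall the cell datum: there is an $\mathbb{F}$-basis $\{m^{\bm\lambda}_{\mathfrak{s}\mathfrak{t}}\}$ of $\h$, indexed by an $r$-multipartition $\bm\lambda$ of $n$ together with a pair of standard $\bm\lambda$-tableaux $\mathfrak{s},\mathfrak{t}$, equipped with an anti-involution $*$ sending $m^{\bm\lambda}_{\mathfrak{s}\mathfrak{t}}\mapsto m^{\bm\lambda}_{\mathfrak{t}\mathfrak{s}}$ and satisfying the straightening relation
$$h\, m^{\bm\lambda}_{\mathfrak{s}\mathfrak{t}} \equiv \sum_{\mathfrak{u}} r_{\mathfrak{s}\mathfrak{u}}(h)\, m^{\bm\lambda}_{\mathfrak{u}\mathfrak{t}} \pmod{\h^{\gdom\bm\lambda}},$$
where $\h^{\gdom\bm\lambda}$ is the span of all basis elements $m^{\bm\nu}_{\mathfrak{u}\mathfrak{v}}$ with $\bm\nu \gdom \bm\lambda$, and the scalars $r_{\mathfrak{s}\mathfrak{u}}(h)$ are independent of $\mathfrak{t}$. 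The Specht module $S^{\bm\lambda}$ is the associated cell module, with basis $\{m_{\mathfrak{t}}\}$ indexed by standard $\bm\lambda$-tableaux and action read off from the $r_{\mathfrak{s}\mathfrak{u}}(h)$. The point that fixes the direction of the dominance inequality in part (2) is that the cell partial order is the \emph{reverse} of the dominance order $\gedom$ of the excerpt, since the two-sided ideal attached to $\bm\lambda$ is spanned by the strictly more dominant terms $\h^{\gdom\bm\lambda}$. Establishing the straightening relations with this comparison of multipartitions is the genuine combinatorial content, and is exactly what \cite{djm} supplies.

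With the cell datum in place, I would introduce the cellular bilinear form $\langle\,,\,\rangle_{\bm\lambda}$ on $S^{\bm\lambda}$, defined by $m^{\bm\lambda}_{\mathfrak{s}\mathfrak{t}}\, m^{\bm\lambda}_{\mathfrak{u}\mathfrak{v}} \equiv \langle m_{\mathfrak{t}}, m_{\mathfrak{u}}\rangle_{\bm\lambda}\, m^{\bm\lambda}_{\mathfrak{s}\mathfrak{v}} \pmod{\h^{\gdom\bm\lambda}}$, and observe that its radical $\rad\langle\,,\,\rangle_{\bm\lambda}$ is an $\h$-submodule of $S^{\bm\lambda}$, so that $D^{\bm\lambda}:=S^{\bm\lambda}/\rad\langle\,,\,\rangle_{\bm\lambda}$ is well defined. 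By the general theory each nonzero $D^{\bm\lambda}$ is irreducible, the nonzero ones form a complete set of pairwise non-isomorphic simple $\h$-modules, and the set of $\bm\lambda$ on which the form is nonzero is precisely the set of Kleshchev multipartitions — this last identification I would quote from \cite[Theorem 4.2]{ari01}, as in the excerpt. The main structural input from \cite[Section 3]{gl} is then that $S^{\bm\lambda}$ admits a composition series whose top factor is $D^{\bm\lambda}$ and all of whose remaining factors are of the form $D^{\bm\mu}$ with $\bm\mu$ strictly greater than $\bm\lambda$ in the cell order. Part (1) is immediate from this: when $\bm\lambda$ is Kleshchev the top factor $D^{\bm\lambda}$ occurs, and it cannot occur again since every other factor is indexed by a multipartition different from $\bm\lambda$, whence $[S^{\bm\lambda}:D^{\bm\lambda}]=1$. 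For part (2), if $[S^{\bm\lambda}:D^{\bm\mu}]>0$ then either $\bm\mu=\bm\lambda$ or $\bm\mu$ is strictly greater than $\bm\lambda$ in the cell order; translating back through the identification of the cell order with the reverse of dominance, both alternatives give $\bm\lambda \gedom \bm\mu$.

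The step I expect to be the main obstacle is not the homological machinery, which is formal once the cell datum is available, but the orientation bookkeeping. Because \cite{djm} and \cite{gl} adopt opposite conventions for which end of the poset carries the defining ideal, the care in the argument goes into checking that the cell order really is the reverse of $\gedom$ throughout, so that the inequality comes out as $\bm\lambda \gedom \bm\mu$ rather than its reverse; getting this backwards would interchange the roles of Specht and simple labels. Granting the cellular basis of \cite{djm}, with its straightening relations expressed modulo $\h^{\gdom\bm\lambda}$, everything else reduces to a direct application of the general results of \cite[Section 3]{gl}.
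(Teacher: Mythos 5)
Your proposal is correct and is exactly the argument the paper implicitly relies on: the paper states this theorem without proof, citing \cite{djm} for the Murphy-type cellular basis (with cell ideals spanned by the strictly more dominant multipartitions) and \cite{gl} for the general facts about cell modules, bilinear forms, and the triangularity of decomposition numbers with respect to the cell order. Your orientation bookkeeping is also right — with the cell order being reverse dominance, the Graham--Lehrer triangularity translates into $\bm{\lambda}\trianglerighteq\bm{\mu}$ with the simples indexed by Kleshchev (hence $e$-restricted) multipartitions, matching the statement.
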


Note that the dominance order is a partial order on the set of multipartitions. The dominance order is certainly the `correct' order to use for multipartitions, but it is sometimes useful to have a total order, $>$, on the set of multipartitions. The one we use is given as follows. 
\begin{defin}
Given two multipartitions $\bm\lambda$ and $\bm\mu$ of $n$ with $\bm\lambda \neq \bm\mu$, we write $\bm\lambda > \bm\mu$ if and only if the minimal $j \in \{1, \ldots, r\}$ for which $\lambda^{(j)} \neq \mu^{(j)}$ and the minimal $i\geq 1$ such that $\lambda^{(j)}_i \neq \mu^{(j)}_i$ satisfy $\lambda^{(j)}_i > \mu^{(j)}_i$. This is called the \textbf{lexicographic order} on multipartitions.
\end{defin}

{\subsection{Graded decomposition matrices}\label{subsec:grdec}

Recent work has given us a new line of attack. The cyclotomic quiver Hecke algebras of type $A$, known as KLR algebras (defined independently by Khovanov and Lauda and by Rouquier \cite{KL09,Rou08}), have been shown to be isomorphic to Ariki-Koike algebras by Brundan and Kleshchev in \cite{bk}. Via this isomorphism, the $\Z$-grading of the KLR algebras can be used in the setting of Ariki-Koike algebras, and thus graded Specht modules \cite{bkw11} and graded decomposition numbers can be studied.

For a graded $\mathcal{H}_{r,n}$-module $D$, let $D\langle d \rangle$ denote the graded shift (by $d$) of the module $D$ -- in other words $D\langle d \rangle_s = D_{s-d}$.
Let $\bm\la$ and $\bm\mu$ be $r$-multipartitions of $n$ with $\bm\mu$ a Kleshchev multipartition. Then the corresponding \textbf{graded decomposition number} is the Laurent polynomial
$$
d_{\bm\la\bm\mu}(v) = [S^{\bm\la}: D^{\bm\mu}]_v = \sum_{d\in\mathbb{Z}} [S^{\bm\la}: D^{\bm\mu} \langle d \rangle] v^d \in \mathbb{N}[v,v^{-1}].
$$

It is known that $d_{\bm\la\bm\la}(v) = 1$ and $d_{\bm\la\bm\mu}(v) \neq 0$ only if $\bm\la \trianglerighteq \bm\mu$ -- this follows because we can recover the ordinary decomposition numbers by setting $v=1$.

Moreover, we can recover the ordinary decomposition numbers $d_{\bm\la\bm\mu}$ from $d_{\bm\la\bm\mu}(v)$ by setting $v$ to $1$.}

\subsection{Blocks of Ariki-Koike algebras}
%The blocks of the Iwahori-Hecke algebra are classified by the so-called `Nakayama conjecture'.
It follows from the cellularity of $\mathcal{H}_{r,n}$ that each Specht module $S^{\bm\lambda}$ lies in one block of $\mathcal{H}_{r,n}$, and we abuse notation by saying that a multipartition $\bm\lambda$ lies in a block $B$ if $S^{\bm\lambda}$ lies in $B$. On the other hand, each block contains at least one Specht module, so in order to classify the blocks of $\mathcal{H}_{r,n}$, it suffices to describe the corresponding partition of the set of multipartitions. We have the following classification of the blocks of $\mathcal{H}_{r,n}$.

\begin{thm}\cite[Theorem 2.11]{lm07}\label{block_car}
Let $\bm{\mathrm{a}} \in \Z^r$ be a multicharge of $\mathcal{H}_{r,n}$. Suppose $\bm\lambda$ and $\bm\mu$ are $r$-multipartitions of $n$. Then, $S^{\bm\lambda}$ and $S^{\bm\mu}$ lie in the same block of $\mathcal{H}_{r,n}$ if and only if $\mathrm{Res}_{\bm{\mathrm{a}}}(\bm\lambda)=\mathrm{Res}_{\bm{\mathrm{a}}}(\bm\mu)$.
\end{thm}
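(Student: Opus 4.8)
The plan is to prove the two implications by quite different means: necessity of the residue-set condition by a central-character computation with Jucys--Murphy elements, and sufficiency by a branching-and-connectivity argument. Throughout I would write $I=\Z/e\Z$ and encode the residue multiset of a multipartition $\bla$ as its \emph{content} $\beta(\bla)=\sum_{\mathfrak n\in[\bla]}\alpha_{\res_{\bm{\mathrm{a}}}(\mathfrak n)}$, a vector in the free $\mathbb N$-module $\bigoplus_{i\in I}\mathbb N\alpha_i$ on formal symbols $\alpha_i$, so that $\Res_{\bm{\mathrm{a}}}(\bla)=\Res_{\bm{\mathrm{a}}}(\bmu)$ if and only if $\beta(\bla)=\beta(\bmu)$. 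The whole theorem then reads: the map sending a block to the common content of its Specht modules is a well-defined injection into contents, and the injectivity is the substantial half.

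For necessity I would introduce the Jucys--Murphy elements $L_1,\dots,L_n\in\h$, built recursively from $T_0,\dots,T_{n-1}$, which pairwise commute and whose symmetric polynomials lie in the centre $Z(\h)$. On $S^{\bla}$, with its standard basis indexed by standard tableaux, each $L_k$ acts upper-triangularly with generalised eigenvalues the $q$-contents $q^{\res_{\bm{\mathrm{a}}}(\mathfrak n)}$ of the nodes $\mathfrak n\in[\bla]$. Since each block $B$ is an indecomposable algebra, its centre $Z(B)$ is local, so any central element acts on every module in $B$ as a fixed scalar plus a nilpotent; hence the scalar part $\lambda_B\bigl(f(L_1,\dots,L_n)\bigr)$ is a block invariant for every symmetric $f$. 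Evaluating on the power sums recovers the entire multiset $\{q^{\res_{\bm{\mathrm{a}}}(\mathfrak n)}:\mathfrak n\in[\bla]\}$, and because $q$ is a primitive $e$-th root of unity this multiset determines $\Res_{\bm{\mathrm{a}}}(\bla)$. Thus $S^{\bla}$ and $S^{\bmu}$ in the same block forces $\Res_{\bm{\mathrm{a}}}(\bla)=\Res_{\bm{\mathrm{a}}}(\bmu)$.

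For sufficiency I would induct on $n$, using the $i$-restriction and $i$-induction functors $\mathscr E_i,\mathscr F_i$ between $\mathcal H_{r,n-1}$ and $\h$ --- the projections of ordinary restriction and induction onto the generalised eigenspace for the value $q^{i}$ of the top Jucys--Murphy element $L_n$ --- together with the branching rule: $\mathscr E_i S^{\bla}$ has a Specht filtration with one factor $S^{\bla^{-}}$ for each removable node of residue $i$, and dually for $\mathscr F_i$. The device for linking two Specht modules is a \emph{shared composition factor}: if a simple $D^{\bm{\kappa}}$ occurs in both $S^{\bla}$ and $S^{\bmu}$, then both lie in the block of $D^{\bm{\kappa}}$. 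Concretely, if $\bla$ and $\bmu$ differ only by the position of a single node of residue $i$, say $\bla=\bm{\nu}\sqcup\{A\}$ and $\bmu=\bm{\nu}\sqcup\{B\}$ for a common multipartition $\bm{\nu}$ of $n-1$ and addable $i$-nodes $A,B$ of $\bm{\nu}$, then both $S^{\bla}$ and $S^{\bmu}$ occur in the Specht filtration of $\mathscr F_i S^{\bm{\nu}}$; a closer analysis of this module, using the biadjunction of $\mathscr E_i$ and $\mathscr F_i$ together with the branching rule for $\mathscr E_i$, produces a composition factor common to $S^{\bla}$ and $S^{\bmu}$ and so places them in one block. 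I would finish with the purely combinatorial claim that any two multipartitions of the same content are joined by a chain of such single-node moves, each preserving the content; combined with the linking step, this collects all multipartitions of a fixed content into a single block.

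The main obstacle is exactly the single-node linking step. The tempting shortcut --- that $S^{\bla}$ and $S^{\bmu}$ are both subquotients of $\mathscr F_i S^{\bm{\nu}}$ and hence in the same block --- is \emph{false} in general, since $\mathscr F_i S^{\bm{\nu}}$ may a priori spread across several blocks of the same content, and lying in one module does not force two modules into one block. One must genuinely exhibit a shared composition factor, which is where real representation theory (the structure of $i$-induction of irreducibles, via Grojnowski's or Brundan--Kleshchev's analysis, or an explicit homomorphism of Specht modules) must be invoked; the content bookkeeping of the first two paragraphs is necessary but not sufficient. Circularity is the subtle point here, and I would guard against it by taking as inductive hypothesis the full block classification for all $m<n$, so that the block of $S^{\bm{\nu}}$ is already pinned down before $\mathscr F_i$ is applied.
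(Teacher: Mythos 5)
The paper does not prove this statement at all: it is imported verbatim as \cite[Theorem 2.11]{lm07}, so there is no internal proof to compare against. Judged on its own terms, your proposal establishes only the easy implication. The central-character argument with Jucys--Murphy elements for necessity is standard and essentially sound, with one repair needed: in positive characteristic the power sums of the eigenvalues $q^{\res(\mathfrak n)}$ need not determine the multiset (e.g.\ $p$ equal eigenvalues contribute $0$ to every power sum over $\mathbb F_p$), so you should work with the elementary symmetric polynomials in $L_1,\dots,L_n$, whose block scalars are the coefficients of $\prod_{\mathfrak n}(t-q^{\res(\mathfrak n)})$ and do recover the multiset.

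The sufficiency direction, which is the entire content of the theorem, is left open at exactly the point you flag. Your plan reduces it to: (i) any two multipartitions with the same content are joined by a chain of single-node moves through a common $\bm\nu$ of size $n-1$, and (ii) for each such move, $S^{\bla}$ and $S^{\bmu}$ share a composition factor. Step (ii) is asserted via ``a closer analysis of $\mathscr F_i S^{\bm\nu}$'' but never carried out, and it cannot be waved through: whether the two Specht factors of $\mathscr F_i S^{\bm\nu}$ are linked is precisely the block statement being proved, so invoking Grojnowski or Brundan--Kleshchev here risks circularity unless you isolate a specific input (their results are themselves built on, or equivalent to, a block classification). Step (i) is also a nontrivial combinatorial lemma for general $r$ that you state without proof. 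For the record, the actual proof in Lyle--Mathas takes a different route: they first classify the blocks of the cyclotomic $q$-Schur algebra using the Jantzen sum formula (the Jantzen coefficients supply the linkage that your shared-composition-factor step is missing) and then descend to the Ariki--Koike algebra via the Schur functor. As it stands your write-up is an outline with the decisive step identified but not closed.
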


\begin{exe}
Continuing Example \ref{exsameblock}, we see that the residue sets of $\bm\lambda$ and $\bm\mu$ are equal. {Hence $\bm\lambda$ and $\bm\mu$ lie in the same block of $\mathcal{H}_{3,7}(q,\bm Q)$ where $e=4$ and $\bm Q = (q^1,q^0, q^2) = (q, 1, q^2).$}
\end{exe}

\subsection{Induction and restriction}
If $n>1$, then $\mathcal{H}_{r,n-1}$ is naturally a subalgebra of $\mathcal{H}_{r,n}$, and in fact $\mathcal{H}_{r,n}$ is free as an $\mathcal{H}_{r,n-1}$-module. So there are well-behaved induction and restriction functors between the module categories of $\mathcal{H}_{r,n-1}$ and $\mathcal{H}_{r,n}$. Given modules $M$, $N$ for $\mathcal{H}_{r,n-1}$ and $\mathcal{H}_{r,n}$, respectively, we write $M\uparrow^{\mathcal{H}_{r,n}}$ and $N\downarrow_{\mathcal{H}_{r,n-1}}$ for the induced and restricted modules. If $B$ and $C$ are blocks of $\mathcal{H}_{r,n-1}$ and $\mathcal{H}_{r,n}$, respectively, then we may write $M\uparrow^{C}$ and $N\downarrow_{B}$ for the projections of the induced and restricted modules onto $B$ and $C$.

%From the definition of the Specht modules, the action of $\mathcal{H}_{r,n-1}$ on $S^{\bm \lambda}\downarrow_{\mathcal{H}_{r,n-1}}$, for $\bm \lambda$ multipartition of $n$, is given by ignoring the node in the tableaux with label $n$. With only a small amount of work this implies the following result.

{Once we have a grading for the Specht module, it is natural to consider the graded branching rule for them. In order to state this result we need to introduce the following notation. We impose a partial order $>$ on the set of nodes of residue $i \in I$ of a multipartition by saying that $(b,c,j)$ is \textbf{above} $(b',c',j')$ (or $(b',c',j')$ is \textbf{below} $(b,c,j)$) if either $j<j'$ or ($j=j'$ and $b<b'$). In this case we write $(b,c,j)>(b',c',j')$. %Note this order restricts to a total order on the set of all addable and removable nodes of residue $i \in I$ of a multipartition.

Let $\bm\la$ be an $r$-multipartition of $n$, $i\in I$, $A$ be a removable $i$-node of 
$\bm\la$ and $B$ be an addable $i$-node of $\bm\la$. We set
\begin{equation}\label{EDMUA}
\begin{split}
N_{A}(\bm\la):= &\#\{\text{addable $i$-nodes of $\bm\la$ below $A$}\}
\\
&-\#\{\text{removable $i$-nodes of $\bm\la$ below  $A$}\};
\end{split}
\end{equation}
\begin{equation}\label{EDMUB}
\begin{split}
N^B(\bm\la):=&\#\{\text{addable $i$-nodes of $\bm\la$ above $B$}\}\\
&-\#\{\text{removable $i$-nodes of $\bm\la$ above $B$}\}.
\end{split}
\end{equation} 

\begin{thm}\cite[Main Theorem]{hm11}\cite[Theorem 4.11]{bkw11}\label{gradedbranching}
\begin{itemize}
\item Suppose $\bm \lambda$ is a multipartition of $n-1$, and let $B_1 > \ldots > B_s$ be all the addable nodes of $[\bm \lambda]$. For each $i = 1, \ldots, s$, let $\bm\lambda^{B_i}$ be the multipartition of $n$ with $[\bm\lambda^{B_i}] = [\bm\lambda] \cup \{B_i\}$. Then $S^{\bm\lambda}\uparrow^{\mathcal{H}_{r,n}}$ has a filtration
$$S^{\bm\lambda}\uparrow^{\mathcal{H}_{r,n}}=V_s \supset \dots \supset V_1 \supset V_0 = \{0\}$$
as a graded $\mathcal{H}_{r,n}$-module in which the factors are 
$$ V_i / V_{i-1} = S^{\bm\lambda^{B_i}}\langle N^{{B}_i}(\bm\la)\rangle$$
for all $1 \leq i \leq s$.

\item Suppose $\bm\lambda$ is a multipartition of $n$, and let $A_1 < \ldots < A_t$ be all the removable nodes of $[\bm\lambda]$. For each $i = 1, \ldots, t$, let $\bm\lambda_{A_i}$ be the multipartition of $n-1$ with $[\bm\lambda_{A_i}] = [\bm\lambda] \setminus \{A_i\}$. Then $S^{\bm\lambda}\downarrow_{\mathcal{H}_{r,n-1}}$ has a filtration
$$S^{\bm\lambda}\downarrow_{\mathcal{H}_{r,n-1}}=V_t \supset \dots \supset V_1 \supset V_0 = \{0\}$$
as a graded $\mathcal{H}_{r,n}$-module in which the factors are 
$$ V_i / V_{i-1} = S^{\bm\lambda_{A_i}}\langle N_{{A}_i}(\bm\la)\rangle$$
for all $1 \leq i \leq t$.
\end{itemize}
\end{thm}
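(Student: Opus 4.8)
The plan is to prove the restriction statement directly from the graded cellular basis of the Specht module, and then to deduce the induction statement by a dual (mirror-image) argument together with graded adjunction. Throughout I would work inside the cyclotomic quiver Hecke (KLR) presentation of $\mathcal{H}_{r,n}$ furnished by the Brundan--Kleshchev isomorphism \cite{bk}, since it is the grading that carries all the new information. Recall that the graded Specht module $S^{\bm\lambda}$ has a homogeneous basis $\{v_{\mathfrak{t}}\}$ indexed by the standard $\bm\lambda$-tableaux $\mathfrak{t}$, with $\deg v_{\mathfrak{t}} = \deg(\mathfrak{t})$ \cite{hm11}, where the degree of a tableau is defined recursively: if $A = \mathfrak{t}^{-1}(n)$ is the node occupied by the entry $n$, then $\deg(\mathfrak{t}) = N_A(\bm\lambda) + \deg(\mathfrak{t}{\downarrow}_{n-1})$, with $\mathfrak{t}{\downarrow}_{n-1}$ the tableau obtained by deleting that entry.

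First I would partition the standard tableaux according to the node containing $n$. Each standard $\bm\lambda$-tableau $\mathfrak{t}$ has $n$ in some removable node $A_i$, and $\mathfrak{t}{\downarrow}_{n-1}$ is then a standard tableau of shape $\bm\lambda_{A_i}$; conversely every standard $\bm\lambda_{A_i}$-tableau arises this way. Ordering the removable nodes as $A_1 < \cdots < A_t$ (using the order on nodes fixed in the excerpt), I would set $V_i$ to be the span of those $v_{\mathfrak{t}}$ for which $n$ lies in one of $A_1, \ldots, A_i$. The crux is to show that each $V_i$ is an $\mathcal{H}_{r,n-1}$-submodule of $S^{\bm\lambda}{\downarrow}_{\mathcal{H}_{r,n-1}}$. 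This reduces to checking that the generators $e(\mathbf{i})$, $y_k$ and $\psi_k$ of $\mathcal{H}_{r,n-1}$ (those involving only the first $n-1$ strands) act on $v_{\mathfrak{t}}$ by a combination of basis elements $v_{\mathfrak{s}}$ in which $n$ sits in a node no higher than in $\mathfrak{t}$; this is exactly the triangularity of the graded cellular basis with respect to the dominance order, specialised to the standard tableaux sharing the position of $n$.

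Having established the filtration, I would identify $V_i / V_{i-1}$ with $S^{\bm\lambda_{A_i}}\langle N_{A_i}(\bm\lambda)\rangle$ via $v_{\mathfrak{t}} \mapsto v_{\mathfrak{t}{\downarrow}_{n-1}}$. This is a bijection between the respective homogeneous bases, and it intertwines the $\mathcal{H}_{r,n-1}$-actions because the KLR relations for the first $n-1$ strands are blind to the top strand. The grading shift is then forced by the recursive degree formula: $\deg(\mathfrak{t}) = N_{A_i}(\bm\lambda) + \deg(\mathfrak{t}{\downarrow}_{n-1})$ says precisely that $v_{\mathfrak{t}}$ sits $N_{A_i}(\bm\lambda)$ degrees above its image, giving the shift $\langle N_{A_i}(\bm\lambda)\rangle$. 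For the induction half, the dual degree function built from $N^B$ plays the analogous role; I would either run the mirror-image argument on the addable nodes ordered $B_1 > \cdots > B_s$, or invoke graded biadjunction of $\uparrow$ and $\downarrow$ together with the fact that the graded dual of a Specht module is again a shift of a Specht module, converting the restriction filtration into the required induction filtration.

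The main obstacle I anticipate is the submodule claim in the second paragraph, namely that the spans $V_i$ are genuinely closed under the $\mathcal{H}_{r,n-1}$-action. In the ungraded setting this is the classical branching rule, but here one must verify that the action is not merely filtered but filtered with the correct homogeneous degrees, so the argument has to track the degree contributions of the $\psi_k$ and $y_k$ and confirm they never disturb the prescribed grading shift. Everything else — the bijection of bases and the degree bookkeeping — is essentially formal once the degree function is in place, so the real work is confined to this compatibility of the cellular filtration with restriction at the graded level.
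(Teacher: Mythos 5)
This theorem is not proved in the paper at all: it is imported verbatim, with citations to Hu--Mathas \cite{hm11} for the induction statement and Brundan--Kleshchev--Wang \cite{bkw11} for the restriction statement, so there is no in-paper proof to compare against. Measured against those sources, the restriction half of your sketch is essentially the right argument and matches \cite{bkw11}: filter the homogeneous standard basis $\{v_{\mathfrak t}\}$ by the position of the entry $n$, check triangularity of the $\mathcal{H}_{r,n-1}$-action to get submodules, and read off the shift $N_{A_i}(\bm\lambda)$ from the recursive degree formula $\deg(\mathfrak t)=N_A(\bm\lambda)+\deg(\mathfrak t{\downarrow}_{n-1})$. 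The one step you correctly flag as the crux (closure of the spans $V_i$ under the action) is indeed where the work lies, and it is handled in \cite{bkw11} via the straightening rules for the $\psi$-basis; as a sketch this half is sound.

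The genuine gap is in the induction half, which you dispatch in one sentence. Neither of your two proposed shortcuts works. The ``mirror-image argument'' has no basis to run on: $S^{\bm\lambda}\uparrow^{\mathcal{H}_{r,n}}$ does not come equipped with a homogeneous basis indexed by standard tableaux of the shapes $\bm\lambda^{B_i}$ --- constructing such a basis and proving it spans is precisely the content of the Main Theorem of \cite{hm11} (and already in the ungraded case this required a separate paper, \cite{Mat09}; it does not follow from the restriction rule by symmetry). The adjunction route also fails as stated: induction and restriction are only biadjoint up to degree shifts, adjunction gives information about $\operatorname{Hom}$-spaces rather than producing a Specht filtration of the induced module, and the graded dual of a Specht module for $\mathcal{H}_{r,n}$ is a \emph{dual} Specht module (a shift of a Specht module only after passing to conjugate multipartitions and inverted parameters), so dualising the restriction filtration does not land you back among the $S^{\bm\lambda^{B_i}}$ for the same algebra. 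So the induction statement needs its own argument, and your proposal does not supply one.
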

If a module $M$ has a filtration $M=V_t \supseteq V_{t-1} \supseteq \ldots \supseteq V_{1} \supseteq V_0 = 0$ with $V_i/V_{i-1} \cong M_i\langle d_i\rangle$ for $i = 1, \ldots, t$, and $d_i\in \Z$, we write $$M \sim \sum_{i=1}^{t} M_i \langle d_i\rangle \text{ or } M \sim\sum_{i=1}^{t}v^{d_i} M_i, \qquad \text{for }d_i\in \Z.$$ }

%\begin{thm} \cite[Lemma 2.1]{ari}; \cite[Corollary 3.7]{Mat09}\label{Arikithm}
%\begin{itemize}
%\item Suppose $\bm \lambda$ is a multipartition of $n-1$, and let $\mathfrak{n}_1, \ldots, \mathfrak{n}_s$ be the addable nodes of $[\bm \lambda]$. For each $i = 1, \ldots, s$, let $\bm\lambda^{+i}$ be the multipartition of $n$ with $[\bm\lambda^{+i}] = [\bm\lambda] \cup \{\mathfrak{n}_i\}$. Then $S^{\bm\lambda}\uparrow^{\mathcal{H}_{r,n}}$ has a filtration in which the factors are $S^{\bm\lambda^{+1}}, \ldots, S^{\bm\lambda^{+s}}$.
%\item Suppose $\bm\lambda$ is a multipartition of $n$, and let $\mathfrak{n}_1, \ldots, \mathfrak{n}_t$ be the removable nodes of $[\bm\lambda]$. For each $i = 1, \ldots, t$, let $\bm\lambda^{-i}$ be the multipartition of $n-1$ with $[\bm\lambda^{-i}] = [\bm\lambda] \setminus \{\mathfrak{n}_i\}$. Then $S^{\bm\lambda}\downarrow_{\mathcal{H}_{r,n-1}}$ has a filtration in which the factors are $S^{\bm\lambda^{-1}}, \ldots, S^{\bm\lambda^{-t}}$.
%\end{itemize}
%\end{thm}
%If a module $M$ has a filtration $M=M_0 \supseteq M_{1} \supseteq \ldots \supseteq M_{t-1} \supseteq M_t = 0$ with $M_i/M_{i+1} \cong N_i$ for $i = 0, \ldots, t - 1$, we will write $$M \sim \sum_{i=0}^{t-1} N_i.$$     

\subsection{$\beta$-numbers and the abacus}\label{betaAbacus}
Given the assumption that the cyclotomic parameters of $\mathcal{H}_{r,n}$ are all powers of $q$, we may conveniently represent multipartitions on an abacus display. Fix $\bm{\mathrm{a}}=(a_1, \ldots, a_r)\in \mathbb{Z}^{r}$
to be a multicharge of $\mathcal{H}_{r,n}$.
\begin{defin}
Let $\bm{\lambda}=(\lambda^{(1)}, \ldots, \lambda^{(r)})$ be a multipartition of $n$. For every $i\geq1$ and for every $j \in \{1, \ldots, r\}$, we define the \textbf{$\beta$-number} $\beta_i^j$ to be
$$\beta_i^j := \lambda_i^{(j)} + a_j - i.$$
\end{defin}
The set $B_{a_j}^j =\{ \beta_1^j, \beta_2^j, \ldots \}$ is the set of $\beta$-numbers (defined using the integer $a_j$) of partition $\lambda^{(j)}$. It is easy to see that any set $B_{a_j}^j =\{ \beta_1^j, \beta_2^j, \ldots \}$ is a set containing exactly $a_j + N$ integers greater than or equal to $-N$, for sufficiently large $N$. 

For each set $B_{a_j}^j$ we can define an abacus display in the following way. We take an abacus with $e$ infinite vertical runners, which we label $0, 1, \ldots, e-1$ from left to right, %(or $\ldots,-1, 0, 1, \ldots$ from left to right, if $e = \infty$)
and we mark positions on runner $l$ and label them with the integers congruent to $l$ modulo $e$, so that %(if $e < \infty$) then 
position $(x+1)e+l$ lies immediately below position $xe + l$, for each $x$. Now we place a bead at position $\beta_i^j$, for each $i$. The resulting configuration is called the $e$-abacus display, or the $e$-abacus configuration, for $\lambda^{(j)}$ with respect to $a_j$. Moreover, we say that the bead corresponding to the $\beta$-number $xe+l$ is at \textbf{level} $x$ for $x \in \mathbb{Z}$. Then, we define $\ell^{\bm{\mathrm{a}}}_{ij}(\bm{\lambda})$ to be the level of the lowest bead on runner $i$ of the abacus display for $\lambda^{(j)}$ with respect to $\bm{\mathrm{a}}$. Note that this bead is the largest element of $B_{a_j}^j$ congruent to $i$ modulo $e$.

Hence, we can now define the $e$-\textbf{abacus display}, or the $e$-\textbf{abacus configuration}, for a multipartition $\bm{\lambda}$ with respect to $\bm{\mathrm{a}}$ to be the $r$-tuple of $e$-abacus displays associated to each component $\lambda^{(j)}$. If it is clear which $e$ we are referring to, we simply say abacus configuration. When we draw abacus configurations we will draw only a finite part of the runners and we will assume that above this point the runners are full of beads and below this point there are no beads. %In our examples, we will draw a line between the strictly negative and the positive positions. From now on, we will identify $\beta$-sets with abacus configurations.

\begin{exe}
Suppose that $r = 3$, $\bm{\mathrm{a}} = (-1, 0, 1)$ and $\bm{\lambda}= ((1), \varnothing, (1^2))$. Then we have
\begin{align*}
B^1_{-1} &= \{-1,-3,-4,-5, \ldots\};\\
B^2_0 &= \{-1,-2,-3,-4\ldots\};\\
B^3_1 &= \{1,0,-2,-3,-4,\ldots\}.
\end{align*}
So, the $e$-abacus display with respect to the multicharge $\bm{\mathrm{a}}$ for $\bm{\lambda}$ when $e = 4$ is
%\todo{find a way to draw the horizontal line in the abacus}
\begin{center}
\begin{tabular}{c|c|c}
$      
        \begin{matrix}

        0 & 1 & 2 & 3 \\

        \bigvd & \bigvd & \bigvd & \bigvd \\

        \bigbd & \bigbd & \bigbd & \bigbd \\
        
        \bigbd & \bigbd & \bignb & \bigbd \\
     
        \bignb & \bignb & \bignb & \bignb \\
        
        \bignb & \bignb & \bignb & \bignb \\

        \bigvd & \bigvd & \bigvd & \bigvd \\

        \end{matrix}
$
&
$     \begin{matrix}

        0 & 1 & 2 & 3 \\

        \bigvd & \bigvd & \bigvd & \bigvd \\

        \bigbd & \bigbd & \bigbd & \bigbd \\

        \bigbd & \bigbd & \bigbd & \bigbd \\

        \bignb & \bignb & \bignb & \bignb \\
        
        \bignb & \bignb & \bignb & \bignb \\

        \bigvd & \bigvd & \bigvd & \bigvd \\

        \end{matrix}
$
&
$     \begin{matrix}

        0 & 1 & 2 & 3 & & \text{level}\\

        \bigvd & \bigvd & \bigvd & \bigvd & &\\

        \bigbd & \bigbd & \bigbd & \bigbd & & -2\\

        \bigbd & \bigbd & \bigbd & \bignb & & -1\\

        \bigbd & \bigbd & \bignb & \bignb & & 0\\
        
        \bignb & \bignb & \bignb & \bignb & & \\

        \bigvd & \bigvd & \bigvd & \bigvd & & \\

        \end{matrix}
.$\\
\end{tabular}
\end{center}
\end{exe}

\subsection{Rim $e$-hooks and $e$-core}
We recall some other definitions about the diagram of a partition since their generalization for multipartitions will be really useful in this paper. 

\begin{defin}
Suppose $\lambda$ is a partition of $n$ and $(i,j)$ is a node of $[\lambda]$.
\begin{enumerate}
\item The \textbf{rim} of $[\lambda]$ is defined to be the set of nodes
$$\{(i,j) \in [\lambda] \text{ }|\text{ }(i+1,j+1) \notin [\lambda]\}.$$
\item Define an \textbf{$e$-rim hook} to be a connected subset $R$ of the rim containing exactly $e$ nodes such that $[\lambda]  \setminus R$ is the diagram of a partition.
\item  If $\lambda$ has no $e$-rim hooks then we say that $\lambda$ is an \textbf{$e$-core}.
\item If we can remove $w$ $e$-rim hooks from $[\lambda]$ to produce an $e$-core, then we say that $\lambda$ has \textbf{$e$-weight} $w$. In particular, an $e$-core has weight 0.
\end{enumerate}
\end{defin}

%\begin{exe}
%Let $e=4$. Consider the partition $\lambda=(3,2)$. Then the residue diagram is
%$$\young(0!\ylw12,20).$$
%We can remove a $4$-rim hook, that is the shaded one in the diagram. So, $\lambda$ has $4$-weight 1 and its $4$-core is $(1)$.
%\end{exe}

An abacus display for a partition is useful for visualising the removal of $e$-rim hooks. If we are given an abacus display for $\lambda$ with $\beta$-numbers in a set $B$, then $[\lambda]$ has a $e$-rim hook if and only if there is a $\beta$-number $\beta_i \in B$ such that $\beta_i-e \notin B$. Furthermore, removing a $e$-rim hook corresponds to reducing such a $\beta$-number by $e$. On the abacus, this corresponds to sliding a bead up one position on its runner. So, $\lambda$ is an $e$-core if and only if every bead in the abacus display has a bead immediately above it. Using this, we can see that the definition of $e$-weight and $e$-core of $\lambda$ are well defined.
%\begin{exe}
%Let $e=4$. Consider the partition $\lambda=(3,2)$ and $a=2$. Hence, looking at the following abacus display
%$$\begin{matrix}
%           0 & 1 & 2 & 3 \\
%        \bigvd & \bigvd & \bigvd & \bigvd \\
%        \bigbd & \bigbd & \bigbd & \bigbd \\
%        \bigbd & \bigbd & \bigbd & \bigbd \\
%        \bignb & \bignb & \bigbd & \bignb \\
%        \bigbd & \bignb & \bignb & \bignb \\        
%        \bignb & \bignb & \bignb & \bignb \\
%        \bigvd & \bigvd & \bigvd & \bigvd \\
%
%        \end{matrix},$$
%one can see that the only bead that can be slided up is the last one in runner 0. So, we can deduce that the partition $\lambda$ has $4$-weight 1 and its $4$-core  is the partition corresponding to the abacus configuration with all the beads as high as possible that is (1).
%\end{exe}
{Finally, we can also notice that each bead corresponds to the end of a row of the diagram of $\lambda$ (or to a row of length $0$).

For multipartitions, by the definition of the $\beta$-numbers the node at the end of the row (if it exists) has residue $i$ if and only if the corresponding bead is on runner $i$ of the abacus for $i \in I$.
Thus, if we increase any $\beta$-number by one, this is equivalent to moving a bead from runner $i$ to runner $i + 1$ which is equivalent to adding a node of residue $i + 1$ to the diagram of a component $\lambda^{(j)}$. Similarly, decreasing a $\beta$-number by one is equivalent to moving a bead from runner $i$ to runner $i - 1$ which is equivalent to removing a node of residue $i$ from the diagram of $\lambda^{(j)}$.}

\section{Background results about weight of multipartitions and core block}\label{Faybackground}
Here we summarise the main definitions and some results from \cite{Fay06} and \cite{Fay07}, mostly concerning weight of multipartitions and core blocks of Ariki-Koike algebras. These are the generalisations to multipartitions of the notions of $e$-weight and $e$-core of partitions. 

\subsection{Weight and hub of multipartitions}
In \cite{Fay06}, Fayers generalises the notion of weight from partitions to multipartitions which will be of great utility in this paper. Fix a multicharge $\bm{\mathrm{a}}=(a_1, \ldots, a_r)$ of $\mathcal{H}_{r,n}$.

\begin{defin}\label{mult_weight}
Let  $\bm{\lambda}= (\lambda^{(1)}, \ldots, \lambda^{(r)})$ be a multipartition of $n$. Let $c_i(\bm{\lambda})$ denote the number of nodes in $[\bm{\lambda}]$ of residue $i \in I$ and $\overline{a_j}$ denote $a_j\text{ }(\text{mod }e)$.
Define the \textbf{weight} $w(\bm{\lambda})$ of $\bm{\lambda}$ to be the non-negative integer
$$
w(\bm{\lambda}) = \left( \sum\limits_{j=1}^r c_{\overline{a_j}}(\bm{\lambda})\right) - \frac{1}{2} \sum_{i \in I} (c_{i}(\bm{\lambda})-c_{i+1}(\bm{\lambda}))^2.
$$
\end{defin}

It is also useful to define the hub of a multipartition $\bm{\lambda}$. For each $i\in I$ and $j \in \{1, \ldots, r\}$, define 
\begin{align*}
\delta_i^j(\bm{\lambda}) = & (\text{the number of removable nodes of $[\lambda^{(j)}]$ of residue $i$})\\
 &- (\text{the number of addable nodes of $[\lambda^{(j)}]$ of residue $i$}),
\end{align*}
and put $\delta_i(\bm{\lambda}) = \sum_{j=1}^r \delta_i^j(\bm{\lambda})$. The collection $(\delta_i(\bm{\lambda}) \text{ }|\text{ }i \in I)$ of integers is called the \textbf{hub} of $\bm{\lambda}$.

Moreover, we want to highlight an important feature of the weight and hub of a multipartition: they are invariants of the block containing $\bm{\lambda}$, and in fact determine the block.

\begin{prop}\cite[Proposition 3.2 \& Lemma 3.3]{Fay06}\label{hubblock}
Suppose $\bm{\lambda}$ is a multipartition of $n$ and $\bm{\mu}$ is a multipartition of $m$. Then
\begin{enumerate}
\item if $\bm{\lambda}$ and $\bm{\mu}$ have the same hub, then $m \equiv n\text{ }(\mathrm{mod}\text{ }e)$, and
\begin{equation*}
w(\bm{\lambda}) - w(\bm{\mu}) = \dfrac{r(n-m)}{e};
\end{equation*}
\item if $n = m$, then $\bm{\lambda}$ and $\bm{\mu}$ lie in the same block of $\mathcal{H}_{r,n}$ if and only if they have the same hub.
\end{enumerate}
\end{prop}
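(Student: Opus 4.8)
The plan is to reduce everything to a single formula expressing the hub of a multipartition in terms of its content vector $(c_i(\bla))_{i\in I}$, where residues are read cyclically in $I=\Z/e\Z$. Concretely, I would first prove the content--hub identity
\begin{equation}
\delta_i(\bla) \;=\; -\#\{\, j : \overline{a_j} = i \,\} + 2c_i(\bla) - c_{i-1}(\bla) - c_{i+1}(\bla),
\qquad i \in I,
\tag{$\ast$}
\end{equation}
with the subscripts $i\pm 1$ taken mod $e$. The constant term is $\delta_i(\bm\varnothing) = -\#\{j:\overline{a_j}=i\}$, which depends only on the multicharge: for $\bm\varnothing$ each component contributes exactly one addable node, of residue $\overline{a_j}$, and no removable node. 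I would establish $(\ast)$ by induction on $n=|\bla|$, building $\bla$ from $\bm\varnothing$ one node at a time. The base case is the computation of $\delta_i(\bm\varnothing)$ just described. For the inductive step I claim that adding a single node of residue $d$ raises $c_d$ by $1$ and changes the hub by the vector $2e_d-e_{d-1}-e_{d+1}$ (a discrete cyclic Laplacian), \emph{independently} of the component and of where the node is added.

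This inductive step is the main obstacle, and I would handle it on the abacus using the dictionary of Section~\ref{betaAbacus}. Writing $\chi_j(p)$ for the bead-indicator of component $j$, a removable $i$-node is a position $p\equiv i$ with a bead at $p$ and a gap at $p-1$, while an addable $i$-node is a gap at $p$ with a bead at $p-1$; hence one obtains the telescoping identity $\delta_i^j(\bla)=\sum_{p\equiv i}(\chi_j(p)-\chi_j(p-1))$. Adding a $d$-node amounts to sliding one bead from runner $d-1$ to runner $d$, say at position $q\equiv d$. From the telescoping identity the effect is purely local: the summand $p=q$ of $\delta_d^j$ jumps from $-1$ to $+1$, the summand $p=q+1$ of $\delta_{d+1}^j$ drops by $1$, the summand $p=q-1$ of $\delta_{d-1}^j$ drops by $1$, and every other summand is untouched. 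This yields exactly the change $2e_d-e_{d-1}-e_{d+1}$ and shows it is independent of the chosen node; summing the contributions of the $c_d(\bla)$ nodes of each residue $d$ then gives $(\ast)$.

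Granting $(\ast)$, both parts follow by linear algebra over $\Z/e\Z$. For part~(1), equality of hubs forces $2c_i(\bla)-c_{i-1}(\bla)-c_{i+1}(\bla)=2c_i(\bmu)-c_{i-1}(\bmu)-c_{i+1}(\bmu)$ for all $i$, so the content difference lies in the kernel of the cyclic Laplacian, which is spanned by the all-ones vector; hence there is a constant $t$ with $c_i(\bla)-c_i(\bmu)=t$ for every $i\in I$. Summing over the $e$ residues gives $n-m=et$, so $m\equiv n\pmod e$ and $t=(n-m)/e$. Feeding this into the weight formula, the first sum $\sum_{j=1}^r c_{\overline{a_j}}$ contributes $rt$, while the second sum is unchanged because the consecutive differences $c_i-c_{i+1}$ are insensitive to adding the constant $t$ to every $c_i$; therefore $w(\bla)-w(\bmu)=rt=r(n-m)/e$, as required.

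For part~(2), suppose $n=m$. By Theorem~\ref{block_car}, $\bla$ and $\bmu$ lie in the same block if and only if $\Res_{\bm{\mathrm{a}}}(\bla)=\Res_{\bm{\mathrm{a}}}(\bmu)$, equivalently $c_i(\bla)=c_i(\bmu)$ for all $i$. If this holds, then $(\ast)$ immediately gives equal hubs. Conversely, equal hubs yield (as in part~(1)) $c_i(\bla)-c_i(\bmu)=t$ for a constant $t$, and summing over $I$ gives $et=n-m=0$, so $t=0$ and the content vectors coincide; thus the two multipartitions share a block. The only genuinely combinatorial input is the locality of the hub-change in the inductive step, which the telescoping abacus identity makes transparent.
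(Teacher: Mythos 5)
Your argument is correct. Note that the paper does not prove Proposition~\ref{hubblock} at all --- it is quoted from \cite{Fay06} (Proposition 3.2 and Lemma 3.3) --- so there is no internal proof to compare against; but your content--hub identity $(\ast)$ is exactly the substance of Fayers' argument, namely that the hub is an affine function of the residue-content vector whose linear part is the negative of the cyclic (affine Cartan) Laplacian, so that equal hubs force the content vectors to differ by a constant multiple of the all-ones vector (the null root), from which both the congruence $n\equiv m \pmod e$ and the weight formula drop out since the quadratic part of $w$ depends only on consecutive differences $c_i-c_{i+1}$. Your abacus verification of the local change $2e_d-e_{d-1}-e_{d+1}$ via the telescoping identity is sound (including the degenerate case $e=2$, where the two $-1$ contributions land on the same runner), so this is a valid self-contained replacement for the citation rather than a genuinely different route.
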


In view of this result, we may define the \textbf{hub of a block} $B$ to be the hub of any multipartition $\bm{\lambda}$ in $B$, and we write $\delta_i(B) = \delta_i(\bm{\lambda})$.

%\subsection{Computing the weight from the abacus}
Now, we summarise some results from \cite{Fay06} that shows how to compute weights of multipartitions efficiently from an abacus
display.

\begin{prop}\cite[Corollary 3.4]{Fay06}\label{mult_remhook}
Suppose $\bm{\lambda}= (\lambda^{(1)}, \ldots, \lambda^{(r)})$ is a multipartition, and that $\bm{\lambda}^-$ is the multipartition obtained from $\bm{\lambda}$ by removing an $e$-rim hook from some $\lambda^{(j)}$. Then $w(\bm{\lambda}) = w(\bm{\lambda^-}) + r$.
\end{prop}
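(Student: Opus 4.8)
The plan is to reduce the statement to a single combinatorial fact about the residues occurring in an $e$-rim hook, and then to read the weight change directly off the defining formula
$$
w(\bm{\lambda}) = \left( \sum_{j=1}^r c_{\overline{a_j}}(\bm{\lambda})\right) - \frac{1}{2} \sum_{i \in I} (c_{i}(\bm{\lambda})-c_{i+1}(\bm{\lambda}))^2 .
$$
The whole proof turns on understanding how the residue counts $c_i(\bm{\lambda})$ change when one $e$-rim hook is deleted, so I would isolate and prove that change first.

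The key fact, and the step I expect to be the only nontrivial input, is that an $e$-rim hook of a component $\lambda^{(j)}$ contains \emph{exactly one} node of each residue $0,1,\ldots,e-1$. To see this I would use the abacus description recalled in Section~\ref{betaAbacus}: removing an $e$-rim hook from $\lambda^{(j)}$ corresponds to sliding a single bead on the abacus for $\lambda^{(j)}$ from some position $\beta$ up to the (empty) position $\beta-e$ on the same runner. The $e$ nodes of such a rim hook occupy $e$ consecutive diagonals, i.e. their contents $c-b$ run through $e$ consecutive integers (concretely $\beta-a_j-e+1,\ldots,\beta-a_j$). Since the residue of $(b,c,j)$ is $a_j+(c-b)\pmod e$, these residues run through $e$ consecutive integers modulo $e$, and hence form a complete residue system, each value arising exactly once. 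I would justify the "consecutive diagonals" claim carefully, as it is the load-bearing point; everything after it is bookkeeping.

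Granting this, write $c_i = c_i(\bm{\lambda})$ and $c_i^- = c_i(\bm{\lambda}^-)$. Because the deleted hook meets each residue once, $c_i = c_i^- + 1$ for every $i\in I$. Now I read off the two terms of $w$ separately. For the linear term, $\sum_{j=1}^r c_{\overline{a_j}}(\bm{\lambda}) = \sum_{j=1}^r\bigl(c_{\overline{a_j}}(\bm{\lambda}^-)+1\bigr) = \left(\sum_{j=1}^r c_{\overline{a_j}}(\bm{\lambda}^-)\right)+r$, contributing $+r$. For the quadratic term, the uniform shift $c_i = c_i^-+1$ leaves every difference unchanged, $c_i - c_{i+1} = c_i^- - c_{i+1}^-$, so $\tfrac12\sum_{i\in I}(c_i-c_{i+1})^2$ is identical for $\bm{\lambda}$ and $\bm{\lambda}^-$ and contributes $0$. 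Combining the two gives $w(\bm{\lambda}) - w(\bm{\lambda}^-) = r$, as required. The conceptual heart is that the quadratic hub-defect term is insensitive to a simultaneous shift of all the $c_i$, so only the linear term — one unit per residue $\overline{a_j}$, summed over the $r$ components — survives, producing exactly $r$.
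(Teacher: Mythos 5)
Your argument is correct. The key fact you isolate --- that an $e$-rim hook contains exactly one node of each residue, because its $e$ nodes lie on $e$ consecutive diagonals so their contents $c-b$ run through $e$ consecutive integers --- is exactly right, and your abacus justification (sliding a bead from position $\beta$ to the empty position $\beta-e$ removes the rim nodes of contents $\beta-a_j-e+1,\ldots,\beta-a_j$) is sound. From there the bookkeeping is as you say: every $c_i$ drops by $1$, the quadratic term of $w$ is invariant under a uniform shift of the $c_i$, and the linear term drops by exactly $r$.

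For comparison: the paper itself gives no proof, importing the statement from Fayers as Corollary~3.4 of \cite{Fay06}, so there is no ``paper route'' to match against. The derivation in Fayers' paper (and the one most readily available from this paper's own toolkit) goes through the hub rather than directly through the weight formula: since the rim hook removes one node of each residue, $\bm\lambda$ and $\bm\lambda^-$ have the same hub, and then Proposition~\ref{hubblock}(1) gives $w(\bm\lambda)-w(\bm\lambda^-)=r(n-(n-e))/e=r$ immediately. Both arguments rest on the same combinatorial input (one node per residue in the hook); yours is more self-contained in that it uses only Definition~\ref{mult_weight}, while the hub route packages the quadratic-term cancellation inside an already-quoted result. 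Either is acceptable; there is no gap.
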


\begin{defin}
If $\bm{\lambda} = (\lambda^{(1)}, \dots, \lambda^{(r)})$ is a multipartition and each $\lambda^{(j)}$ is an $e$-core, then we say that $\bm{\lambda}$ is a \textbf{multicore}.
\end{defin}

%\begin{prop}\cite[Proposition 3.5]{Fay06}\label{multcore_weight}
%Suppose that  $\bm{\lambda}= (\lambda^{(1)}, \ldots, \lambda^{(r)})$ is a multicore. Then
%$$
%w(\bm{\lambda})=\sum_{1\leq k<l \leq r} w((\lambda^{(k)}, \lambda^{(l)})).
%$$
%\end{prop}
Let $i\in I$. Suppose $\bm{\lambda}= (\lambda^{(1)}, \ldots, \lambda^{(r)})$ is a multicore. We construct the corresponding abacus display for $\bm{\lambda}$ as in Section \ref{betaAbacus}. For $j,k \in \{1,\dots, r\}$, we define
$$
\gamma_i^{jk}(\bm{\lambda})=\ell^{\bm{\mathrm{a}}}_{ij}(\bm{\lambda})-\ell^{\bm{\mathrm{a}}}_{ik}(\bm{\lambda})).
$$
$\gamma_i^{jk}(\bm{\lambda})$ may then be regarded as the difference in height between the lowest bead on runner $i$ of the abacus display for $\lambda^{(j)}$ and the lowest bead on runner $i$ of the abacus display for $\lambda^{(k)}$. Since $e$ is finite, the integers $\gamma_i^{jk}(\bm{\lambda})$ depend on the choice of $\bm{\mathrm{a}}$ if we change any $a_j$ by a multiple of $e$, but the differences
$$
\gamma_{il}^{jk}(\bm{\lambda}):=\gamma_i^{jk}(\bm{\lambda})-\gamma_l^{jk}(\bm{\lambda})
$$
do not.

Suppose $\bm{\lambda}= (\lambda^{(1)}, \ldots, \lambda^{(r)})$ is a multicore, $i,l\in I$ and $j,k \in \{1,\dots, r\}$. We define $s_{il}^{jk}(\bm{\lambda})$ to be the multicore whose abacus configuration is obtained from that of $\bm{\lambda}$ by moving a bead from runner $i$ to runner $l$ on the abacus for $\bm{\lambda}^{(j)}$, and moving a bead from runner $l$ to runner $i$ on the abacus for $\bm{\lambda}^{(k)}$. It is worth noting that $s^{jk}_{il}(\bm\lambda)=s^{kj}_{li}(\bm\lambda)$ for all $i,j,k,l$.

\begin{prop}\cite[Proposition 1.6]{Fay07}\label{computew}
Let $\bm{\lambda}$ be a multicore and let $s_{il}^{jk}(\bm{\lambda})$ be defined as above. Then
\begin{enumerate}
\item $s_{il}^{jk}(\bm{\lambda})$ has the same hub as $\bm\lambda$, and,
\item $w(s_{il}^{jk}(\bm{\lambda}))=w(\bm{\lambda})- r(\gamma_{il}^{jk}(\bm{\lambda})-2).$
\end{enumerate}
\end{prop}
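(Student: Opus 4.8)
The plan is to reduce both statements to bookkeeping on the ``lowest-bead levels'' of the multicore. Write $\ell_{ij} := \ell^{\bm{\mathrm{a}}}_{ij}(\bm\lambda)$ for the level of the lowest bead on runner $i$ of the abacus of $\lambda^{(j)}$. Since $\bm\lambda$ is a multicore, each runner of each component is packed (every bead has a bead immediately above it), so the whole abacus configuration, and hence $\bm\lambda$ itself, is completely determined by the array $(\ell_{ij})_{i\in I,\,1\le j\le r}$. The operation $s_{il}^{jk}$ removes the lowest bead on runner $i$ of component $j$ and inserts a bead at the bottom of runner $l$ of component $j$, and symmetrically swaps the roles of $i,l$ in component $k$; keeping both components packed, its only effect on the levels is
\[
\ell_{ij}\mapsto \ell_{ij}-1,\quad \ell_{lj}\mapsto \ell_{lj}+1,\quad \ell_{ik}\mapsto \ell_{ik}+1,\quad \ell_{lk}\mapsto \ell_{lk}-1,
\]
with all other levels unchanged. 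This is the only input I need from the definition of $s_{il}^{jk}$.

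For part (1) I would first express the hub in terms of the levels. Counting addable and removable $i$-nodes of a single $e$-core component on the abacus, an addable $i$-node corresponds to a bead on runner $i-1$ at a level with no bead on runner $i$, and a removable $i$-node to a bead on runner $i$ at a level with no bead on runner $i-1$; comparing the packed runners gives $\delta_i^j = \ell_{ij}-\ell_{(i-1)j}$ for $i\neq 0$, and $\delta_0^j = \ell_{0j}-\ell_{(e-1)j}-1$ because of the level shift in the wrap-around from runner $e-1$ to runner $0$. Summing over $j$ and setting $L_i := \sum_{j=1}^r \ell_{ij}$, the hub is $\delta_i(\bm\lambda)=L_i-L_{i-1}$ for $i\neq0$ and $\delta_0(\bm\lambda)=L_0-L_{e-1}-r$; in particular the hub is a function of the totals $(L_i)_{i\in I}$ alone. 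The displayed level changes alter $L_i$ by $(-1)+(+1)=0$ and $L_l$ by $(+1)+(-1)=0$, leaving every $L_m$ fixed, so the hub is unchanged. This proves (1).

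For part (2) I would exploit that, having proved (1), $\bm\lambda$ and $s_{il}^{jk}(\bm\lambda)$ are multicores with the same hub, so Proposition~\ref{hubblock}(1) applies and gives
\[
w(\bm\lambda)-w\big(s_{il}^{jk}(\bm\lambda)\big)=\frac{r\big(|\bm\lambda|-|s_{il}^{jk}(\bm\lambda)|\big)}{e}.
\]
It therefore remains to compute the change in size. Moving a single bead from $\beta$-position $p$ to $\beta$-position $p'$ (with the number of beads fixed) changes the size of that component by exactly $p'-p$, since increasing a $\beta$-number by one adds a node and raising a bead by one level removes an $e$-rim hook. In component $j$ the moved bead goes from position $\ell_{ij}e+i$ to $(\ell_{lj}+1)e+l$, and in component $k$ from $\ell_{lk}e+l$ to $(\ell_{ik}+1)e+i$; adding the two displacements, the cross terms $l-i$ and $i-l$ cancel and one obtains
\[
|s_{il}^{jk}(\bm\lambda)|-|\bm\lambda| = e\big(\ell_{lj}-\ell_{ij}+\ell_{ik}-\ell_{lk}\big)+2e = e\big(2-\gamma_{il}^{jk}(\bm\lambda)\big),
\]
using $\gamma_{il}^{jk}(\bm\lambda)=(\ell_{ij}-\ell_{ik})-(\ell_{lj}-\ell_{lk})$. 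Substituting into the weight-difference formula yields $w(\bm\lambda)-w(s_{il}^{jk}(\bm\lambda))=r(\gamma_{il}^{jk}(\bm\lambda)-2)$, which is the claim.

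The main obstacle is the careful derivation of the hub-from-levels formula: one has to treat addable and removable nodes uniformly across the cyclic runners and get the wrap-around correction at runner $0$ right (the $-1$ per component, hence the $-r$ in $\delta_0$), and to check that ``remove the lowest bead, insert at the bottom'' is forced once the output is required to remain a multicore. Everything after that — the cancellation in the $L_i$ and the size computation — is routine once the signs and the orientation of $\gamma_{il}^{jk}$ are pinned down. A minor point worth recording is that $\gamma_{il}^{jk}(\bm\lambda)$, and hence the whole formula, is independent of the choice of multicharge $\bm{\mathrm{a}}$, as already noted in the text preceding the statement.
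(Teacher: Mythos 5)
The paper does not prove this proposition --- it is quoted verbatim from Fayers \cite[Proposition 1.6]{Fay07} --- and your argument is a correct, self-contained reconstruction along exactly the lines of the cited source: encode the multicore by its lowest-bead levels $\ell_{ij}$, observe that $\delta_i^j=\ell_{ij}-\ell_{(i-1)j}$ (with the $-1$ wrap-around correction at $i=0$) so that the hub depends only on the column sums $L_i$, which $s_{il}^{jk}$ preserves, and then convert the change in $|\bm\lambda|$ into the weight change via Proposition~\ref{hubblock}(1). Your computations check out (in particular $|s_{il}^{jk}(\bm\lambda)|-|\bm\lambda|=e\bigl(2-\gamma_{il}^{jk}(\bm\lambda)\bigr)$ is right); the only caveat, inherited from the statement itself rather than introduced by you, is that the formula implicitly assumes $i\neq l$ and $j\neq k$, since in the degenerate cases $s_{il}^{jk}$ is the identity while the right-hand side would give $w(\bm\lambda)+2r$.
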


\subsection{Scopes isometries}
Here we introduce maps between blocks of Ariki-Koike algebras analogous to those defined by Scopes \cite{Scopes91} between blocks of symmetric groups. Suppose $i \in I$, and let
$\phi_i \colon \mathbb{Z} \rightarrow \mathbb{Z}$ be the map given by
\begin{equation*}
\phi_i(x)=
\begin{cases}
x + 1 & x \equiv i - 1 \quad(\mathrm{mod}\text{ }e)\\
x - 1 & x \equiv i \quad(\mathrm{mod}\text{ }e)\\
x & \text{otherwise.}
\end{cases}
\end{equation*}
%If $e$ is finite, then 
$\phi_i$ descends to give a map from $I$ to $I$; we abuse notation by referring to this map as $\phi_i$ also.\\

Now suppose $\bm{\lambda}$ is a multipartition, and consider its abacus display with respect to the multicharge $\bm{\mathrm{a}}=(a_1, \ldots, a_r)$.
For each $j$, we define a partition $\Phi_i(\lambda^{(j)})$ by replacing each beta-number $\beta$ with $\phi_i(\beta)$.
Equivalently, we simultaneously remove all removable nodes of residue $i$ from $[\lambda^{(j)}]$ and add all addable nodes of $[\lambda^{(j)}]$ of residue $i$. If $i\neq 0$, this is equivalent to swapping runners $i-1$ and $i$ of each component in the abacus display of $\bm\lambda$; {if $i=0$, we swap runners $0$ and $e-1$ and then increase the level of each bead on runner $0$ by $1$ and decrease the level of each bead on runner $e-1$ by $1$.} We define $\Phi_i(\bm{\lambda})$ to be the multipartition $(\Phi_i(\lambda^{(1)}), \ldots,\Phi_i(\lambda^{(r)}))$.

\begin{exe}
Here we give two examples of the definition of $\phi_i$ for a partition. For $e=3$, consider the abacus configuration of the partition $\lambda$ given below and apply to $\lambda$ the map $\phi_1$. Then,
\begin{center}
\begin{tabular}{ccc}
$\lambda$ &  & $\phi_1(\lambda)$ \\
$       \begin{matrix}

        0 & 1 & 2 \\

        \bigvd & \bigvd & \bigvd \\

        \bigbd & \bigbd & \bigbd \\

        \bignb & \bigbd & \bignb \\

        \bigbd & \bignb & \bigbd \\
        
        \bignb & \bigbd & \bignb \\
        
        \bigbd & \bigbd & \bignb \\
        
        \bignb & \bigbd & \bignb \\

        \bigvd & \bigvd & \bigvd \\

        \end{matrix}
$
&
$\longrightarrow$
&
$       \begin{matrix}

        0 & 1 & 2 \\

        \bigvd & \bigvd & \bigvd \\

        \bigbd & \bigbd & \bigbd \\

        \bigbd & \bignb & \bignb \\

        \bignb & \bigbd & \bigbd \\
        
        \bigbd & \bignb & \bignb \\
        
        \bigbd & \bigbd & \bignb \\
        
        \bigbd & \bignb & \bignb \\

        \bigvd & \bigvd & \bigvd \\

        \end{matrix}
$\\
\end{tabular}.
\end{center}
So, it can be seen that the removal and the addition of nodes for $\lambda$ due to the application of $\phi_1$ is simultaneous.

Now consider again the above abacus configuration for the partition $\lambda$, but this time we apply $\phi_0$ to it. Then,
\begin{center}
\begin{tabular}{ccc}
$\lambda$ &  & $\phi_0(\lambda)$ \\
$       \begin{matrix}

        0 & 1 & 2 \\

        \bigvd & \bigvd & \bigvd \\

        \bigbd & \bigbd & \bigbd \\

        \bignb & \bigbd & \bignb \\

        \bigbd & \bignb & \bigbd \\
        
        \bignb & \bigbd & \bignb \\
        
        \bigbd & \bigbd & \bignb \\
        
        \bignb & \bigbd & \bignb \\

        \bigvd & \bigvd & \bigvd \\

        \end{matrix}
$
&
$\longrightarrow$
&
$       \begin{matrix}

        0 & 1 & 2 \\

        \bigvd & \bigvd & \bigvd \\

        \bigbd & \bigbd & \bignb \\

        \bigbd & \bigbd & \bigbd \\

        \bignb & \bignb & \bignb \\
        
        \bigbd & \bigbd & \bigbd \\
        
        \bignb & \bigbd & \bignb \\
        
        \bignb & \bigbd & \bignb \\

        \bigvd & \bigvd & \bigvd \\

        \end{matrix}
$\\
\end{tabular}.
\end{center}

\end{exe}

\begin{prop}\cite[Proposition 4.6]{Fay06}\label{bij_mpart}
Suppose $B$ is a block of $\mathcal{H}_{r,n}$ and $i \in I$. Then there is a block $\bar{B}$ of $\mathcal{H}_{r,n-\delta_i(B)}$ with the same weight as $B$. Moreover, $\Phi_i$ gives a bijection between the set of multipartitions in $B$ and the set of multipartitions in $\bar{B}$.
\end{prop}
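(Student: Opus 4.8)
The plan is to work entirely with residue contents, using Theorem~\ref{block_car} to translate ``lies in a block'' into ``has a prescribed multiset of residues''; this sidesteps the abacus level-shift that distinguishes $i=0$ from $i\neq 0$. Write $c_k=c_k(\bm\lambda)$ for the number of nodes of residue $k$. By the description of $\Phi_i$, we simultaneously delete every removable $i$-node and adjoin every addable $i$-node of each component; since all of these nodes have residue $i$, this alters only the residue-$i$ count, giving
$$c_i(\Phi_i(\bm\lambda)) = c_i(\bm\lambda) - \delta_i(\bm\lambda), \qquad c_k(\Phi_i(\bm\lambda)) = c_k(\bm\lambda) \quad (k\neq i).$$
As $\delta_i(\bm\lambda)=\delta_i(B)$ for $\bm\lambda\in B$, this already yields $|\Phi_i(\bm\lambda)| = n-\delta_i(B)$, so $\Phi_i(\bm\lambda)$ is a multipartition of $n-\delta_i(B)$.

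For the bijection I would first note that $\phi_i\colon\Z\to\Z$ is an involution, so $\Phi_i$ is an involution on the set of all multipartitions and in particular a bijection. Since two multipartitions lie in the same block if and only if they have the same residue multiset (Theorem~\ref{block_car}), and every $\bm\lambda\in B$ shares the same content $(c_k)$, the displayed formulas show that all the $\Phi_i(\bm\lambda)$ share the same content; hence $\Phi_i$ carries $B$ into a single block $\bar B$ of $\mathcal H_{r,n-\delta_i(B)}$. Running the same computation on $\bar B$ (where one finds $\delta_i(\bar B)=-\delta_i(B)$) shows $\Phi_i$ carries $\bar B$ back into $B$, so by involutivity $\Phi_i$ restricts to mutually inverse maps between the multipartitions of $B$ and those of $\bar B$.

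The substantive point is that $\bar B$ has the same weight as $B$, and this is where the real work lies. Here I would use the relation between the hub and the content,
$$\delta_i(\bm\lambda) = 2c_i - c_{i-1} - c_{i+1} - m_i, \qquad m_i := \#\{\,j : \overline{a_j} = i\,\},$$
which follows by building $\bm\lambda$ up from $\bm\varnothing$ one node at a time and noting that adjoining a node of residue $k$ changes the hub by the affine Cartan rule $(\delta_{k-1},\delta_k,\delta_{k+1})\mapsto(\delta_{k-1}-1,\delta_k+2,\delta_{k+1}-1)$, with base case $\delta_i(\bm\varnothing)=-m_i$. Writing the first term of the weight formula as $\sum_{k\in I} m_k c_k$ and feeding $c_i'=c_i-\delta_i$, $c_k'=c_k$ $(k\neq i)$ into Definition~\ref{mult_weight}, only the summand $m_i c_i$ and the two squares $(c_{i-1}-c_i)^2,\,(c_i-c_{i+1})^2$ move, and a direct expansion gives
$$w(\Phi_i(\bm\lambda)) - w(\bm\lambda) = -\delta_i\bigl(\delta_i - (2c_i - c_{i-1} - c_{i+1} - m_i)\bigr) = 0$$
by the hub--content relation.

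The main obstacle is thus not the bijection, which is essentially formal once $\Phi_i$ is recognised as an involution, but the weight identity: one must pin down the correct normalisation of the hub--content relation, including the multicharge term $m_i$, and treat $i=0$ on the same footing as $i\neq0$. Working with residue contents rather than the abacus makes the latter automatic, since both $\delta_i = 2c_i - c_{i-1} - c_{i+1} - m_i$ and the weight formula are cyclic in $I=\Z/e\Z$. Alternatively one could deduce the weight equality from Proposition~\ref{computew} by realising the runner swap on a multicore representative as a composition of the moves $s_{il}^{jk}$, but the content computation is cleaner and avoids choosing such a representative.
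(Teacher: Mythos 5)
This proposition is imported by the paper from \cite[Proposition 4.6]{Fay06} without proof, so there is no in-paper argument to match yours against; what you have written is a correct, self-contained derivation. Your two reductions are both sound: the bijection part is indeed formal once one observes that $\phi_i$ is an involution of $\mathbb{Z}$ commuting with the shift by $e$ (so $\Phi_i$ sends the $\beta$-set of charge $a_j$ to another $\beta$-set of charge $a_j$, hence is an involution on multipartitions), and Theorem~\ref{block_car} converts block membership into equality of residue contents, which your formulas $c_i\mapsto c_i-\delta_i$, $c_k\mapsto c_k$ $(k\neq i)$ control completely, including the return trip via $\delta_i(\bar B)=-\delta_i(B)$. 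The weight identity is the only place where an error could hide, and I checked your expansion: the change in $\sum_k m_kc_k$ is $-m_i\delta_i$ and the change in $-\tfrac12\sum_k(c_k-c_{k+1})^2$ is $-\delta_i\bigl(c_{i-1}+c_{i+1}-2c_i+\delta_i\bigr)$, which sum to $-\delta_i\bigl(\delta_i-(2c_i-c_{i-1}-c_{i+1}-m_i)\bigr)=0$ exactly as you state; the computation also survives the degenerate case $e=2$, where $i-1=i+1$ and both squares coincide, because the hub--content relation $\delta_i=2c_i-c_{i-1}-c_{i+1}-m_i$ (with the same index collision) still holds. The one step you wave at rather than prove is that relation itself; the node-by-node induction with base case $\delta_i(\bm\varnothing)=-m_i$ and the local rule $(\delta_{k-1},\delta_k,\delta_{k+1})\mapsto(\delta_{k-1}-1,\delta_k+2,\delta_{k+1}-1)$ is standard and easily verified, but if this were to be written out in full you should include that verification (or cite it from \cite{Fay06}, where the hub is related to the content via the affine Cartan matrix). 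Your approach via contents is arguably cleaner than realising the runner swap through the moves $s^{jk}_{il}$ of Proposition~\ref{computew}, since it needs no multicore representative and treats $i=0$ uniformly.
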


We write $\Phi_i(B)$ for the block $\bar{B}$ described in Proposition \ref{bij_mpart}.

\subsection{Core blocks of Ariki-Koike algebras}
Here we introduce the notion of core blocks of Ariki-Koike algebras, giving several equivalent definitions following the work of Fayers in \cite{Fay07}. This definition is the one generalising the  $e$-core notion to multipartitions.

\begin{thm}\label{corebleq}
Suppose that $\bm{\lambda}$ is a multipartition lying in a block $B$ of $\mathcal{H}_{r,n}$. Let $\kappa$ be a multicharge of $\mathcal{H}_{r,n}$. The following are equivalent.
\begin{enumerate}
\item $\bm{\lambda}$ is a multicore, and there exists a multicharge $\bm{\mathrm{a}} = (a_1, \ldots, a_r)$ such that $a_j \equiv \kappa_j\text{ }(\mathrm{mod}\text{ }e)$ for all $j$ and integers $\alpha_0, \ldots, \alpha_{e-1}$ such that for each $i\in I$ and $j\in \{1, \ldots, r\}$, $\ell^{\bm{\mathrm{a}}}_{ij}(\bm{\lambda})$ equals either $\alpha_i$ or $\alpha_i + 1$.
We call such an $e$-tuple $(\alpha_0, \ldots, \alpha_{e-1})$ a \textbf{base tuple} for $\bm{\lambda}$.
%\item $\bm{\lambda}$ is a multicore, and there exist a multicharge $\bm{\mathrm{a}} = (a_1, \ldots, a_r)$ such that $a_i \equiv \kappa_i\text{ }\mathrm{mod}\text{ }e$ and integers $s_1, \ldots, s_r$ such that
%$$
%\dfrac{\mathfrak{b}^{\bm{\mathrm{a}}}_{ij}(\bm{\lambda}) - \mathfrak{b}^{\bm{\mathrm{a}}}_{ik}(\bm{\lambda})}{e} \leq s_j - s_k + 1
%$$
%for all $i \in \{0, \ldots, e-1\}$, $j, k \in \{1, \ldots, r\}$.
%\item  $\bm{\lambda}$ is a multicore, and for any multicharge $\bm{\mathrm{a}} = (a_1, \ldots, a_r)$ such that $a_i \equiv \kappa_i\text{ }\mathrm{mod}\text{ }e$ there exist integers $s_1, \ldots, s_r$ such that
%$$
%\dfrac{\mathfrak{b}^{\bm{\mathrm{a}}}_{ij}(\bm{\lambda}) - \mathfrak{b}^{\bm{\mathrm{a}}}_{ik}(\bm{\lambda})}{e} \leq s_j - s_k + 1
%$$
%for all $i \in \{0, \ldots, e-1\}$, $j, k \in \{1, \ldots, r\}$.
\item There is no block of any $\mathcal{H}_{r,m}$ with %the same multicharge $\kappa$ of $\mathcal{H}_{r,n}$, 
the same hub as $B$ and smaller weight than $B$.
\item Every multipartition in $B$ is a multicore.
\end{enumerate}
\end{thm}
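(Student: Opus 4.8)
The plan is to first dispose of a trivial reduction and then prove the two equivalences $(2)\Leftrightarrow(3)$ and $(1)\Leftrightarrow(2)$. For the reduction, note that (1) and (3) name $\bm{\lambda}$ as a multicore outright, while if (2) holds and some multipartition in $B$ carried an $e$-rim hook, then removing it would (by Proposition~\ref{mult_remhook}) lower the weight by $r$ while fixing the hub, yielding a block of $\mathcal{H}_{r,n-e}$ with the hub of $B$ and smaller weight, contradicting (2). Thus each of (1)--(3) forces $\bm{\lambda}$ to be a multicore, and if $\bm{\lambda}$ is not a multicore all three fail; so I assume henceforth that $\bm{\lambda}$ is a multicore.

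For $(2)\Leftrightarrow(3)$ I would argue purely from rim-hook moves together with Propositions~\ref{mult_remhook} and~\ref{hubblock}. If (3) fails, some $\bm{\mu}\in B$ admits an $e$-rim hook whose removal yields a multipartition of $n-e$ with the same hub but weight $w(B)-r$, which by Proposition~\ref{hubblock} lies in a block of $\mathcal{H}_{r,n-e}$ sharing the hub of $B$ and of smaller weight, so (2) fails. Conversely, if (2) fails there is a block $C$ with the hub of $B$ and $w(C)<w(B)$; Proposition~\ref{hubblock}(1) forces $C$ to be a block of some $\mathcal{H}_{r,m}$ with $n-m$ a positive multiple of $e$, and by repeatedly sliding a lowest bead down its runner (adjoining $e$-rim hooks, each fixing the hub) I reach a multipartition of $n$ with the hub of $B$, hence in $B$, which carries an $e$-rim hook and so is not a multicore; thus (3) fails.

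For $(1)\Leftrightarrow(2)$ the engine is Proposition~\ref{computew}. Since rim-hook removal shows the minimal weight among blocks with the hub of $B$ is attained on a multicore, condition (2) is equivalent to $\bm{\lambda}$ realising the least weight among all multicores sharing its hub; the moves $s_{il}^{jk}$ relate such multicores and change the weight by $-r(\gamma_{il}^{jk}(\bm{\lambda})-2)$, and I would use them to probe this minimum. Writing $\ell_{ij}$ for $\ell^{\bm{\mathrm{a}}}_{ij}(\bm{\lambda})$, I record that replacing $a_j$ by $a_j+e$ shifts all $\ell_{ij}$ (over $i$) by $1$, so a base tuple is exactly a vector of integer shifts $t_j$ with $\max_j(\ell_{ij}+t_j)-\min_j(\ell_{ij}+t_j)\le 1$ for every runner $i$, the associated shift-invariants being $\gamma_{il}^{jk}(\bm{\lambda})=(\ell_{ij}-\ell_{lj})-(\ell_{ik}-\ell_{lk})$. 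A single move $s_{il}^{jk}$ strictly lowers the weight exactly when $\gamma_{il}^{jk}(\bm{\lambda})\ge 3$, so minimality of $B$ yields at once the necessary inequalities $\gamma_{il}^{jk}(\bm{\lambda})\le 2$ for all $i,l,j,k$; dually, existence of a base tuple forces the same bounds. The crux is then to promote these pairwise bounds to an actual base tuple, that is, to solve the system of difference constraints on the $t_j$ imposed by requiring spread at most $1$ on every runner.

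The hard part, and where I expect the real work to lie, is precisely this promotion. The inequalities $\gamma_{il}^{jk}(\bm{\lambda})\le 2$ are detected by single $s$-moves, but they are genuinely weaker than the base-tuple condition: the moves with $\gamma_{il}^{jk}(\bm{\lambda})=2$ preserve the weight, so a multicore can be a strict local minimum for single moves while lying on a weight plateau from which a further move descends, and one can exhibit bead-level data satisfying all $\gamma_{il}^{jk}(\bm{\lambda})\le 2$ that nonetheless admits no base tuple. Hence $(1)\Leftrightarrow(2)$ cannot be settled by one-move analysis alone. I would instead attack the optimisation globally: either compute the minimal weight of a block with a given hub in closed form from the hub data and verify that a base-tuple multicore attains this value (giving $(1)\Rightarrow(2)$) while every non-base-tuple multicore exceeds it (giving $(2)\Rightarrow(1)$); or argue combinatorially, by induction on $r$, that in the absence of a base tuple one can traverse weight-preserving $s$-moves to reach a configuration admitting a strictly weight-decreasing move, and, when a base tuple exists, assemble the shifts $t_j$ one component at a time so that each newly placed runner respects the two-level bands already fixed. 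Controlling the plateaus and the global consistency of the shifts --- while respecting the congruences $a_j\equiv\kappa_j\pmod e$ --- is the main obstacle.
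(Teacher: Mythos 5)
First, a point of comparison: the paper does not prove this theorem at all. It is imported verbatim from Fayers' work on core blocks of Ariki--Koike algebras \cite{Fay07}, where the equivalence of (1) with (2) is the substantive technical content; the surrounding text in Section \ref{Faybackground} explicitly presents it as a summary of results from that paper. So your attempt cannot be matched against an in-paper argument and must stand on its own.

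On its own terms, your reduction to the multicore case and your proof of $(2)\Leftrightarrow(3)$ --- rim-hook removal lowers the weight by $r$ while fixing the hub (Proposition \ref{mult_remhook}), and conversely a lower-weight block with the same hub lives at $m\equiv n\ (\mathrm{mod}\ e)$ with $m<n$ by Proposition \ref{hubblock}, so adding rim hooks lands you back in $B$ at a non-multicore --- are sound. The genuine gap is $(1)\Leftrightarrow(2)$, which you do not prove in either direction. You correctly diagnose that single $s_{il}^{jk}$-moves only yield the inequalities $\gamma_{il}^{jk}(\bm{\lambda})\le 2$, that these are strictly weaker than the existence of a base tuple (weight-preserving moves with $\gamma_{il}^{jk}=2$ can sit on a plateau from which a later move descends), and that a one-move analysis therefore cannot close the argument; but having identified the crux you stop, offering two candidate strategies (a closed-form minimal weight computed from the hub, or an induction on $r$ traversing the plateau) without carrying either out. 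That promotion step is exactly where the theorem's content lies: in \cite{Fay07} it is handled by showing that from any multicore one can reach a base-tuple configuration by a weight-non-increasing sequence of moves and that the weight of a base-tuple configuration is determined by the hub alone, hence minimal. Note also that the paper's Lemma \ref{seq_Fay} and Proposition \ref{seq_mult_anye}, which encode precisely this traversal, are themselves extracted from Fayers' proof of this theorem, so appealing to them here would be circular. As it stands the proposal establishes $(2)\Leftrightarrow(3)$ but leaves the equivalence with (1) open.
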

Now we can make the definition of a core block for the Ariki-Koike algebra $\mathcal{H}_{r,n}$.
\begin{defin}
Suppose $B$ is a block of $\mathcal{H}_{r,n}$. Then we say that $B$ is a \textbf{core block} if and only if %either
%\begin{itemize}
%\item
the equivalent conditions of Theorem \ref{corebleq} are satisfied for any multipartition $\bm{\lambda}$ in $B$, %or
%\item $e = \infty$.
%\end{itemize}
\end{defin}

{Theorem \ref{corebleq} gives us several equivalent conditions for a multipartition to lie in a core block. %As a corollary, we can generate all the multipartitions lying in a given core block by a sequence of `elementary moves'. 
Moreover, condition (2) of Theorem \ref{corebleq} together with point (2) of Proposition \ref{hubblock} implies that, of the blocks with a given hub, only the one with the smallest weight is a core block. So, if $\bm\lambda$ is a multipartition with this hub, then we may speak of this core block as the \textbf{core block} of $\bm\lambda$.}

Adapting Theorem \ref{corebleq} we have the following result.

\begin{prop}\label{charCoreBl}
%Suppose $e < \infty $, 
Suppose $\bm{\lambda}$ is a multicore and $\kappa =(\kappa_1, \ldots, \kappa_r)$ is a multicharge for $\mathcal{H}_{r,n}$. Then $S^{\bm{\lambda}}$ lies in a core block of $\mathcal{H}_{r,n}$ if and only if there is $\bm{\mathrm{a}} = (a_1, \ldots, a_r) \in \mathbb{Z}^r$ such that $a_j \equiv \kappa_j \text{ }(\mathrm{mod}\text{ }e)$ for all $j$ and an abacus configuration for $\bm{\lambda}$ such that
$$|\gamma^{jk}_i (\bm{\lambda})| = |\ell^{\bm{\mathrm{a}}}_{ij}(\bm{\lambda}) - \ell^{\bm{\mathrm{a}}}_{ik}(\bm{\lambda})| \leq 1 \text{ for each } i \in I \text{ and } j,k \in \{1, \ldots, r\}.$$
\end{prop}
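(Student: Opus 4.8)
The plan is to deduce this directly from Theorem \ref{corebleq}, by showing that the stated inequality condition on the $\gamma^{jk}_i(\bm{\lambda})$ is merely a reformulation of the ``base tuple'' condition (1) of that theorem. Since $\bm{\lambda}$ is assumed to be a multicore, Theorem \ref{corebleq} tells us that $S^{\bm{\lambda}}$ lies in a core block precisely when there is a multicharge $\bm{\mathrm{a}}=(a_1, \ldots, a_r)$ with $a_j \equiv \kappa_j \pmod e$ and integers $\alpha_0, \ldots, \alpha_{e-1}$ such that $\ell^{\bm{\mathrm{a}}}_{ij}(\bm{\lambda}) \in \{\alpha_i, \alpha_i+1\}$ for every $i \in I$ and $j \in \{1, \ldots, r\}$. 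So I would reduce the proposition to showing, for a fixed choice of $\bm{\mathrm{a}}$ (equivalently, a fixed abacus configuration for $\bm{\lambda}$), that a base tuple exists if and only if $|\gamma^{jk}_i(\bm{\lambda})| \leq 1$ for all $i, j, k$.

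Recall that, by definition, $\gamma^{jk}_i(\bm\lambda) = \ell^{\bm{\mathrm{a}}}_{ij}(\bm\lambda) - \ell^{\bm{\mathrm{a}}}_{ik}(\bm\lambda)$, so the inequality $|\gamma^{jk}_i| \leq 1$ is a statement purely about the levels of the lowest beads on runner $i$ across the components. For the forward direction, if $(\alpha_0, \ldots, \alpha_{e-1})$ is a base tuple then for each $i$ both $\ell^{\bm{\mathrm{a}}}_{ij}(\bm\lambda)$ and $\ell^{\bm{\mathrm{a}}}_{ik}(\bm\lambda)$ lie in the two-element set $\{\alpha_i, \alpha_i+1\}$, so their difference has absolute value at most $1$. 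For the converse, I would fix $i$ and set $\alpha_i := \min_{1 \leq j \leq r} \ell^{\bm{\mathrm{a}}}_{ij}(\bm\lambda)$, attained at some component $k$; then for every $j$ we have $0 \leq \ell^{\bm{\mathrm{a}}}_{ij}(\bm\lambda) - \alpha_i = \gamma^{jk}_i(\bm\lambda) \leq |\gamma^{jk}_i(\bm\lambda)| \leq 1$, so $\ell^{\bm{\mathrm{a}}}_{ij}(\bm\lambda) \in \{\alpha_i, \alpha_i+1\}$ and $(\alpha_0, \ldots, \alpha_{e-1})$ is a base tuple.

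Combining the two directions gives the equivalence of the base tuple condition with the $\gamma$-inequality for each fixed $\bm{\mathrm{a}}$; quantifying over the choice of $\bm{\mathrm{a}}$ (with $a_j \equiv \kappa_j \pmod e$) on both sides then yields the proposition via Theorem \ref{corebleq}. I expect the only point requiring care to be the bookkeeping of quantifiers: the quantities $\ell^{\bm{\mathrm{a}}}_{ij}(\bm\lambda)$, and hence the $\gamma^{jk}_i(\bm\lambda)$, depend on the multicharge $\bm{\mathrm{a}}$ (changing some $a_j$ by a multiple of $e$ shifts the beads on the corresponding abacus and thus the levels), so one must ensure that the existential ``there is $\bm{\mathrm{a}} \ldots$ and an abacus configuration'' in the proposition is matched with the existential choice of $\bm{\mathrm{a}}$ in condition (1). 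There is no genuine analytic difficulty here; the entire content is the elementary observation that a finite set of integers has all pairwise differences at most $1$ if and only if it is contained in a set of the form $\{\alpha, \alpha+1\}$.
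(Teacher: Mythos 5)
Your proposal is correct and matches the paper's intent exactly: the paper offers no separate proof, simply introducing the proposition with ``Adapting Theorem \ref{corebleq} we have the following result,'' and the adaptation it has in mind is precisely your elementary observation that the base-tuple condition on each runner is equivalent to all pairwise level differences having absolute value at most $1$. Your handling of the quantifiers over $\bm{\mathrm{a}}$ is also the right point to flag, and the argument is complete.
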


%\begin{rem}
%If $e$ is finite and $\bm\lambda$ is a multicore, then the integers $\gamma^{jk}_i (\bm{\lambda})$ are the difference of beads between the runner $i$ of the abacus display for $\lambda^{(j)}$ and those on runner $i$ of the abacus display for $\lambda^{(k)}$ and depend on the choice of $\bm{\mathrm{a}}$.
%However, the differences
%$$\gamma_{il}^{(jk)}(\bm{\lambda}):=\gamma_i^{(jk)}(\bm{\lambda})-\gamma_l^{(jk)}(\bm{\lambda})$$
%depend on $\bm{\mathrm{a}}$.
%\end{rem}

%Now, denote the difference of beads between two runners of truncated abacus display by
%$$\gamma^{(j)}_{il} (\bm{\lambda}):=\Gamma^{\bm{\mathrm{a}}}_{ij} (\bm{\lambda}) - \Gamma^{\bm{\mathrm{a}}}_{lj}(\bm{\lambda}),$$
%for all $i,l \in \{0, 1, \ldots e-1\}$ and $j \in \{1, \ldots, r\}$.

\begin{cor}\label{k-m<=2}
Assume the conditions of Proposition \ref{charCoreBl}. If $S^{\bm{\lambda}}$ lies in a core block of $\mathcal{H}_{r,n}$, then there exists $\bm{\mathrm{a}} = (a_1, \ldots, a_r) \in \mathbb{Z}^r$ such that $a_j \equiv \kappa_j \text{ }(\mathrm{mod}\text{ }e)$ for all $j$ so that the abacus configuration of $\bm{\lambda}$ with respect to $\bm{\mathrm{a}}$ is such that
$$|\delta^{j}_{i}(\bm{\lambda}) - \delta^{k}_{i}(\bm{\lambda}) |\leq 2 \text{ for each } i \in I \text{ and } j,k \in \{1, \ldots, r\}.$$
\end{cor}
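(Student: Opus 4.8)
The plan is to deduce Corollary~\ref{k-m<=2} from Proposition~\ref{charCoreBl} by translating the bead-level statement into a statement about the hub contributions $\delta_i^j(\bm\lambda)$. The key observation I would use is that for a multicore, the quantities $\delta_i^j(\bm\lambda)$ can be read off directly from the abacus configuration of the single component $\lambda^{(j)}$: the addable/removable $i$-nodes of $\lambda^{(j)}$ correspond exactly to beads that can be moved between runner $i-1$ and runner $i$ (as described at the end of Section~\ref{betaAbacus}), and for a multicore each runner has its beads packed at the top with the lowest bead at level $\ell^{\bm{\mathrm a}}_{ij}(\bm\lambda)$. So the first step is to express $\delta_i^j(\bm\lambda)$ as a difference of the lowest-bead levels on two adjacent runners, something of the shape $\delta_i^j(\bm\lambda) = \ell^{\bm{\mathrm a}}_{(i-1)j}(\bm\lambda) - \ell^{\bm{\mathrm a}}_{ij}(\bm\lambda)$ (up to a fixed correction depending on $i$ and the multicharge, and with the usual $i=0$ wraparound handled separately). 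The point is that $\delta_i^j$ is, up to a constant that cancels in the difference $\delta_i^j - \delta_i^k$, just the gap between the heights of the lowest beads on runners $i-1$ and $i$ in component $j$.

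Granting that formula, the second step is a short computation. Fixing the multicharge $\bm{\mathrm a}$ supplied by Proposition~\ref{charCoreBl}, I would write
\[
\delta_i^j(\bm\lambda) - \delta_i^k(\bm\lambda)
= \bigl(\ell^{\bm{\mathrm a}}_{(i-1)j} - \ell^{\bm{\mathrm a}}_{ij}\bigr)
- \bigl(\ell^{\bm{\mathrm a}}_{(i-1)k} - \ell^{\bm{\mathrm a}}_{ik}\bigr),
\]
where the additive constants depending only on $i$ and $\bm{\mathrm a}$ have cancelled because they are independent of the component index. Regrouping gives
\[
\delta_i^j(\bm\lambda) - \delta_i^k(\bm\lambda)
= \bigl(\ell^{\bm{\mathrm a}}_{(i-1)j} - \ell^{\bm{\mathrm a}}_{(i-1)k}\bigr)
- \bigl(\ell^{\bm{\mathrm a}}_{ij} - \ell^{\bm{\mathrm a}}_{ik}\bigr)
= \gamma_{i-1}^{jk}(\bm\lambda) - \gamma_i^{jk}(\bm\lambda),
\]
using the definition $\gamma_i^{jk}(\bm\lambda) = \ell^{\bm{\mathrm a}}_{ij}(\bm\lambda) - \ell^{\bm{\mathrm a}}_{ik}(\bm\lambda)$. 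Now Proposition~\ref{charCoreBl} guarantees exactly that $|\gamma_i^{jk}(\bm\lambda)| \leq 1$ for every runner $i$, so the triangle inequality yields
\[
|\delta_i^j(\bm\lambda) - \delta_i^k(\bm\lambda)|
\leq |\gamma_{i-1}^{jk}(\bm\lambda)| + |\gamma_i^{jk}(\bm\lambda)|
\leq 1 + 1 = 2,
\]
which is the desired bound.

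I expect the main obstacle to be the first step: pinning down the exact relationship between $\delta_i^j(\bm\lambda)$ and the lowest-bead levels, including the correct constant and the special handling of the runner $i=0$ wraparound (where, as in the definition of $\Phi_i$, one compares runner $0$ with runner $e-1$ across a level shift). One must verify that for a multicore all addable and removable $i$-nodes are genuinely controlled by the two adjacent runners and that no contribution is miscounted; the cleanest route is to recall that adding an $i$-node corresponds to moving a bead from runner $i-1$ to runner $i$, so $\delta_i^j$ counts the net number of beads that can move up from runner $i-1$ into runner $i$, which is precisely governed by the difference of the two lowest-bead levels. Once this identification is established, the constant contributions are independent of $j$ and $k$ and drop out of the difference, so the remainder is the routine triangle-inequality estimate above. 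I would present the level computation carefully and then invoke $|\gamma_i^{jk}|\leq 1$ from Proposition~\ref{charCoreBl} to conclude.
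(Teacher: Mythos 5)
Your proposal is correct and follows essentially the same route as the paper: the paper's proof likewise writes $\delta_i^j(\bm\lambda)-\delta_i^k(\bm\lambda)$ as a combination of the level differences $\ell^{\bm{\mathrm a}}_{ij}-\ell^{\bm{\mathrm a}}_{ik}$ and $\ell^{\bm{\mathrm a}}_{i-1,j}-\ell^{\bm{\mathrm a}}_{i-1,k}$ and applies the triangle inequality with the bound $|\gamma_i^{jk}(\bm\lambda)|\leq 1$ from Proposition~\ref{charCoreBl}. The only difference is that you make explicit (and correctly flag the $i=0$ wraparound for) the identification of $\delta_i^j$ with a difference of lowest-bead levels, which the paper uses silently; your sign convention is the opposite of the paper's, but this is immaterial since only absolute values enter.
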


\begin{proof}
By Proposition \ref{charCoreBl}, since there exists a multicharge $\mathrm{a}$ such that $|\gamma^{jk}_i (\bm{\lambda})| \leq 1$ for each $i \in I$ and $j,k \in \{1, \ldots, r\}$ we have
\begin{align*}
|\delta^{j}_{i}(\bm{\lambda}) - \delta^{k}_{i}(\bm{\lambda}) | &=|\ell^{\bm{\mathrm{a}}}_{ij}- \ell^{\bm{\mathrm{a}}}_{i-1,j} -(\ell^{\bm{\mathrm{a}}}_{ik}- \ell^{\bm{\mathrm{a}}}_{i-1,k})|\\
&= |\ell^{\bm{\mathrm{a}}}_{ij}- \ell^{\bm{\mathrm{a}}}_{ik} +\ell^{\bm{\mathrm{a}}}_{i-1,j}- \ell^{\bm{\mathrm{a}}}_{i-1,k}|\\
&\leq |\ell^{\bm{\mathrm{a}}}_{ij}- \ell^{\bm{\mathrm{a}}}_{ik}| +|\ell^{\bm{\mathrm{a}}}_{i-1,j}- \ell^{\bm{\mathrm{a}}}_{i-1,k}|\\
&\leq1+1=2.
\end{align*}
\end{proof}

%\todo{Consider moving this section to later}
%There is a way such that one may get from a multipartition to any other multipartition in the same block by a sequence of 'elementary moves', as described in the following result.
%Given multipartitions $\bm\lambda$ and $\bm\mu$, we write $\bm\lambda \leftrightsquigarrow \bm\mu$ (and say that $\bm\mu$ is obtained from $\bm\lambda$ by an \textit{elementary move}) if one of the following holds:
%\begin{itemize}
%\item $[\bm\mu]$ is obtained from $[\bm\lambda]$ by adding or removing a rim $e$-hook,
%\item $\bm\lambda$ and $\bm\mu$ are both multicores, and $\bm\mu = s_{il}^{jk}(\bm\lambda)$ for some $i$, $l$, $j$, $k$.
%\end{itemize}

%\begin{prop}\cite[Theorem 3.7]{Fay07}\label{blockmoves}
%Suppose $\bm\lambda$ and $\bm\mu$ are multipartitions lying in the same block of $\mathcal{H}_{n,r}$. Then there is a sequence $\bm\lambda = \bm\lambda_0,\ldots, \bm\lambda_t = \bm\mu$ of multipartitions such that $\bm\lambda_{i-1} \leftrightsquigarrow \bm\lambda_i$ for each $i$, and $w(\bm\lambda_i) \leq w(\bm\lambda)$ for each $i$.
%\end{prop}

\section{Results about multicores}\label{multicoreresults}
In this section, we give some results concerning properties of multicores that play a fundamental role in the proof of the main result of this paper. We fix some notation.

Let $\mathcal{H}_{r,n}$ be an Ariki-Koike algebra with multicharge $\kappa$. For $i\in I$ and a multicore $\bm{\mu}$, denote by:
$$
d_i(\bm \mu)=\min\{\delta_{i}^j(\bm \mu) \text{ }|\text{ } j \in \{1, \ldots, r\}\}.
$$ 
If the value of $i$ is clear, we will write $d(\bm \mu)$ instead of $d_i (\bm \mu)$.

%We should be able to reproduce the proof of Lemma 2.1 for $e=2$ in the general case, that is for any $e$ just doing some small adjustments.

%We can rephrase Proposition \ref{computew} in the following way that will be helpful below.
%\begin{rem}\label{rem_sijk}
%Let $\bm \lambda$ be a multicore in $\mathcal{H}_{r,n}$, $i\in \mathbb{Z}/e\mathbb{Z}$ and $j,k\in\{1,\ldots, r\}$. Then:
%\begin{itemize}
%\item[1)] $s_{i,i-1}^{jk}(\bm\lambda)$ has the same hub as $\bm\lambda$;
%\item[2)] $$ \delta_i^{l}(s_{i,i-1}^{jk}(\bm\lambda))=
%\begin{cases}
%\delta_i^l(\bm \lambda) & \text{ if } l\neq j,k \\
%\delta_i^l(\bm \lambda)-2 & \text{ if } l=j \\
%\delta_i^l(\bm \lambda)+2 & \text{ if } l=k.
%\end{cases}$$
%\item[3)] $w(s_{i,i-1}^{jk}(\bm\lambda))=w(\bm\lambda)-r(\delta_i^j(\bm\lambda)-\delta_i^k(\bm\lambda)-2)$.
%\end{itemize}
%\end{rem}

Firstly, we notice an important and useful property of the abacus display of a multicore $\bm \mu$ lying in a core block $C$ of $\mathcal{H}_{r,n}$ and then we give some results about multicores not necessarily in a core block.

If $\bm\mu$ is a multipartition lying in a core block $C$ of $\mathcal{H}_{r,n}$ then, by the definition of a base tuple, there exists a multicharge $\bm{\mathrm{a}} = (a_1, \ldots, a_r)$ of $\mathcal{H}_{r,n}$ and at least one base tuple $(b_0, \ldots, b_{e-1})$ such that $\bm\mu$ has abacus display where for any $i \in I\setminus\{0\}$ and $j\in\{1, \ldots, r\}$
\begin{equation}\label{djivalues}
\delta^{j}_{i}(\bm{\mu}) \in \{b_i-b_{i-1}-1, b_i-b_{i-1}, b_i-b_{i-1}+1\},
\end{equation}
and, for $i=0$ and $j\in\{1, \ldots, r\}$
\begin{equation}\label{djivalues0}
\delta^{j}_{i}(\bm{\mu}) \in \{b_0-b_{e-1}-2, b_0-b_{e-1}-1, b_0-b_{e-1}\}.
\end{equation}
Notice that \eqref{djivalues} and \eqref{djivalues0} hold for all the multicores in the core block $C$ since the base tuple is an invariant of a core block.

Let $C$ be a core block. Fix $i \in I$ and a multicharge $\bm{\mathrm{a}} = (a_1, \ldots, a_r)$ such that \eqref{djivalues} and \eqref{djivalues0} hold. Let $(b_0,\ldots, b_{e-1})$ be the corresponding base tuple such that ${i}$ is as big as possible and $b_{i-1}$ is as small as possible between all the possible choices of base tuples corresponding to $\bm{\mathrm{a}}$. Then we can define
\begin{equation}\label{Kidef}
K_{{i}}=K_{{i}}(C) := \begin{cases}
b_{{i}}-b_{{i}-1}-1 \quad \text{if } {i} \in I\setminus \{0\};\\
b_{0}-b_{e-1}-2 \quad \text{if } {i}=0.
\end{cases}
%\text{ for any multicore }\bm\mu \in C.
\end{equation}
%Hence, the integer $K_{\bar{i}}$ is well defined for a core block.
If it is clear what ${i}$ we are referring to, we will simply write $K=K(C)$ instead of $K_{{i}}(C)$. Note that 
\begin{equation}\label{K_leqd_mu}
K\leq d(\bm\mu)
\end{equation}
for all $\bm\mu \in C$.

\begin{exe}\label{exKi}
Suppose $e=5$, $r=3$ and $\kappa=(0,3,1)$. Let $\bm\mu$ be the multicore $((4,3,1),(4,2^3),(3,2))$ which has abacus display with respect to the multicharge $\bm{\mathrm{a}}=(0,-2,1)$
\begin{center}
\begin{tabular}{c|c|c}
$      
        \begin{matrix}
        
        0 & 1 & 2 & 3 & 4 \\
        
        \bigvd & \bigvd & \bigvd & \bigvd & \bigvd \\

        \bigbd & \bigbd & \bigbd & \bigbd & \bigbd \\
        
        \bigbd & \bigbd & \bigbd & \bigbd & \bigbd \\
        
        \bigbd & \bigbd & \bignb & \bigbd & \bignb \\
        
        \bignb & \bigbd & \bignb & \bigbd & \bignb \\

        \bignb & \bignb & \bignb & \bignb & \bignb \\
        
        \bigvd & \bigvd & \bigvd & \bigvd & \bigvd
        
        \end{matrix}
$
&
$      
        \begin{matrix}

        0 & 1 & 2 & 3 & 4 \\

        \bigvd & \bigvd & \bigvd & \bigvd & \bigvd \\

        \bigbd & \bigbd & \bigbd & \bigbd & \bigbd \\
        
        \bigbd & \bigbd & \bigbd & \bigbd & \bignb \\
        
        \bignb & \bigbd & \bigbd & \bigbd & \bignb \\
        
        \bignb & \bigbd & \bignb & \bignb & \bignb \\

        \bignb & \bignb & \bignb & \bignb & \bignb \\
        
        \bigvd & \bigvd & \bigvd & \bigvd & \bigvd
        
        \end{matrix}
$
&
$      
        \begin{matrix}
        
         0 & 1 & 2 & 3 & 4 \\

        \bigvd & \bigvd & \bigvd & \bigvd & \bigvd \\

        \bigbd & \bigbd & \bigbd & \bigbd & \bigbd \\
        
        \bigbd & \bigbd & \bigbd & \bigbd & \bigbd \\
        
        \bigbd & \bigbd & \bigbd & \bigbd & \bignb \\
        
        \bignb & \bigbd & \bignb & \bigbd & \bignb \\

        \bignb & \bignb & \bignb & \bignb & \bignb \\
        
        \bigvd & \bigvd & \bigvd & \bigvd & \bigvd
        
        \end{matrix}
$.
\end{tabular}
\end{center}
Since $|\gamma^{jk}_{i}(\bm\mu)| \leq 1$ for all $j,k \in \{1, 2, 3\}$ and $i\in \{0, \ldots, 4\}$, $\bm \mu$ lies in a core block $C$ by Proposition \ref{charCoreBl}. Then we can define a base tuple for $\bm\mu$ and thus for $C$. In particular, we can consider the following two base tuples:
\begin{enumerate}
\item $(b_0, b_1, b_2, b_3, b_4)=(2,4,2,3,1)$;
\item $(b_0', b_1', b_2', b_3', b_4')=(2,3,2,3,1)$.
\end{enumerate}
Take $\bar{i}=0$. In order to define $K_0$  can choose either $(b_0, b_1, b_2, b_3, b_4)$ or $(b_0', b_1', b_2', b_3', b_4')$ because $b_0=b_0'$ and  $b_4=b_4'$, so we get $K_0=-1$.\\
Take $\bar{i}=1$. In order to define $K_1$ we need to choose $(b_0, b_1, b_2, b_3, b_4)=(2,4,2,3,1)$ because $b_1>b_1'$ and  $b_0=b_0'$, so we get $K_1=1$.\\
Take $\bar{i}=3$. In order to define $K_3$ we can choose either $(b_0, b_1, b_2, b_3, b_4)$ or $(b_0', b_1', b_2', b_3', b_4')$ because $b_3=b_3'$ and  $b_2=b_2'$, so we get $K_3=0$.
\end{exe}

%We will explain later the reason of this choice for the base tuple.

We now want to exhibit, for all multicores of $\mathcal{H}_{r,n}$, a sequence of multicores of non-increasing weight ending in a multicore lying in a core block. In order to do this, we need the following lemma adapted from the proof of Proposition 3.7 in \cite{Fay07} to the case of multicores.

\begin{lem}\label{seq_Fay}
If $\bm m$ is a multicore in a block of $\mathcal{H}_{r,n}$, then there is a sequence $\bm m=\bm m_0,\ldots, \bm m_u$ of multicores such that, for all $t=0,\ldots, u-1$, we have $\bm m_{t+1}=s_{il}^{jk}(\bm m_{t})$ for some $i,l,j,k$ and $\bm m_u$ lies in the core block of $\bm m$. Furthermore, $w(\bm m_{t+1})\leq w(\bm m_{t})$ for all $t=0,\ldots, u-1$. %and $w(\bm\lambda_u)<w(\bm\lambda_{u-1})$. 
\end{lem}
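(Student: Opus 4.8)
The plan is to induct on the weight $w(\bm m)$, which is a non-negative integer. Each step $\bm m_t \mapsto s_{il}^{jk}(\bm m_t)$ preserves the hub by Proposition~\ref{computew}, so every term of the sequence lies in a block sharing the hub of $\bm m$; by Theorem~\ref{corebleq} together with Proposition~\ref{hubblock}, the core block of $\bm m$ is the unique block of smallest weight among these. It therefore suffices to establish a single reduction step: \emph{if $\bm m$ is a multicore not lying in its core block, then there is a finite chain $\bm m = \bm m_0, \dots, \bm m_p$ with $\bm m_{t+1} = s_{il}^{jk}(\bm m_t)$, with $w(\bm m_{t+1}) \le w(\bm m_t)$ for all $t$, and with $w(\bm m_p) < w(\bm m)$.} Concatenating such chains and iterating drives the weight down to its minimum, at which point the multicore reached lies in the core block of $\bm m$; since all intermediate moves are weight-non-increasing, this is exactly the sequence required.

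For the reduction step I would fix a multicharge $\bm{\mathrm{a}}$ and record the lowest-bead levels $\ell_{ij} = \ell^{\bm{\mathrm{a}}}_{ij}$. By Proposition~\ref{computew} the move $s_{il}^{jk}$ alters the weight by $-r(\gamma_{il}^{jk}-2)$, so it lowers the weight precisely when $\gamma_{il}^{jk} \ge 3$ and preserves it precisely when $\gamma_{il}^{jk} = 2$. The point that makes the lemma non-trivial is that a multicore outside its core block may admit \emph{no} single weight-lowering move: all the $\gamma_{il}^{jk}$ can be at most $2$ even though, by Proposition~\ref{charCoreBl}, no choice of multicharge brings every $|\gamma_i^{jk}|$ down to at most $1$. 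One is then obliged to use weight-preserving moves ($\gamma_{il}^{jk}=2$) to slide beads around until some $\gamma_{il}^{jk}$ is pushed up to $3$, unlocking a genuine weight drop.

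To organise these slides I would compare $\bm m$ with a multicore $\bmu$ lying in the core block of $\bm m$, which has strictly smaller weight, and look at the integer array $Z = (\ell_{ij}(\bm m) - \ell_{ij}(\bmu))$. Equality of hubs forces the row sums of $Z$ to vanish, and choosing a suitable presentation of $\bmu$ makes its column sums vanish too, so $Z$ is a nonzero circulation on the grid of runners against components and decomposes into alternating cycles. A single $s_{il}^{jk}$ is exactly an elementary $2\times 2$ alternating move, so advancing one such cycle of $Z$ by one unit is realised by a string of these moves; since $\bmu$ is lighter, the net effect of advancing a cycle lowers $\tfrac{r}{2}\sum_{ij}\ell_{ij}^2$, that is, the weight. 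Scheduling the elementary moves of one cycle-advance so that no partial step raises the weight while the last step strictly lowers it produces exactly the chain demanded by the reduction step, and induction on $w$ then finishes the argument.

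The step I expect to be the main obstacle is this scheduling, together with the fact underlying it that a non-core multicore is never the weight-minimum among the configurations reachable from it --- equivalently, that the target $\bmu$ can always be centred in the sense of Proposition~\ref{charCoreBl}. I would attack the scheduling by traversing each cycle from a corner where $\ell_{ij}(\bm m)$ is most extreme relative to $\bmu$, so that the initial elementary moves are weight-preserving and successively raise the relevant $\gamma_{il}^{jk}$ until a strict decrease occurs; finiteness of the bead configurations of bounded spread, monitored by the monovariant $\sum_{ij}\ell_{ij}^2$, prevents this weight-preserving phase from cycling and forces it to terminate in a weight drop.
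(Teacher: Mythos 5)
First, a remark on the comparison itself: the paper does not prove Lemma~\ref{seq_Fay} at all --- it is stated as an adaptation of the proof of \cite[Proposition 3.7]{Fay07} --- so there is no in-paper argument to measure your proposal against; the real content has to come from Fayers' analysis of the quantities $\gamma_{il}^{jk}$. Your global scaffolding is sound: the moves preserve the hub, the core block is the unique weight-minimal block in the hub class, and (a genuinely useful observation) Proposition~\ref{computew}(2) shows that $w$ agrees with $\tfrac{r}{2}\sum_{i,j}\bigl(\ell^{\bm{\mathrm{a}}}_{ij}\bigr)^2$ up to an additive constant on the set of multicores with fixed hub and fixed multicharge representative, so minimising weight is minimising this quadratic over a fixed-margin class of integer matrices.

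The gap is precisely the step you flag as the main obstacle, and it is not a finishing detail --- it is the whole lemma. Two concrete problems. First, it is not true that advancing an arbitrary cycle of the decomposition of $Z=\ell(\bm m)-\ell(\bm\mu)$ lowers the weight; you need a sign-consistent (conformal) decomposition and the convexity of $x\mapsto x^2$ to guarantee that at least one constituent cycle is improving. That is repairable but unaddressed. Second, and more seriously, a one-unit advance along an improving cycle of length $2p$ with $p\ge 3$ cannot be factored into elementary moves $s_{il}^{jk}$ supported on the cycle alone: any factorisation into $p-1$ elementary moves passes a bead through ``chord'' positions $(i,j)$ off the cycle, and whether such a move has $\gamma_{il}^{jk}\ge 2$ depends on the values of $\ell_{ij}$ at those chords, which the negative-cycle inequality does not control. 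Starting at the most extreme corner constrains only cycle positions, not chords, so it does not deliver the claimed weight-non-increasing initial segment; and the proposed monovariant $\sum_{i,j}\ell_{ij}^2$ cannot certify termination of the weight-preserving phase, because up to a constant it \emph{is} the weight and is therefore unchanged throughout that phase. To close the argument you need either Fayers' explicit analysis of which move to perform when every $|\gamma_{il}^{jk}|\le 2$ yet the base-tuple condition of Theorem~\ref{corebleq} fails, or a genuinely different secondary potential that strictly decreases along suitably chosen $\gamma=2$ moves.
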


\begin{prop}\label{seq_mult_anye}
Let $\bm m$ be a multicore of $\mathcal{H}_{r,n}$. Then there exist a multicore $\bm\mu$ in the core block $C$ of $\bm m$ and a sequence of multicores
$$\bm m = \bm m_0, \bm m_1, \ldots, \bm m_{s-1}, \bm m_{s}= \bm \mu$$
such that:
\begin{itemize}
\item[1)] the core block of $\bm m_t$ is $C$ for all $t=0,\ldots, s$;

\item[2)] $\bm m_{t+1}=s^{jk}_{il}(\bm m_t)$ for some $j,k\in\{1,\ldots ,r\}$, $i,l \in \{0,1, \ldots, e-1\}$ for all $t=0,\ldots, s-1$;

\item[3)] there exists $0 \leq v \leq s$ such that 
\begin{itemize}
\item[i)] 
$w(\bm m_{t+1})< w(\bm m_t)$ for all $t=0,\ldots, v-1$ and $w(\bm m_{t+1})\leq w(\bm m_t)$ for all $t=v,\ldots, s-1$;
\item[ii)] $|\gamma_{il}^{jk}(\bm m_v)| \leq 2$ for all $i,l$ and $j,k$.
\end{itemize}
\end{itemize}
%We say that $\bm \mu$ is the multicore in the core block $C$ corresponding to $\bm m$.
\end{prop}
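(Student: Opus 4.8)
The plan is to build the sequence in two phases: a strictly weight-decreasing phase obtained by greedily performing weight-lowering moves $s_{il}^{jk}$, followed by a non-increasing phase taken directly from Lemma~\ref{seq_Fay}. Everything is driven by the dictionary in Proposition~\ref{computew}: since $w(s_{il}^{jk}(\bm m_t))=w(\bm m_t)-r(\gamma_{il}^{jk}(\bm m_t)-2)$, the move $s_{il}^{jk}$ strictly lowers the weight exactly when $\gamma_{il}^{jk}(\bm m_t)\geq 3$, preserves it when $\gamma_{il}^{jk}(\bm m_t)=2$, and raises it when $\gamma_{il}^{jk}(\bm m_t)\leq 1$. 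Part (1) of the same proposition says every $s_{il}^{jk}$ fixes the hub, so --- since the hub determines the block and hence the core block by Proposition~\ref{hubblock} --- condition 1) is automatic for any sequence of such moves: each $\bm m_t$ has the hub of $\bm m$ and therefore core block $C$.

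First I would run the decreasing phase. Starting from $\bm m_0=\bm m$, as long as there are indices with $\gamma_{il}^{jk}(\bm m_t)\geq 3$ I set $\bm m_{t+1}=s_{il}^{jk}(\bm m_t)$ for one such choice; such a move is always available, since every runner of every component carries a lowest bead that can be slid. Each step lowers the weight by $r(\gamma_{il}^{jk}-2)\geq r\geq 1$, and as weights of multipartitions are non-negative integers this phase must stop after finitely many steps, say at $\bm m_v$, giving $w(\bm m_{t+1})<w(\bm m_t)$ for $t=0,\ldots,v-1$. The fact that the phase has stopped means no move lowers the weight, i.e. $\gamma_{il}^{jk}(\bm m_v)\leq 2$ for all $i,l,j,k$; interchanging $j$ and $k$ and using the antisymmetry $\gamma_{il}^{kj}=-\gamma_{il}^{jk}$ (immediate from $\gamma_i^{kj}=-\gamma_i^{jk}$) then forces $\gamma_{il}^{jk}(\bm m_v)\geq -2$ as well. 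Hence $|\gamma_{il}^{jk}(\bm m_v)|\leq 2$, which is precisely condition 3ii).

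To finish, I would extend the sequence from $\bm m_v$ to a multicore in $C$ without raising the weight by applying Lemma~\ref{seq_Fay} to the multicore $\bm m_v$: this produces moves $s_{il}^{jk}$ of non-increasing weight ending in a multicore $\bm\mu$ in the core block of $\bm m_v$, which is $C$ since the hub is preserved throughout. Concatenating the two phases and letting $s$ be the total length yields the required sequence: 2) holds move by move, 1) by hub-invariance, and 3i) because the first $v$ steps strictly decrease the weight while the rest are non-increasing by Lemma~\ref{seq_Fay} (note that 3i) asks only for $\leq$ after $v$, so it does not matter whether Lemma~\ref{seq_Fay} reintroduces strict drops). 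I expect the only delicate point to be condition 3ii): one must check that ``no single weight-lowering move exists at $\bm m_v$'' bounds every $\gamma_{il}^{jk}(\bm m_v)$ on both sides, which is exactly where the antisymmetry of $\gamma_{il}^{jk}$ in $(j,k)$ and the fact that every $s_{il}^{jk}$ is a legal move are both needed.
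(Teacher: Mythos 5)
Your proposal is correct and follows essentially the same route as the paper: a greedy strictly-decreasing phase using moves with $\gamma_{il}^{jk}\geq 3$, the antisymmetry of $\gamma_{il}^{jk}$ to upgrade the one-sided bound to $|\gamma_{il}^{jk}(\bm m_v)|\leq 2$, and then Lemma~\ref{seq_Fay} to finish with non-increasing weight. You additionally make explicit the termination argument and the hub-invariance justification for condition 1), which the paper leaves implicit.
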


\begin{proof}
%\todo{More formally this proof should be done with induction on $w(\lambda) - w(C)$.}
%\todo{We should give another characterization of the core block in terms of $\gamma_{il}^{jk}\leq 2$ or something similar}
Let $\bm m_0 := \bm m$ be a multicore of $\mathcal{H}_{r,n}$. 
Then, apply the following procedure for $t\geq 0$.
\begin{enumerate}
\item[1.] Calculate $\gamma_i^{jk}(\bm m_t)$ for all $i\in\{0, \ldots, e-1\}$ and $j,k \in \{1, \ldots, r\}$;
\item[2.] If there is a choice of $i,l$ and $j,k$ such that $\gamma_{il}^{jk}(\bm m_t)\geq 3$, set $\bm m_{t+1}=s_{il}^{jk}(\bm m_t)$. By Proposition \ref{computew} we have 
$$w(\bm m_{t+1})< w(\bm m_t).$$
\item[3.] Repeat this step until we have $\bm m_{t+1}$ with $\gamma_{il}^{jk}(\bm m_{t+1}) \leq 2$ for all $i,l$ and $j,k$.
\end{enumerate}

Suppose that we stop for $t+1=v$. Notice that $\gamma_{il}^{jk}(\bm m_v) \leq 2$ for all $i,l$ and $j,k$ implies $|\gamma_{il}^{jk}(\bm m_v)| \leq 2$ for all $i,l$ and $j,k$. Indeed, $$\gamma_{il}^{jk}(\bm m_v) = -\gamma_{li}^{jk}(\bm m_v)= - \gamma_{il}^{kj}(\bm m_v).$$

{Now, if $\bm m_v$ is not in the core block $C$, apply Lemma \ref{seq_Fay} until we get a multicore $\bm\mu$ in the core block $C$.}
%Now, let $w(\bm m_v)=w(C)+hr$ with $h\geq 0$. Then, proceeding by induction on $h$ we have the following.
%\begin{enumerate}
%\item If $h=0$, then $w(\bm m_v)=w(C)$; therefore $\bm m_v$ is in the core block $C$ and so $\bm\mu:=\bm m_v$.
%\item If $h>0$, then by Lemma \ref{seq_Fay} there is a sequence of multicores $\bm m_v, \bm m_{v+1}, \ldots, \bm m_u$ such that $\bm m_{t+1}=s_{il}^{jk}(\bm m_t)$ for some $i,l,j,k$, $w(\bm m_{t+1})\leq w(\bm m_t)$ for $t=v, \ldots, u-2$ and $w(\bm m_{u})< w(\bm m_{u-1})$. Therefore, $w(\bm m_{u})=w(C)+h'r$ for some $0 \leq h' < h$. Hence, by induction hypothesis there exists a sequence of multicores $\bm m_u, \bm m_{u+1}, \ldots, \bm m_s$ with $\bm m_s$ in the core block $C$ and $w(\bm m_{t+1})\leq w(\bm m_t)$ for all $t=u, \ldots, s-1$.
%\end{enumerate}
\end{proof}

%\begin{cor}\label{cor_seq_mult_anye}
%With the same notation as in Proposition \ref{seq_mult_anye}, let $\bm \mu =  s^{j_{s}k_{s}}_{i_{s}l_{s}}\ldots s^{j_1k_1}_{i_1l_1}(\bm m)$. Then, 
%$$\bm m = s^{j_1k_1}_{l_1i_1} \ldots s^{j_{s}k_{s}}_{l_si_s}(\bm \mu),$$
%where
%\begin{enumerate}
%\item[1)] $\widetilde{\bm m}_0:=\bm\mu$ and $\widetilde{\bm m}_t:=\bm m_{s-t}=s^{j_{s-t+1}k_{s-t+1}}_{l_{s-t+1}i_{s-t+1}} \ldots s^{j_sk_s}_{l_si_s}(\bm \mu)$ for $1\leq t \leq s$;
%\item[2)] $w(\widetilde{\bm m}_{t+1})\geq w(\widetilde{\bm m}_t)$ for all $t=0,\ldots, v-1$ and $w(\widetilde{\bm m}_{t+1})>w(\widetilde{\bm m}_{t})$ for $v\leq t \leq s-1$;
%\item[3)] $|\gamma_{il}^{jk}(\widetilde{\bm m}_v)| \leq 2$ for all $i,l$ and $j,k$.
%\end{enumerate}
%\end{cor}
%
%\begin{proof}
%This follows directly by Proposition \ref{seq_mult_anye} and by definition of $s^{jk}_{il}$.
%\end{proof}

Before stating the main result, we need some preliminary lemmas.

\begin{lem}\label{delta_interval}
Suppose that $\bm m$ is a multicore such that $|\gamma_{il}^{jk}(\bm m)|\leq 2$ for all $i,l \in I$ and $j,k\in\{1, \ldots r\}$. Fix $\bar{i}\in I$, and let $d$ be the integer such that $d(\bm m) = d-1$. Then $$\delta_{\bar{i}}^j(\bm m) \in \{d-1, d, d+1\} \text{ for all } j \in \{1, \ldots, r\}.$$ 
\end{lem}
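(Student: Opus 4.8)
Given multicore $\bm{m}$ with $|\gamma_{il}^{jk}(\bm{m})| \leq 2$ for all $i,l,j,k$. Fix $\bar{i}$, let $d$ be the integer with $d(\bm{m}) = d-1$ (so $\min_j \delta_{\bar{i}}^j(\bm{m}) = d-1$). Then $\delta_{\bar{i}}^j(\bm{m}) \in \{d-1, d, d+1\}$ for all $j$.

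Need to show: all $\delta_{\bar{i}}^j$ lie in an interval of length 2, given the constraint on $\gamma_{il}^{jk}$.

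**Key relationships:**

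From the Corollary proof: $\delta_i^j(\bm{\lambda}) = \ell_{ij}^{\bm{a}} - \ell_{i-1,j}^{\bm{a}}$.

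So $\delta_{\bar{i}}^j - \delta_{\bar{i}}^k = (\ell_{\bar{i},j} - \ell_{\bar{i},k}) - (\ell_{\bar{i}-1,j} - \ell_{\bar{i}-1,k}) = \gamma_{\bar{i}}^{jk} - \gamma_{\bar{i}-1}^{jk} = \gamma_{\bar{i},\bar{i}-1}^{jk}$.

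Wait, let me check the definition: $\gamma_{il}^{jk} = \gamma_i^{jk} - \gamma_l^{jk}$ where $\gamma_i^{jk} = \ell_{ij} - \ell_{ik}$.

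So $\delta_{\bar{i}}^j - \delta_{\bar{i}}^k = \gamma_{\bar{i}}^{jk} - \gamma_{\bar{i}-1}^{jk} = \gamma_{\bar{i}, \bar{i}-1}^{jk}$.

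By hypothesis $|\gamma_{\bar{i}, \bar{i}-1}^{jk}| \leq 2$, so $|\delta_{\bar{i}}^j - \delta_{\bar{i}}^k| \leq 2$ for all $j,k$.

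**Now:** Let $d-1 = \min_j \delta_{\bar{i}}^j$. Achieved by some $k_0$. Then for any $j$: $\delta_{\bar{i}}^j - (d-1) = \delta_{\bar{i}}^j - \delta_{\bar{i}}^{k_0} \leq 2$, so $\delta_{\bar{i}}^j \leq d+1$. And $\delta_{\bar{i}}^j \geq d-1$ by minimality. So $\delta_{\bar{i}}^j \in \{d-1, d, d+1\}$.

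The proof is essentially immediate from the identity relating $\delta$ differences to $\gamma_{il}^{jk}$.

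---

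The plan is to reduce everything to the identity, already recorded in the proof of Corollary~\ref{k-m<=2}, that expresses $\delta_{\bar i}^j(\bm m)$ as a difference of bead-levels: with respect to the multicharge $\bm{\mathrm a}$ witnessing the hypothesis, $\delta_{\bar i}^j(\bm m) = \ell^{\bm{\mathrm a}}_{\bar i j}(\bm m) - \ell^{\bm{\mathrm a}}_{\bar i - 1, j}(\bm m)$. Subtracting the corresponding expression for a second component $k$ and regrouping (exactly as in that corollary) gives
$$
\delta_{\bar i}^j(\bm m) - \delta_{\bar i}^k(\bm m)
= \bigl(\ell^{\bm{\mathrm a}}_{\bar i j} - \ell^{\bm{\mathrm a}}_{\bar i k}\bigr) - \bigl(\ell^{\bm{\mathrm a}}_{\bar i -1, j} - \ell^{\bm{\mathrm a}}_{\bar i -1, k}\bigr)
= \gamma^{jk}_{\bar i}(\bm m) - \gamma^{jk}_{\bar i - 1}(\bm m)
= \gamma^{jk}_{\bar i,\, \bar i - 1}(\bm m).
$$
By the standing hypothesis $|\gamma^{jk}_{il}(\bm m)| \leq 2$ (applied with $l = \bar i - 1$), this yields the uniform bound $|\delta_{\bar i}^j(\bm m) - \delta_{\bar i}^k(\bm m)| \leq 2$ for all $j,k \in \{1,\ldots,r\}$.

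Next I would feed in the definition of $d$. Since $d(\bm m) = d_{\bar i}(\bm m) = \min\{\delta_{\bar i}^j(\bm m) \mid j\} = d-1$, there is some index $k_0$ with $\delta_{\bar i}^{k_0}(\bm m) = d-1$, and by minimality $\delta_{\bar i}^j(\bm m) \geq d-1$ for every $j$. Combining this lower bound with the difference bound above, applied to the pair $(j,k_0)$, gives $\delta_{\bar i}^j(\bm m) - (d-1) = \delta_{\bar i}^j(\bm m) - \delta_{\bar i}^{k_0}(\bm m) \leq 2$, hence $\delta_{\bar i}^j(\bm m) \leq d+1$. Therefore $\delta_{\bar i}^j(\bm m) \in \{d-1, d, d+1\}$ for all $j$, as claimed.

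There is essentially no obstacle here: the content is entirely the translation of the hypothesis on the level-differences $\gamma^{jk}_{il}$ into a statement about the hub contributions $\delta^j_{\bar i}$, and that translation is the bookkeeping identity $\delta_{\bar i}^j - \delta_{\bar i}^k = \gamma^{jk}_{\bar i,\,\bar i-1}$. The only point requiring a word of care is the handling of the wrap-around case $\bar i = 0$, where $\bar i - 1$ should be read as $e-1$ and the bead-level indexing must respect the level shift built into the $i=0$ convention (the extra $-2$ appearing in \eqref{djivalues0} versus the $-1$ in \eqref{djivalues} reflects this); since the hypothesis bounds $\gamma^{jk}_{il}$ for \emph{all} $i,l \in I$, including $l = e-1$, the same one-line argument goes through uniformly, so I would simply note that the identity and the bound hold verbatim with indices read modulo $e$.
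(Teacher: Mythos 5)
Your proof is correct and follows essentially the same route as the paper's: the paper likewise rewrites $|\gamma^{jk}_{(\bar i-1)\bar i}(\bm m)|$ as $|\delta^k_{\bar i}(\bm m)-\delta^j_{\bar i}(\bm m)|\leq 2$ and then invokes the definition of $d(\bm m)=d-1$ to pin all $\delta^j_{\bar i}(\bm m)$ to $\{d-1,d,d+1\}$. Your extra remark on the $\bar i=0$ wrap-around is a harmless (and reasonable) addition that the paper leaves implicit.
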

\begin{proof}
Consider $\bm m$ a multicore such that $|\gamma_{il}^{jk}(\bm m)|\leq 2$ for all $i,l \in I$ and $j,k\in\{1, \ldots r\}$. %Denote by $\ell^{\kappa}_{ij}(\bm m)$ the level of the last bead in runner $i$ of the abacus display $m^{(j)}$ with respect to the multicharge $\kappa$.
Fix $\bar{i} \in I$. %We can find a multicharge $\bm{\mathrm{a}}$ such that $\ell^{\bm{\mathrm{a}}}_{(\bar{i}-1)j}(\bm m)=0$ for all $j \in \{1, \ldots, r\}$.
Then, since $|\gamma_{il}^{jk}(\bm m)|\leq 2$ for all $i,l \in I$ and $j,k\in\{1, \ldots r\}$, we have that
$$|\gamma_{(\bar{i}-1)\bar{i}}^{jk}(\bm m)| = |\delta_{\bar{i}}^{k}(\bm m) - \delta_{\bar{i}}^{j}(\bm m)| 
%= |\ell^{\bm{\mathrm{a}}}_{\bar{i}k}(\bm m) - \ell^{\bm{\mathrm{a}}}_{\bar{i}j}(\bm m)|
\leq 2.$$
Hence, since $d(\bm m) = d-1$, we get $\delta_{\bar{i}}^j(\bm m) \in \{d-1, d, d+1\}$ for all $j \in \{1, \ldots, r\}$.
\end{proof}

\begin{lem}\label{l'<=l+1}
Let $\bm m$ be a multicore with core block $C$ and such that $|\gamma_{il}^{jk}(\bm m)|\leq 2$ for all $i,l \in I$ and $j,k\in\{1, \ldots r\}$. Suppose that $\bm\mu$ is a multicore in the core block $C$. %corresponding to $\bm m$. 
Fix $\bar{i}\in I$, and let $d(\bm m) = d-1$ and $d(\bm \mu) = d'-1$ for some integers $d$ and $d'$. Then $d'\leq d+1$.
\end{lem}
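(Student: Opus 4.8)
The plan is to route the comparison through the common value of the $\bar{i}$-component of the hub. Since $C$ is the core block of $\bm m$, it shares the hub of $\bm m$, and the hub is a block invariant (Proposition \ref{hubblock}); as $\bm\mu$ lies in $C$, it has this same hub. In particular, writing
$$S := \delta_{\bar{i}}(\bm m) = \sum_{j=1}^r \delta_{\bar{i}}^j(\bm m) = \sum_{j=1}^r \delta_{\bar{i}}^j(\bm\mu) = \delta_{\bar{i}}(\bm\mu),$$
I would bound this single integer $S$ from above using the structure of $\bm m$ and from below using only the minimality that defines $d(\bm\mu)$, and then extract the desired inequality.

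For the upper bound I would invoke Lemma \ref{delta_interval} for $\bm m$, whose hypothesis $|\gamma_{il}^{jk}(\bm m)|\le 2$ is assumed: it gives $\delta_{\bar{i}}^j(\bm m)\in\{d-1,d,d+1\}$ for every $j$. The essential observation is that the minimal value $d-1=d(\bm m)$ is actually attained by some component, so at most $r-1$ of the $r$ summands can equal the maximum $d+1$. This yields
$$S \le (r-1)(d+1) + (d-1) = r(d+1) - 2.$$
For the lower bound I would use nothing more than the fact that $d(\bm\mu)=d'-1$ is the minimum of the numbers $\delta_{\bar{i}}^j(\bm\mu)$, so every summand is at least $d'-1$ and hence $S \ge r(d'-1)$.

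Putting the two estimates together gives $r(d'-1)\le r(d+1)-2$, that is, $d' \le d + 2 - \tfrac{2}{r}$. Since $r\ge 1$ we have $\tfrac{2}{r}>0$, so $d' < d+2$, and integrality of $d$ and $d'$ forces $d'\le d+1$, as required. I expect the genuine content to lie entirely in the upper bound: the crude estimate $S\le r(d+1)$ would only give $d'\le d+2$, and one really needs to exploit that the minimum $d-1$ is attained — trimming $2$ off the bound — together with integrality to sharpen this to $d'\le d+1$.
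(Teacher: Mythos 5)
Your proof is correct and takes essentially the same route as the paper's: both equate $\delta_{\bar i}(\bm m)=\delta_{\bar i}(\bm\mu)$ via the common hub, constrain the summands with Lemma \ref{delta_interval}, and exploit the fact that the minimum $d-1$ is actually attained for $\bm m$ (your ``trim $2$ off the bound'' is exactly the $a>0$ that drives the paper's contradiction). The only cosmetic differences are that you conclude directly via integrality instead of by contradiction, and you justify the equal-hub claim from the definition of the core block rather than through the bead-moving sequence of Proposition \ref{seq_mult_anye}.
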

\begin{proof}
By Lemma \ref{delta_interval}, we know that $\delta_{\bar{i}}^j(\bm m) \in \{d-1, d, d+1\}$ and $\delta_{\bar{i}}^j(\bm\mu) \in \{d'-1, d', d'+1\}$  for all $j \in \{1, \ldots, r\}$.
Let $a$, $b$, and $c$ be the number of $\delta_{\bar{i}}^j(\bm m)$ equal respectively to $d-1$, $d$, and $d+1$. Let $a'$, $b'$, and $c'$ be the number of $\delta_{\bar{i}}^j(\bm\mu)$ equal respectively to $d'-1$, $d'$, and $d'+1$. Notice that $a>0$ and $a'>0$ by definition of $d-1$ and $d'-1$. By Proposition \ref{seq_mult_anye}, we can go from the multicore $\bm m$ to the multicore $\bm\mu$ in the core block $C$ via a sequence of multicores $\bm m_t$ such that $\bm m_{t+1}=s_{il}^{jk}(\bm m_t)$ for some $i,l \in I$ and $j,k \in \{1, \ldots, r\}$. By point (1) of Proposition \ref{computew}, we know that each multicore $\bm m_t$ occuring in this sequence has the same hub of $\bm m$, then $\bm m$ and $\bm \mu$ have the same hub. So,
\begin{equation}\label{samehub}
a(d-1)+b(d)+c(d+1)=a'(d'-1)+b'(d')+c'(d'+1),
\end{equation}
where $a+b+c=a'+b'+c'=r$.
Suppose for a contradiction that $d'>d+1$. Then, looking at \eqref{samehub} we have
\begin{align*}
\text{LHS} &\leq r(d+1) \text{ with equality if and only if } a=b=0;\\
\text{RHS} &\geq a'(d+1)+b'(d+2)+c'(d+3)\geq r(d+1) \text{ with equality if and only if } b'=c'=0;
\end{align*}
We must have equality in both terms, but this is a contradiction since $a>0$.
\end{proof}

%\begin{cor}\todo{Maybe it is worth defining the minimum with d instead of using l and l'}
%In the same notation of Lemma \ref{l'<=l+1}, set 
%\begin{align*}
%d &:= \min\{\delta_i^j(\bm m)\text{ }|\text{ }j \in \{1, \ldots, r\}\};\\
%d' &:= \min\{\delta_i^j(\bm\mu)\text{ }|\text{ }j \in \{1, \ldots, r\}\}.
%\end{align*}
%Then $d' \leq d+1$. 
%\end{cor}
%
%\begin{proof}
%This is a consequence of Lemma \ref{l'<=l+1}. Indeed, if $l'\leq l+1$, then $d'=l'-1 \leq l = d+1$.
%\end{proof}

\begin{lem}\label{d>=d'-2}
Let $\bm m$ be a multicore of $\mathcal{H}_{r,n}$ and $\bm m'= s_{il}^{jk}(\bm m)$ for some $i,l,j,k$. Fix $\bar{i}\in I$. Then $d(\bm m')\geq d(\bm m)-2$. Moreover, if $\gamma_{il}^{jk}(\bm m) = 1$, then $d(\bm m')\geq d(\bm m)-1$.
\end{lem}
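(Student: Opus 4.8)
The plan is to convert everything into bead levels on the abacus and track the (purely local) effect of $s_{il}^{jk}$. Fix a multicharge $\bm{\mathrm{a}}$ realising the display. Since $\bm m$ is a multicore, on each runner the beads are packed upward, so the display is determined by the levels $\ell^{\bm{\mathrm{a}}}_{ij}(\bm m)$ of the lowest beads, and a direct count of removable minus addable $\bar i$-nodes on component $j$ (exactly the count used in the proof of Corollary \ref{k-m<=2}) gives
$$
\delta_{\bar i}^{j}(\bm m) = \ell^{\bm{\mathrm{a}}}_{\bar i, j}(\bm m) - \ell^{\bm{\mathrm{a}}}_{\bar i - 1, j}(\bm m) - \epsilon_{\bar i},
$$
where $\epsilon_{\bar i}=1$ if $\bar i = 0$ and $\epsilon_{\bar i}=0$ otherwise (the extra $-1$ being the wrap-around between runners $0$ and $e-1$), with indices read modulo $e$. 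By definition $s_{il}^{jk}$ changes exactly four levels, sending $\ell^{\bm{\mathrm{a}}}_{ij}\mapsto \ell^{\bm{\mathrm{a}}}_{ij}-1$, $\ell^{\bm{\mathrm{a}}}_{lj}\mapsto \ell^{\bm{\mathrm{a}}}_{lj}+1$, $\ell^{\bm{\mathrm{a}}}_{lk}\mapsto \ell^{\bm{\mathrm{a}}}_{lk}-1$, $\ell^{\bm{\mathrm{a}}}_{ik}\mapsto \ell^{\bm{\mathrm{a}}}_{ik}+1$, and fixing all others. Hence $\delta_{\bar i}^{j'}(\bm m')=\delta_{\bar i}^{j'}(\bm m)$ for every $j'\notin\{j,k\}$, and only the $j$- and $k$-components of the hub at position $\bar i$ can move.

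First I would read off the two possible changes. By the displayed identity, $\delta_{\bar i}^{j}(\bm m') - \delta_{\bar i}^{j}(\bm m)$ is the difference of two perturbations each in $\{-1,0,+1\}$ (those of $\ell^{\bm{\mathrm{a}}}_{\bar i, j}$ and $\ell^{\bm{\mathrm{a}}}_{\bar i - 1,j}$), hence lies in $\{-2,\dots,2\}$; likewise for the $k$-component. A one-line case check on whether $\bar i$ or $\bar i - 1$ coincides with $i$ or $l$ shows that the value $-2$ is attained for the $j$-component only when $(i,l)=(\bar i, \bar i - 1)$, and for the $k$-component only when $(i,l)=(\bar i - 1, \bar i)$. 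Since the unchanged components satisfy $\delta_{\bar i}^{j'}(\bm m')=\delta_{\bar i}^{j'}(\bm m)\geq d(\bm m)$ and the two moved components drop by at most $2$, every $\delta_{\bar i}^{j'}(\bm m')$ is $\geq d(\bm m)-2$, giving $d(\bm m')\geq d(\bm m)-2$.

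For the refinement, I would note that a component can reach the value $d(\bm m)-2$ only if it started at the minimum $d(\bm m)$ and dropped by exactly $2$. For the $j$-component this forces $(i,l)=(\bar i, \bar i - 1)$, and then the displayed level identity together with $\gamma_{il}^{jk}=\gamma_i^{jk}-\gamma_l^{jk}$ yields
$$
\gamma_{il}^{jk}(\bm m)=\gamma_{\bar i, \bar i - 1}^{jk}(\bm m)=\delta_{\bar i}^{j}(\bm m)-\delta_{\bar i}^{k}(\bm m)\leq 0,
$$
the last inequality because $\delta_{\bar i}^{j}(\bm m)=d(\bm m)$ is the minimum; this contradicts $\gamma_{il}^{jk}(\bm m)=1$. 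The symmetric computation, now with $(i,l)=(\bar i - 1, \bar i)$ and giving $\gamma_{il}^{jk}(\bm m)=\delta_{\bar i}^{k}(\bm m)-\delta_{\bar i}^{j}(\bm m)\leq 0$, rules out the $k$-component dropping to $d(\bm m)-2$. Hence under $\gamma_{il}^{jk}(\bm m)=1$ no component reaches $d(\bm m)-2$, so $d(\bm m')\geq d(\bm m)-1$.

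The hard part is the bookkeeping behind the level identity and the claim about when $-2$ occurs: one must pin down exactly which of the four perturbed levels feed into $\delta_{\bar i}^{j}$ and $\delta_{\bar i}^{k}$, and verify that the wrap-around constant $\epsilon_{\bar i}$ at $\bar i = 0$ cancels in $\gamma$ (which is what makes the argument uniform in $\bar i$). Once that is settled, everything reduces to the minimality of $d(\bm m)$ and the identity $\gamma_{\bar i, \bar i - 1}^{jk}(\bm m)=\delta_{\bar i}^{j}(\bm m)-\delta_{\bar i}^{k}(\bm m)$.
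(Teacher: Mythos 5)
Your proposal is correct and follows essentially the same route as the paper: both arguments reduce $\delta_{\bar i}^{j'}$ to the level difference $\ell^{\bm{\mathrm{a}}}_{\bar i,j'}-\ell^{\bm{\mathrm{a}}}_{\bar i-1,j'}$ (up to the wrap-around constant), observe that a drop of $2$ occurs only when $\{i,l\}=\{\bar i-1,\bar i\}$ with the sign fixed by the ordering, and then use the identity $\gamma_{(\bar i-1)\bar i}^{jk}=\delta_{\bar i}^k-\delta_{\bar i}^j$ together with minimality of $d(\bm m)$ to rule out a drop to $d(\bm m)-2$ when $\gamma_{il}^{jk}(\bm m)=1$. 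Your write-up is in fact slightly more careful than the paper's (it treats both orderings of $(i,l)$ explicitly and makes the level bookkeeping precise), but it is the same argument.
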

\begin{proof}
The fact that $d(\bm m')\geq d(\bm m)-2$ follows from the definition of $s_{il}^{jk}$ and that $|\delta_{\bar{i}}^j(\bm m)-\delta_{\bar{i}}^j(\bm m')|=2$ if and only if $\{i,l\}=\{\bar{i}-1,\bar{i}\}$.

Suppose $\gamma_{il}^{jk}(\bm m) = 1$. Then,
\begin{itemize}
\item if $\{i,l\}\neq \{\bar{i}-1,\bar{i}\}$, then $d(\bm m')\geq d(\bm m)-1$ since the only case in which $\delta_{\bar{i}}^j(\bm m')$ decreases by $2$ with respect to $\delta_{\bar{i}}^j(\bm m)$ is when $\{i,l\}=\{\bar{i}-1,\bar{i}\}$;
\item if $\{i,l\}=\{\bar{i}-1,\bar{i}\}$, then 
$$\gamma_{\bar{i}-1,\bar{i}}^{jk}(\bm m) = 1 \Leftrightarrow \delta_{\bar{i}}^k(\bm m)-\delta_{\bar{i}}^j(\bm m)=1 \Leftrightarrow \delta_{\bar{i}}^k(\bm m)=\delta_{\bar{i}}^j(\bm m)+1.$$
Thus,
$\delta_{\bar{i}}^k(\bm m')=\delta_{\bar{i}}^k(\bm m)+2$ and $\delta_{\bar{i}}^k(\bm m')=\delta_{\bar{i}}^k(\bm m)-1$. Hence, 
$d(\bm m')\geq d(\bm m)-1$.
\end{itemize}
\end{proof}

\begin{lem}\label{d>=d'-h}
Let $\bm m$ be a multicore of $\mathcal{H}_{r,n}$. Suppose that $\bm m = \bm m_0, \bm m_1, \ldots, \bm m_v$ is a sequence of multicores such that $\bm m_{t+1}=s_{il}^{jk}(\bm m_{t})$ for some $i,l,j,k$ and $w(\bm m_{t+1})<w(\bm m_t)$ for all $t=0,\ldots, v-1$.  Let $w(\bm m) = w(\bm m_v)+hr$ with $h>0$. Then $d(\bm m)\geq d(\bm m_v)-h$. 
\end{lem}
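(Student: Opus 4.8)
The plan is to establish a sharp one-step estimate and then telescope it along the sequence, converting the accumulated bound into the weight difference via Proposition \ref{computew}. Fix $\bar{i}\in I$, so that $d=d_{\bar{i}}$ throughout, and for each $t$ write $\gamma_t:=\gamma_{il}^{jk}(\bm m_t)$ for the quadruple $(i,l,j,k)$ used to form $\bm m_{t+1}=s_{il}^{jk}(\bm m_t)$. By Proposition \ref{computew}(2) we have $w(\bm m_{t+1})=w(\bm m_t)-r(\gamma_t-2)$, so the hypothesis $w(\bm m_{t+1})<w(\bm m_t)$ forces $\gamma_t\geq 3$ for every $t$.

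The heart of the argument is the one-step inequality
\[
d(\bm m_{t+1})\leq d(\bm m_t)+(\gamma_t-2).
\]
I would read this off from Lemma \ref{d>=d'-2} applied to the inverse move, rather than redoing a case analysis. Since $s_{li}^{jk}$ undoes $s_{il}^{jk}$ at the level of multicores, we have $\bm m_t=s_{li}^{jk}(\bm m_{t+1})$, and Lemma \ref{d>=d'-2} (with $\bm m_{t+1}$ in the role of the base multicore) gives $d(\bm m_t)\geq d(\bm m_{t+1})-2$, i.e.\ $d(\bm m_{t+1})\leq d(\bm m_t)+2$. A direct computation on the levels $\ell^{\bm{\mathrm{a}}}_{\bullet\bullet}$ shows that applying $s_{il}^{jk}$ lowers $\gamma_{il}^{jk}$ by exactly $4$, whence $\gamma_{li}^{jk}(\bm m_{t+1})=-\gamma_{il}^{jk}(\bm m_{t+1})=4-\gamma_t$. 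Thus when $\gamma_t=3$ we get $\gamma_{li}^{jk}(\bm m_{t+1})=1$, and the refined part of Lemma \ref{d>=d'-2} sharpens the bound to $d(\bm m_{t+1})\leq d(\bm m_t)+1=d(\bm m_t)+(\gamma_t-2)$; when $\gamma_t\geq 4$ the crude bound already yields $d(\bm m_{t+1})\leq d(\bm m_t)+2\leq d(\bm m_t)+(\gamma_t-2)$. In either case the one-step inequality holds.

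Summing the one-step inequality over $t=0,\ldots,v-1$ telescopes the left-hand side to $d(\bm m_v)-d(\bm m)$, while Proposition \ref{computew}(2) gives $\sum_{t=0}^{v-1} r(\gamma_t-2)=w(\bm m)-w(\bm m_v)=hr$, hence $\sum_{t=0}^{v-1}(\gamma_t-2)=h$. Therefore $d(\bm m_v)-d(\bm m)\leq h$, which is exactly the claimed $d(\bm m)\geq d(\bm m_v)-h$.

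I expect the main obstacle to be the refinement at $\gamma_t=3$: the blunt bound from Lemma \ref{d>=d'-2} only produces an increase of $2$ per step, which is too weak precisely when $\gamma_t=3$, where an increase of at most $1$ is needed to stay within $\gamma_t-2$. The resolution is the identity $\gamma_{li}^{jk}(\bm m_{t+1})=4-\gamma_t$, which makes the $\gamma=1$ case of Lemma \ref{d>=d'-2} kick in exactly when $\gamma_t=3$. One should also check the bookkeeping for $\bar{i}=0$, where $\delta^j_0$ carries an extra constant shift; since that shift is independent of $j$ it affects neither the one-step differences nor the telescoping, so the same argument applies verbatim.
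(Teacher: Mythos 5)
Your proof is correct and follows essentially the same route as the paper's: the one-step estimate comes from Lemma \ref{d>=d'-2} applied to the inverse move $\bm m_t = s_{li}^{jk}(\bm m_{t+1})$, with the refined $\gamma=1$ case of that lemma handling exactly the steps where $\gamma_t = 3$ (via $\gamma_{li}^{jk}(\bm m_{t+1}) = 4-\gamma_t$, which the paper encodes as $h = 2-\gamma_{li}^{jk}(\bm m_1)$) and the crude bound handling $\gamma_t \geq 4$. The only cosmetic difference is that you aggregate the one-step bounds by telescoping, using $\sum_t (\gamma_t - 2) = h$, whereas the paper phrases the same chaining as an induction on $v$.
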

\begin{proof}
%By Corollary \ref{cor_seq_mult_anye} we have a sequence 
%$$\bm m_v =  \widetilde{\bm m}_0, \widetilde{\bm m}_1, \ldots, \widetilde{\bm m}_v=\bm m$$
%such that for $0\leq t \leq v-1$
%\begin{enumerate}
%\item $\widetilde{\bm m}_{t+1}:=s^{jk}_{li}(\widetilde{\bm m}_{t})$ for some $i,l,j,k$;
%\item $w(\widetilde{\bm m}_{t+1})>w(\widetilde{\bm m}_{t})$.
%\end{enumerate}
We proceed by induction on $v$. If $v=1$, the sequence of multicores consists of $\bm m_0 = \bm m$ and $\bm m_1 = s_{il}^{jk}(\bm m_0)$ for some $i,l,j,k$ with $w(\bm m_0)=w(\bm m_1)+hr$ for $h > 0$. Note that $\bm m_0=s_{li}^{jk}(\bm m_1)$. By Proposition \ref{computew}(2), the weight of $\bm m_0$ is $w(\bm m_0)= w(\bm m_{1})-r(\gamma_{li}^{jk}(\bm m_{1})-2)$, so $h=-\gamma_{li}^{jk}(\bm m_1)+2$. Moreover, $h > 0$ and so we have that $\gamma_{li}^{jk}(\bm m_1) \leq 1$. Hence, we just need to check the following two cases. 
\begin{itemize}
\item If $\gamma_{li}^{jk}(\bm m_{1}) \leq 0$, then $d(\bm m_0) \geq d(\bm m_1)-2$ by Remark \ref{d>=d'-2} and $h=-\gamma_{li}^{jk}(\bm m_1)+2 \geq 2$. Thus,
$$d(\bm m_0) \geq d(\bm m_1)-2 \geq d(\bm m_1)+\gamma_{li}^{jk}(\bm m_0) -2 = d(\bm m_1) - h.$$
\item If $\gamma_{il}^{jk}(\bm m_0) = 1$ by Remark \ref{d>=d'-2}, then $h=1$ and $d(\bm m_0)\geq d(\bm m_1)-1$. Thus,
$$d(\bm m_0) \geq d(\bm m_1)-1 = d(\bm m_1)-h.$$
\end{itemize}

Suppose $v>1$. Let $w(\bm m)=w(\bm m_{v-1})+h'r$ with $0 \leq h' < h$ and $w(\bm m_{v-1})=w(\bm m_v)+h''r$ with $h''>0$ so that $h=h''+h'$. By induction hypothesis we know that $d(\bm m)\geq d(\bm m_{v-1})-h'$. In order to get the result we want to show that $d(\bm m)\geq d(\bm m_v)-h$. We know from the base step that that $d(\bm m_{v-1})\geq d(\bm m_v)-h''$. Thus,
$$d(\bm m) \geq d(\bm m_{v-1})-h' \geq d(\bm m_{v})-h''-h' = d(\bm m_v) -h.$$
\end{proof}

\begin{prop}\label{prop_d_K-h}
Fix $\bar{i} \in I$. Let $K=K_{\bar{i}}$ be the integer defined in \eqref{Kidef} for a core block $C$. Suppose that $\bm m$ is a multicore with core block $C$ and weight
$$w(\bm m) = w(C)+hr$$
with $0 \leq h \leq K$. Then $d(\bm m)\geq K-h$. 
\end{prop}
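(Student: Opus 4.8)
The plan is to reduce the general case to the distinguished multicore $\bm m_v$ supplied by Proposition~\ref{seq_mult_anye}, by splitting the total weight drop $hr$ into a strictly decreasing portion (handled by Lemma~\ref{d>=d'-h}) and a terminal non-increasing portion (handled by Lemma~\ref{l'<=l+1}). First I would apply Proposition~\ref{seq_mult_anye} to obtain a sequence $\bm m = \bm m_0, \ldots, \bm m_v, \ldots, \bm m_s = \bm\mu$ of multicores, all with core block $C$, with $\bm m_{t+1} = s_{il}^{jk}(\bm m_t)$, strictly decreasing weights for $t < v$, non-increasing weights for $t \geq v$, $\bm\mu \in C$, and crucially $|\gamma_{il}^{jk}(\bm m_v)| \leq 2$. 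Since the weights strictly decrease from $\bm m_0$ down to $\bm m_v$ and then never increase down to $\bm m_s = \bm\mu$ of weight $w(C)$, I can write $w(\bm m_v) = w(C) + h'r$ with $0 \leq h' \leq h$, noting that $h' = h$ precisely when $v = 0$ (equivalently, $h - h' > 0$ precisely when $v > 0$).

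The key intermediate claim is $d(\bm m_v) \geq K - h'$, which I would establish by a short case analysis. If $h' = 0$, then $\bm m_v$ is a multicore of weight $w(C)$ with core block $C$, hence it lies in $C$ itself (the unique minimal-weight block with that hub), so \eqref{K_leqd_mu} gives $d(\bm m_v) \geq K = K - h'$. If $h' \geq 1$, I would use that $\bm m_v$ satisfies $|\gamma_{il}^{jk}(\bm m_v)| \leq 2$ and has core block $C$: applying Lemma~\ref{l'<=l+1} to the pair $(\bm m_v, \bm\mu)$ yields $d(\bm\mu) \leq d(\bm m_v) + 1$, and combining with $d(\bm\mu) \geq K$ from \eqref{K_leqd_mu} gives $d(\bm m_v) \geq K - 1 \geq K - h'$.

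Next I would handle the strictly decreasing initial segment. If $v = 0$ there is nothing to do, since then $\bm m = \bm m_v$ and $h' = h$, so the intermediate claim already reads $d(\bm m) \geq K - h$. If $v > 0$, then $w(\bm m) = w(\bm m_v) + (h - h')r$ with $h - h' > 0$, and the segment $\bm m_0, \ldots, \bm m_v$ meets the hypotheses of Lemma~\ref{d>=d'-h}, giving $d(\bm m) \geq d(\bm m_v) - (h - h')$. Combining this with the intermediate claim yields $d(\bm m) \geq (K - h') - (h - h') = K - h$, as required.

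I expect the main obstacle to be the bookkeeping of the weight parameter $h'$ rather than any new structural idea. The delicate point is that Lemma~\ref{l'<=l+1} only delivers $d(\bm m_v) \geq K - 1$, not $d(\bm m_v) \geq K$; the argument therefore genuinely relies on the extra unit of slack provided by $h' \geq 1$ in the non-trivial case, while the boundary case $h' = 0$ must instead be closed by the separate observation that $\bm m_v$ then lies in the core block $C$. Matching the regime $0 \leq h \leq K$ of the hypothesis only serves to make the conclusion $K - h \geq 0$ meaningful; the chain of inequalities itself does not require it.
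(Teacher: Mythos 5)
Your proof is correct and follows essentially the same route as the paper's: both use the sequence from Proposition~\ref{seq_mult_anye}, bound the drop over the strictly decreasing initial segment via Lemma~\ref{d>=d'-h}, and close the gap at $\bm m_v$ either by observing it already lies in $C$ (so \eqref{K_leqd_mu} applies) or by the one-unit slack from Lemma~\ref{l'<=l+1}. The only difference is bookkeeping — your $h'$ is the paper's $h-h'$ and you split the boundary case slightly more explicitly — which if anything makes the degenerate case $v=0$ cleaner.
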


\begin{proof}
Let
$$
\bm m = \bm m_0, \ldots, \bm m_{v}, \bm m_{v+1}, \ldots , \bm m_{s}= \bm \mu
$$
be the sequence defined in Proposition \ref{seq_mult_anye}, where $v$ is such that $|\gamma_{il}^{jk}(\bm m_v)|\leq 2$ for all $i,l,j,k$, and $\bm\mu\in C$.

By Lemma \ref{d>=d'-h}, we have that
\begin{equation}\label{dm_dm_v-h'}
d(\bm m)\geq d(\bm m_v)-h',
\end{equation}

where $0\leq h'\leq h$ is such that $w(\bm m)=w(\bm m_v) +h'r$.

If $h'=h$, then $\bm m_v=\bm m_s=\bm \mu\in C$; therefore $d(\bm m)\geq d(\bm \mu)-h\geq K-h$ by \eqref{K_leqd_mu}.

Otherwise $h'<h$, and Lemma \ref{l'<=l+1} can be applied to get that
$$
d(\bm m_v)\geq d(\bm \mu)-1\geq d(\bm \mu)-(h-h').
$$
Combining this with \eqref{dm_dm_v-h'}, we have:
\begin{equation*}
d(\bm m)\geq d(\bm m_v)-h' \geq d(\bm \mu)-(h-h')-h'= d(\bm \mu)-h 
\geq  K-h.
\end{equation*}

\end{proof}

\section{Decomposition numbers for blocks of $\mathcal{H}_{r,n}$}\label{main_res}
Now, we want to generalise Lemma 2.1 of Scopes' paper \cite{Scopes91} to a graded version for the Ariki-Koike algebras $\mathcal{H}_{r,n}$. {Thus, we want to show that, for $i \in I$, the $v$-decomposition matrices of the blocks $B$ and $\Phi_i(B)$ are the same, }
%restriction of modules leads to a natural correspondence between the multipartitions of $n$ whose Specht modules belong to $B$ and those of $n-\delta_i(B)$ whose Specht modules belong to $\Phi_i(B)$, 
provided that %w(B)\leq \delta_{i}(C) + (h+K_i+1)r
\begin{equation}\label{condition_anye}
    w(B)\leq w(C)+K_ir
\end{equation}
where
\begin{itemize}
\item $C$ is the core block of $B$,
%\item $h$ is the integer such that $w(C)-\delta_i(C) \in \{hr+1, hr+2, \ldots, (h+1)r\}$,
\item $K_i$ is the integer defined in \eqref{Kidef}.
\end{itemize}

Notice that this condition is a block condition, i.e., it is satisfied by all the multipartitions in the block. When $i$ is understood, we write $K$ instead of $K_i$.

\begin{rem}
If $r=1$, condition \eqref{condition_anye} is equivalent to Scopes' condition for the symmetric group in \cite[Section 2]{Scopes91}.
\end{rem}

Moreover, note that Proposition \ref{mult_remhook} implies the following.
\begin{thm}
Let $\bm\mu$ be a multipartition in a core block and let $0 \leq h \leq K_i$. Let $B$ be the block containing the multipartitions obtained by adding $h$ $e$-rim hooks to $\bm\mu$. Then $B$ satisfies condition \eqref{condition_anye}. 
\end{thm}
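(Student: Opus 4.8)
The plan is to compute $w(B)$ explicitly and to identify the core block of $B$ with the block of $\bm\mu$. Write $C$ for the block of $\bm\mu$; by hypothesis $C$ is a core block, so $w(\bm\mu)=w(C)$. Every multipartition obtained by adding $h$ $e$-rim hooks to $\bm\mu$ has the same residue set (an $e$-rim hook is a border strip of size $e$, whose nodes have $e$ consecutive contents and hence contribute exactly one node of each residue in $I$), so by Theorem \ref{block_car} they all lie in a single block, namely $B$; this justifies speaking of ``the'' block $B$. Fixing one such $\bm\lambda\in B$ and applying Proposition \ref{mult_remhook} once for each of the $h$ hooks removed from $\bm\lambda$ back down to $\bm\mu$ gives
$$w(B)=w(\bm\lambda)=w(\bm\mu)+hr.$$

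The crux is to show that $C$ is precisely the core block of $B$, for which it suffices to check that $B$ and $C$ have the same hub. I would argue that adding an $e$-rim hook leaves the hub unchanged. On the abacus of the relevant component, adding an $e$-rim hook is the inverse of the bead-raising move of Section \ref{betaAbacus}: it slides a single bead \emph{down} one level on its own runner, so no bead ever changes runner. Since each $\delta_i^j$ is determined by the bead-and-gap patterns on the two adjacent runners $i-1$ and $i$ (equivalently, by the regularised difference between the numbers of beads on these runners), moving a bead within a fixed runner changes no $\delta_i^j$, and hence fixes the whole hub. Thus $B$ and $C$ share a hub; as $C$ is a core block it is the unique block of minimal weight with this hub (Theorem \ref{corebleq}(2) together with Proposition \ref{hubblock}(2)), so $C$ is the core block of $B$ and $K_i=K_i(C)$ is the same integer appearing in both \eqref{condition_anye} and the statement.

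Combining the two observations, and using the hypothesis $h\leq K_i$,
$$w(B)=w(C)+hr\leq w(C)+K_ir,$$
which is exactly condition \eqref{condition_anye}.

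I expect the hub-invariance assertion in the middle paragraph to be the only genuine obstacle. The quick ``bead stays on its runner'' picture is convincing, but to make it rigorous one must match it against the combinatorial definition $\delta_i^j=\#\{\text{removable }i\text{-nodes}\}-\#\{\text{addable }i\text{-nodes}\}$. The verification is the bookkeeping that, level by level, a bead on runner $i$ lying over a gap on runner $i-1$ records a removable $i$-node while a bead on runner $i-1$ lying over a gap on runner $i$ records an addable one, so that $\delta_i^j$ equals the difference of the bead counts on runners $i$ and $i-1$; this quantity is manifestly unaffected by a vertical slide of a bead along a single runner.
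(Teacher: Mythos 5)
Your proof is correct and follows essentially the same route as the paper, whose entire argument is the observation that Proposition \ref{mult_remhook} yields $w(B)=w(\bm\mu)+hr=w(C)+hr\leq w(C)+K_ir$. The additional details you supply --- that all multipartitions obtained by adding $h$ $e$-rim hooks to $\bm\mu$ share a residue set and hence a block, and that adding an $e$-rim hook (a downward slide of a bead along its own runner) preserves the hub, so that $C$ is indeed the core block of $B$ and the two occurrences of $K_i$ agree --- are correct and are left implicit in the paper.
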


\begin{lem}\label{noconfo-}
Fix $i \in I$. Let $B$ be a block of $\mathcal{H}_{r,n}$ such that \eqref{condition_anye} holds and $\delta_i(B)\geq 0$.
\begin{itemize}
    \item If $i\neq 0$, then in each component of every $r$-multipartition $\bm\lambda$ of $n$ such that $S^{\bm{\lambda}}$ belongs to the block $B$, there is no abacus configuration of the type $ 
        \begin{matrix}

        \bd & \nb \\

        \end{matrix}
$ in runners $i-1$ and $i$. That is, there is no $1\leq j \leq r$ with $b\equiv i-1 \ (\mathrm{mod} \ e)$ and $b\in B^{j}_{a_j}, \ b+1\notin B^{j}_{a_j}$.
   \item If $i=0$, then in each component of every $r$-multipartition $\bm\lambda$ of $n$ such that $S^{\bm{\lambda}}$ belongs to the block $B$, there is no abacus configuration of the type $ 
        \begin{matrix}

        \bd & \nb \\

        \end{matrix}
$ in runners $e-1$ and $0$. That is, there is no $1\leq j \leq r$ with $b\equiv e-1 \ (\mathrm{mod} \ e)$ and $b\in B^{j}_{a_j}, \ b+1, b+e+1\notin B^{j}_{a_j}$.
\end{itemize}
\end{lem}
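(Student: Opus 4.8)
The plan is to read the forbidden configuration $\begin{matrix}\bd & \nb\end{matrix}$ on runners $i-1,i$ as an \emph{addable $i$-node}: a bead at a position $b\equiv i-1 \pmod e$ of $B^{j}_{a_j}$ whose neighbour $b+1$ (runner $i$, same level) is empty is exactly an addable node of residue $i$ in $\lambda^{(j)}$. So for $i\neq0$ I must show that no component of any $\bm\lambda\in B$ has an addable $i$-node. Since $\bm\lambda$ need not be a multicore, I first pass to the multicore $\bm m$ obtained by removing every $e$-rim hook from each component of $\bm\lambda$. Removing an $e$-rim hook slides a bead up one level on its runner, so it preserves the hub (Proposition~\ref{hubblock}) and lowers the weight by $r$ (Proposition~\ref{mult_remhook}); hence $\bm m$ has the same core block $C$ as $B$, and writing $w(B)=w(C)+hr$ and $w(\bm m)=w(C)+h''r$ we get $0\le h''\le h\le K$ by \eqref{condition_anye}. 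Proposition~\ref{prop_d_K-h} then gives $d(\bm m)\ge K-h''$, so for every $j$,
\[
\delta_i^j(\bm m)=\ell^{\bm{\mathrm a}}_{ij}(\bm m)-\ell^{\bm{\mathrm a}}_{i-1,j}(\bm m)\ \ge\ d(\bm m)\ \ge\ K-h''.
\]

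Next I argue by contradiction. Suppose some component $\lambda^{(j)}$ has an addable $i$-node, i.e.\ a bead on runner $i-1$ at a level $x_0$ with the slot on runner $i$ at level $x_0$ empty. Passing from $\bm m$ to $\bm\lambda$ only slides beads \emph{down} their runners, by a total of $h-h''$ levels (one per added $e$-rim hook); write $D^{(l)}_k$ for the number of these slides on runner $k$ of component $l$, so $\sum_{k,l}D^{(l)}_k=h-h''$. Monotonicity of the slides gives $\ell^{\bm{\mathrm a}}_{ij}(\bm\lambda)\ge\ell^{\bm{\mathrm a}}_{ij}(\bm m)$ and $\ell^{\bm{\mathrm a}}_{i-1,j}(\bm\lambda)\le\ell^{\bm{\mathrm a}}_{i-1,j}(\bm m)+D^{(j)}_{i-1}$. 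Since there is a bead on runner $i-1$ at level $x_0$, we have $x_0\le\ell^{\bm{\mathrm a}}_{i-1,j}(\bm\lambda)$, and combining this with the previous inequalities yields
\[
\ell^{\bm{\mathrm a}}_{ij}(\bm m)-x_0\ \ge\ (K-h'')-D^{(j)}_{i-1}\ \ge\ (K-h'')-(h-h'')\ =\ K-h\ \ge\ 0 .
\]
In particular $x_0\le\ell^{\bm{\mathrm a}}_{ij}(\bm m)$: the empty slot lies inside the solid core region of runner $i$, so it is a genuine ``buried'' gap rather than empty space below the lowest bead.

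The crux is then a displacement count on runner $i$ of component $j$. In $\bm m$ that runner is solid down to $\ell^{\bm{\mathrm a}}_{ij}(\bm m)$, so the $\ell^{\bm{\mathrm a}}_{ij}(\bm m)-x_0+1$ beads originally at levels $x_0,\dots,\ell^{\bm{\mathrm a}}_{ij}(\bm m)$ must, since level $x_0$ is emptied and beads only move down, occupy $\ell^{\bm{\mathrm a}}_{ij}(\bm m)-x_0+1$ distinct levels all $>x_0$; the minimal total drop for this is the block shift to $x_0+1,\dots,\ell^{\bm{\mathrm a}}_{ij}(\bm m)+1$, giving $D^{(j)}_i\ge\ell^{\bm{\mathrm a}}_{ij}(\bm m)-x_0+1$. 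Feeding in the bound from the previous step,
\[
D^{(j)}_i+D^{(j)}_{i-1}\ \ge\ \bigl(\ell^{\bm{\mathrm a}}_{ij}(\bm m)-x_0+1\bigr)+D^{(j)}_{i-1}\ \ge\ (K-h'')+1 ,
\]
while $D^{(j)}_i+D^{(j)}_{i-1}\le\sum_{k,l}D^{(l)}_k=h-h''$. Hence $h-h''\ge K-h''+1$, i.e.\ $h\ge K+1$, contradicting $h\le K$. I expect this displacement count to be the main obstacle: it is exactly where the ``buffer'' $K$ supplied by the core block must be shown to dominate the total weight $h$ available for sliding beads, and one must bound the per-runner slide counts $D^{(j)}_i$ cleanly (via the block shift) rather than only the lowest-bead displacements.

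Finally, the case $i=0$ I would treat the same way, with the two modifications already built into the set-up: the relevant runners are $e-1$ and $0$, the passage $\phi_0$ shifts levels (so $\delta_0^j$ is measured with the offset recorded in \eqref{djivalues0}, and $K_0=b_0-b_{e-1}-2$ carries the extra $-2$), and the forbidden pattern requires both $b+1$ and $b+e+1$ to be empty. The hypothesis $\delta_i(B)\ge0$ guarantees that the runners are compared in the correct orientation, so that $K_i$ and the block condition \eqref{condition_anye} are the meaningful ones. Repeating the monotonicity step and the displacement count with this offset yields the same contradiction $h\ge K_0+1$.
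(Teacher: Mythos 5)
Your proposal is correct and follows essentially the same route as the paper: pass to the associated multicore $\bm m$ (sliding all beads up), use condition \eqref{condition_anye} together with Proposition \ref{prop_d_K-h} to get $d(\bm m)\geq K-h''$, and observe that only $h-h''$ single downward slides are available to recover $\bm\lambda$, which is too few to open a gap on runner $i$ next to a bead on runner $i-1$. The only difference is that you spell out the final displacement count explicitly, whereas the paper delegates that step to the argument of Scopes' Lemma 2.1.
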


\begin{proof}
First of all, notice that
\begin{equation}\label{fund_condition_anye}
w(B)-w(C)=\ell r, \hspace{0.5cm} \text{for some } \ell\in\{0,\ldots,K\}
\end{equation}
since $w(B)-w(C)$ can take as values only integral multiples of $r$. 

Now, consider the multicore $\bm m$ associated to $\bm\lambda$, that is the multicore whose abacus display is obtained from the one of $\bm\lambda$ sliding all the beads up as high as possible. Then $\bm m$ has the same core block $C$ of $\bm\lambda$ and weight $w(\bm m) = w(C)+sr$ with $0 \leq s \leq \ell$. Thus, by Proposition \ref{prop_d_K-h} we can say that
$$
d(\bm m)\geq K-s\geq \ell-s,
$$
since $\ell \leq K$ by \eqref{fund_condition_anye}.
%This inequality tells us that the multipartition $\bm \lambda$ presents no configuration of the type $ 
%        \begin{matrix}
%
%        \bd & \nb \\
%
%        \end{matrix}
% in runners $i-1$ and $i$. Indeed, 
{Moreover, in order to get $\bm \lambda$ from $\bm m$ we just need to slide beads down of a total number of $\ell-s$ spaces.} This implies that in each component $m^{(j)}$ of $\bm m$ we need to slide beads down of at most $\ell-s$ spaces. Since $d(\bm m) \geq \ell -s$, similarly to the proof of \cite[Lemma 2.1]{Scopes91} we can conclude that each component $\lambda^{(j)}$ of $\bm \lambda$ has no configuration $ 
        \begin{matrix}

        \bd & \nb \\

        \end{matrix}
$ in runners $i-1$ and $i$.
\end{proof}

{Recall the definition of length of a permutation $\sigma \in \mathfrak{S}_{n}$ as 
$$\ell(\sigma)= |\{(i,j)\ | \ 1 \leq i<j\leq n, \sigma(i) > \sigma(j)\}|.$$ Set $\mathfrak{S}_{n}^k:=\{\sigma \in \mathfrak{S}_n \ | \ \ell(\sigma)=k\}$. Note that $\ell_n^{\max} := \max\{\ell(\sigma)\ | \ \sigma \in \mathfrak{S}_n\} = \frac{n(n-1)}{2}$ and $n!=|\mathfrak{S}_{n}| = \sum\limits_{k=0}^{\ell_n^{\max}}|\mathfrak{S}_{n}^k|$.

\begin{prop}\label{lem2.1}
Fix $i \in I$. Let $B$ be a block of $\mathcal{H}_{r,n}$ such that \eqref{condition_anye} holds and $\delta=\delta_i(B)\geq 0$, and let $\Phi_i(B)$ be the block described in Proposition \ref{bij_mpart}. Set $\ell = \ell^{\max}_{\delta_i(B)}$. Suppose that $\bm{\lambda}$ is an $r$-multipartition of $n$ such that $S^{\bm{\lambda}}$ belongs to the block $B$.
{Then the multipartition $\Phi_i(\bm{\lambda})$ of $n-\delta$ is such that $S^{\Phi_i(\bm{\lambda})}$ belongs to $\Phi_i(B)$ and
\begin{align*}
S^{\bm{\lambda}} \downarrow_{\Phi_i(B)} & \sim \sum_{k=0}^{\ell} |\mathfrak{S}_{\delta}^k| S^{\Phi_i(\bm{\lambda})}\langle\ell-2k\rangle,\\ S^{\Phi_i(\bm{\lambda})} \uparrow^B & \sim \sum_{k=0}^{\ell} |\mathfrak{S}_{\delta}^k| S^{\bm{\lambda}}\langle\ell-2k\rangle.
\end{align*}}
\end{prop}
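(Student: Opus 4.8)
The plan is to compute both sides by iterating the graded branching rule (Theorem \ref{gradedbranching}) $\delta$ times and then projecting onto the relevant block. First I would record the combinatorial input supplied by Lemma \ref{noconfo-}: since $\delta = \delta_i(B)\geq 0$ and \eqref{condition_anye} holds, no component of $\bm\lambda$ contains the configuration $\begin{matrix}\bd & \nb\end{matrix}$ on runners $i-1,i$, which translates into the statement that $\bm\lambda$ has \emph{no} addable $i$-node and hence exactly $\delta$ removable $i$-nodes (so $\delta_i(\bm\lambda)=\delta$). On the abacus these $\delta$ removable $i$-nodes are beads on runner $i$ at pairwise distinct levels, each with an empty space immediately to its left on runner $i-1$; moving any one of them fills only its own level and leaves the others removable. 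Removing all $\delta$ of them (and adding nothing, as there are no addable $i$-nodes) produces precisely $\Phi_i(\bm\lambda)$, whose block is $\Phi_i(B)$ by Proposition \ref{bij_mpart}.

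Next I would argue that $\Phi_i(\bm\lambda)$ is the only surviving contribution after projection. Removing $\delta$ nodes from $\bm\lambda$ deletes the residues of the removed nodes from the multiset $\mathrm{Res}_{\bm{\mathrm{a}}}$, so by Theorem \ref{block_car} the result lies in $\Phi_i(B)$ only if the deleted multiset is exactly $\delta$ copies of $i$. Combined with the previous paragraph, this forces every surviving chain in the iterated restriction to remove the $\delta$ fixed removable $i$-nodes, in some order. Thus, after projecting the $\delta$-step filtration of $S^{\bm\lambda}\downarrow_{\mathcal{H}_{r,n-\delta}}$ onto $\Phi_i(B)$, the only factors that survive are copies of $S^{\Phi_i(\bm\lambda)}$, one for each of the $\delta!$ removal orders, each carrying a grading shift equal to the sum of the one-step shifts $N_A$ prescribed by Theorem \ref{gradedbranching} (these add because graded shifts compose additively in a filtration of a filtration).

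The heart of the proof, and the step I expect to be the main obstacle, is the bookkeeping of these grading shifts. Order the $\delta$ removable $i$-nodes $A_1 > A_2 > \cdots > A_\delta$ in the above/below order of \eqref{EDMUA}. Removing a removable $i$-node converts its position into an addable $i$-node and leaves all other $i$-nodes untouched (again because they lie at distinct, isolated levels). Hence, once a subset $T$ of the nodes has been removed, the addable $i$-nodes below a given $A_a$ are exactly the members of $T$ below it and the removable $i$-nodes below it are exactly the not-yet-removed ones, so at that stage $N_{A_a}=2\,\#\{A_b \text{ below } A_a : A_b \in T\} - \#\{A_b \text{ below } A_a\}$. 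Summing over a removal order $\sigma\in\mathfrak{S}_\delta$, the second part sums to $\binom{\delta}{2}=\ell$ while the first collects into twice the number of inversions of $\sigma$, giving total shift $2\ell(\sigma)-\ell$; since we range over all of $\mathfrak{S}_\delta$ this multiset of shifts coincides with $\{\ell-2\ell(\sigma)\}$ via the palindromic symmetry $\sigma\mapsto w_0\sigma$. Grouping the $\delta!$ chains by length then yields
\begin{equation*}
S^{\bm\lambda}\downarrow_{\Phi_i(B)} \sim \sum_{\sigma\in\mathfrak{S}_\delta} v^{\ell-2\ell(\sigma)}\, S^{\Phi_i(\bm\lambda)} = \sum_{k=0}^{\ell} |\mathfrak{S}_\delta^k|\, S^{\Phi_i(\bm\lambda)}\langle \ell-2k\rangle,
\end{equation*}
as required.

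Finally, the induction formula is the mirror image. Dually, $\Phi_i(\bm\lambda)$ has exactly $\delta$ addable $i$-nodes and no removable $i$-node, and adding them all returns $\bm\lambda$. Running the identical computation with the addition part of Theorem \ref{gradedbranching} and the quantities $N^B$ of \eqref{EDMUB} in place of $N_A$ — or, alternatively, invoking the graded adjunction between $\uparrow$ and $\downarrow$ — produces the second displayed equivalence with the same coefficients $|\mathfrak{S}_\delta^k|$ and shifts $\ell-2k$.
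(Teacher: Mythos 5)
Your proof is correct and follows the same overall strategy as the paper: use Lemma \ref{noconfo-} to rule out addable $i$-nodes of $\bm\lambda$, so that the iterated graded branching rule (Theorem \ref{gradedbranching}) followed by projection onto $\Phi_i(B)$ leaves exactly the $\delta!$ chains that remove the $\delta$ removable $i$-nodes in some order, and then compute the grading shift attached to each order. The one place where you genuinely diverge is the computation of that shift. The paper fixes the order corresponding to $\sigma\in\mathfrak{S}_\delta$ and proves $d_\sigma=\ell-2\ell(\sigma)$ by induction on $\ell(\sigma)$, writing $\sigma=s_y\tau$ with $\ell(\tau)=\ell(\sigma)-1$ and tracking on the abacus how swapping two adjacent removals changes the quantities $N_A$ (one is unchanged, the other drops by $2$). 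You instead give a closed-form count: after a set $T$ of nodes has been removed, $N_{A_a}=2\,\#\{A_b\in T \text{ below } A_a\}-\#\{A_b \text{ below } A_a\}$, and summing over the steps of $\sigma$ gives the total shift in one pass. The sign discrepancy with the paper ($2\ell(\sigma)-\ell$ versus $\ell-2\ell(\sigma)$) is only an artefact of your labelling the nodes $A_1>\cdots>A_\delta$ while the paper, following the convention of Theorem \ref{gradedbranching}, labels them $A_1<\cdots<A_\delta$; as you observe, the multiset of shifts is the same either way by palindromicity of the length generating function, so the final formula is unaffected. Your direct computation is shorter and arguably more transparent than the paper's induction on Coxeter length; both rest on the same key observation that removing one of the $\delta$ isolated beads turns its position into an addable $i$-node without disturbing the others.
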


\begin{proof}
Suppose that $\bm{\mu}$ is a multipartition of $n-\delta$ and $S^{\bm{\mu}}\langle d\rangle$ for some $d \in \Z$ is a factor of $S^{\bm{\lambda}} \downarrow_{\mathcal{H}_{r,n-\delta}}$ using the graded Specht filtration given in Theorem \ref{gradedbranching}. The diagram [$\bm\mu$] can be obtained from [$\bm\lambda$] by removing $\delta$ nodes. In fact, the abacus of $\bm\mu$ is obtained from that of $\bm\lambda$ by successively moving $\delta$ beads one place to the left. Hence, the module $S^{\bm\mu}$ belongs to $\Phi_i(B)$ if and only if the overall effect of $\Phi_i$ is moving $\delta$ beads from each runner $i$ to each runner $i-1$ by point (2) of Proposition \ref{hubblock}.

By Lemma \ref{noconfo-}, in each component of the multipartition $\bm\lambda$, we have no abacus configuration of the type $ 
        \begin{matrix}

        \bd & \nb \\

        \end{matrix}
$ in runners $i-1$ and $i$. This implies that $\bm\lambda$ has no addable $i$-nodes and so $\Phi_i$ consists only of removing $i$-nodes.  Hence, $S^{\bm\mu}$ belongs to $\Phi_i(B)$. The number of ways in which the node removal can be effected is $\delta!$ because they have all the same residue $i$ and so we can remove these nodes in any order. This also shows that there is exactly one $\bm\mu$ that can be found by removing $\delta$ $i$-nodes.

Each order in which we remove the $i$-nodes will determine a shift $d$. Suppose that $A_{\sigma(1)}, \dots,  A_{\sigma(\delta)}$ is the order in which we remove the removable $i$-nodes from $\bm\la$ with $\sigma\in \mathfrak{S}_{\delta}$ of length $k\geq0$. Then
$$d= \ell -2k.$$

We prove this by induction on the length $k$ of the permutation $\sigma$. Let $A_{\sigma(1)}, \dots,  A_{\sigma(\delta)}$ be a sequence of all the removable $i$-nodes of $[\bm\lambda]$. For each $m \in\{1, \ldots, \delta\}$, let $\bm\lambda^{-\sigma(m)}$ be the multipartition of $n~-~m$ obtained by removing the first $m$ nodes in the sequence $A_{\sigma(1)}, \dots,  A_{\sigma(\delta)}$ respecting the order, that is  $[\bm\lambda^{-\sigma(m)}]~=~([\bm\lambda] ~\setminus~ \{A_{\sigma(1)}\}) \setminus \{A_{\sigma(2)}\} \dots \setminus \{ A_{\sigma(m)}\}$. Suppose that the order $A_1 < \dots < A_{\delta}$ on removable $i$-nodes of $\bm\la$ is the one corresponding to the permutation $\sigma=\mathrm{id}$.

If $k=0$, then $\sigma=\mathrm{id}$ and we remove the $i$-nodes in the following order $A_1 ~<~ \dots < ~A_{\delta}$ then
\begin{align*}
    d &= N_{A_1}(\bm\la)+ N_{A_2}(\bm\la^{-1})+ \dots +  N_{A_{\delta}}(\bm\la^{-(\delta-1)})\\
      &= (0 - 0) + (1 - 0) + \dots + (\delta-1 -0)\\
      &= \dfrac{\delta(\delta-1)}{2}\\
      &= \ell.
\end{align*}

Suppose that $k\geq1$ and let $A_{\sigma(1)}, \dots,  A_{\sigma(\delta)}$ be the order in which we remove the $i$-nodes from $\bm\la$. Write $\sigma = s_y\tau$ with $s_y=(y,y+1)$, $\ell(\tau)=k-1$, and $\tau(y)<\tau(y+1)$. Then by inductive hypothesis we know that the shift $d_{\tau}$ corresponding to removing the $i$-nodes in the order $A_{\tau(1)}, \dots,  A_{\tau(\delta)}$ is given by
$$d_{\tau}= \ell -2(k-1).$$
Note that the order $A_{\sigma(1)}, \dots,  A_{\sigma(\delta)}$ corresponding to $\sigma$ is equal to
$$A_{\tau(1)}, \dots,A_{\tau(y+1)},A_{\tau(y)},\dots,  A_{\tau(\delta)}.$$
So, we are removing the $i$-nodes in the same order corresponding to $\tau$ except for the nodes $A_{\tau(y)}, A_{\tau(y+1)}$ that are swapped. Thus, for all $m \neq y, y+1$ we have that $N_{A_{\sigma(m)}}(\bm\la^{-\sigma(m-1)})= N_{A_{\tau(m)}}(\bm\la^{-\tau(m-1)})$. Then it remains to check which relation holds between $N_{A_{\sigma(m)}}(\bm\la^{-\sigma(m-1)})$ and $N_{A_{\tau(m)}}(\bm\la^{-\tau(m-1)})$ for $m\in\{y,y+1\}$.

In terms of abacus configuration, the steps $y$ and $y+1$ of sequences of removable $i$-nodes given by the permutations $\tau$ and $\sigma$ are as shown below (since $\tau(y)<\tau(y+1)$ then $A_{\tau(y)}<A_{\tau(y+1)}$, and so we draw the bead corresponding to $A_{\tau(y)}$ in a lower position with respect to the one of $A_{\tau(y+1)}$). In order to make the visualisation of the abacus displays easier, we can assume that the beads corresponding to $A_{\tau(m)}$ and $A_{\sigma(m)}$ for $m\in\{y,y+1\}$ are in the same component $\la^{(j)}$ of $\bm\la$. %Notice that this assumption is not necessary for the scope of the proof, but it simplifies the representation of the abacus configurations.

\begin{center}
\scalebox{0.80}{
\begin{tabular}{ccccccc}
&&$\bm\la^{-\tau(y-1)}$ & & $\bm\la^{-\tau(y)}$ & & $\bm\la^{-\tau(y+1)}$\\
{\huge$\tau$} &&$ \begin{NiceMatrix}
    i-1 & i \\
    &&\\    
    \bignb & \bigbd & A_{\tau(y)} \\

    \bigvd & \bigvd & \\
    \bignb & \bigbd & A_{\tau(y+1)}\\
    \bigvd & \bigvd &\\
   \end{NiceMatrix}$
   &
   $\rightarrow$
   &
$ \begin{NiceMatrix}
    i-1 & i \\
    &&\\    
    \bigbd & \bignb & A_{\tau(y)} \\

    \bigvd & \bigvd & \\
    \bignb & \bigbd & A_{\tau(y+1)}\\
    \bigvd & \bigvd &\\
   \end{NiceMatrix}$
   &
   $\rightarrow$
   &
   $ \begin{NiceMatrix}
    i-1 & i \\
    &&\\    
    \bigbd & \bignb & A_{\tau(y)} \\

    \bigvd & \bigvd & \\
    \bigbd & \bignb & A_{\tau(y+1)}\\
    \bigvd & \bigvd &\\
   \end{NiceMatrix},$  
\end{tabular}}
\end{center}

\begin{center}
\scalebox{0.80}{
\begin{tabular}{ccccccc}
& &$\bm\la^{-\sigma(y-1)} =\bm\la^{-\tau(y-1)}$ & & $\bm\la^{-\sigma(y)}$ & & $\bm\la^{-\sigma(y+1)}=\bm\la^{-\tau(y+1)}$\\
{\huge$\sigma$}& &$ \begin{NiceMatrix}
    i-1 & i \\
    &&\\    
    \bignb & \bigbd & A_{\tau(y)} \\

    \bigvd & \bigvd & \\
    \bignb & \bigbd & A_{\tau(y+1)}\\
    \bigvd & \bigvd &\\
   \end{NiceMatrix}$
   &
   $\rightarrow$
   &
$ \begin{NiceMatrix}
    i-1 & i \\
    &&\\    
    \bignb & \bigbd & A_{\tau(y)} \\

    \bigvd & \bigvd & \\
    \bigbd & \bignb & A_{\tau(y+1)}\\
    \bigvd & \bigvd &\\
   \end{NiceMatrix}$
   &
   $\rightarrow$
   &
   $ \begin{NiceMatrix}
    i-1 & i \\
    &&\\    
    \bigbd & \bignb & A_{\tau(y)} \\

    \bigvd & \bigvd & \\
    \bigbd & \bignb & A_{\tau(y+1)}\\
    \bigvd & \bigvd &\\
   \end{NiceMatrix}$  
\end{tabular}}
\end{center}
Notice that $\bm\la^{-\tau(y-1)}$ and $\bm\la^{-\sigma(y)}$ have the same configuration of nodes below $A_{\tau(y)}=A_{\sigma(y+1)}$ and therefore 
\begin{equation*}
   N_{A_{\sigma(y+1)}}(\bm\la^{-\sigma(y)})=N_{A_{\tau(y)}}(\bm\la^{-\tau(y-1)}).
\end{equation*}
Also, we claim that 
\begin{equation*}
    N_{A_{\sigma(y)}}(\bm\la^{-\sigma(y-1)})=N_{A_{\tau(y+1)}}(\bm\la^{-\tau(y)})-2.
\end{equation*}
Let 
\begin{equation*}
    \begin{split}
        \mathrm{add}_{\tau}&=\{\text{addable $i$-nodes of  $\bm\la^{-\tau(y)}$ below $A_{\tau(y+1)}$}\}, \text{ and,}\\
         \mathrm{rem}_{\tau}&=\{\text{removable $i$-nodes of $\bm\la^{-\tau(y)}$ below $A_{\tau(y+1)}$}\}.
    \end{split}
\end{equation*}
Since $N_{A_{\sigma(y)}}(\bm\la^{-\sigma(y-1)})=N_{A_{\tau(y+1)}}(\bm\la^{-\tau(y-1)})$, let 
\begin{equation*}
    \begin{split}
        \mathrm{add}_{\sigma}&=\{\text{addable $i$-nodes of $\bm\la^{-\tau(y-1)}$ below $A_{\tau(y+1)}$}\}, \text{ and,}\\
        \mathrm{rem}_{\sigma} &=\{\text{removable $i$-nodes of $\bm\la^{-\tau(y-1)}$ below $A_{\tau(y+1)}$}\}.\\
    \end{split}
\end{equation*}
The abacus configurations above make it clear that the following equalities hold.
\begin{equation*}
    \begin{split}
    \mathrm{add}_{\sigma}&=\mathrm{add}_{\tau}\setminus \{A_{\tau(y)}\}\\
    \mathrm{rem}_{\sigma}&=\mathrm{rem}_{\tau}\cup \{A_{\tau(y)}\}
    \end{split}
\end{equation*}
As a consequence we get that 
\begin{equation*}
\begin{split}
 N_{A_{\sigma(y)}}(\bm\la^{-\sigma(y-1)})&= |\mathrm{add}_{\sigma}|-|\mathrm{rem}_{\sigma}|\\
 &=|\mathrm{add}_{\tau}|-1-(|\mathrm{rem}_{\tau}|+1)\\
 &=N_{A_{\tau(y+1)}}(\bm\la^{-\tau(y)})-2
\end{split}
\end{equation*}
Hence, we showed that 
\begin{equation*}
\begin{split}
N_{A_{\sigma(y+1)}}(\bm\la^{-\sigma(y)})&=N_{A_{\tau(y)}}(\bm\la^{-\tau(y-1)}), \text{ and}\\
N_{A_{\sigma(y)}}(\bm\la^{-\sigma(y-1)})&=N_{A_{\tau(y+1)}}(\bm\la^{-\tau(y)})-2.\\
\end{split}
\end{equation*}
So, the shift $d_{\sigma}=d_{\tau} -2 =\ell -2k$ as required.

The multiplicity of $S^{\bm\mu}\langle d\rangle$ for each shift $d$ as a factor is the number of permutations of $\mathfrak{S}_{\delta}$ of the same length. This gives the first result about restriction.

Similarly, the second result about induction can be proved by adding $i$-nodes instead of removing $i$-nodes. 
\end{proof}}

\begin{lem}\label{lexor_preserved}
Let $i\in I$. If condition \eqref{condition_anye} holds, then $\Phi_i$ preserves the lexicographic order of multipartitions.
\end{lem}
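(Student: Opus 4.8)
The plan is to reduce the statement to a question about a single component and then to translate the lexicographic order into a statement about $\beta$-sets, on which the integer map $\phi_i$ acts transparently. Throughout I assume $\delta_i(B)\geq 0$, as in Lemma~\ref{noconfo-}, so that that lemma applies: $\Phi_i$ creates no addable $i$-node, i.e.\ it only removes $i$-nodes, in every component of every multipartition in $B$. Fix $\bm{\lambda},\bm{\mu}$ in $B$ with $\bm{\lambda}>\bm{\mu}$, and let $j_0$ be the first component in which they differ. For $j<j_0$ we have $\lambda^{(j)}=\mu^{(j)}$, hence $\Phi_i(\lambda^{(j)})=\Phi_i(\mu^{(j)})$; and since $\phi_i$ is a bijection of $\mathbb{Z}$, the map $\Phi_i$ is injective on partitions, so $\Phi_i(\lambda^{(j_0)})\neq\Phi_i(\mu^{(j_0)})$. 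Thus $j_0$ is again the first component where the images differ, and it suffices to prove the single-component statement: if $\lambda^{(j_0)}>\mu^{(j_0)}$ in the lexicographic order on partitions, then $\Phi_i(\lambda^{(j_0)})>\Phi_i(\mu^{(j_0)})$.

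The key translation is as follows. Write $S$ and $S'$ for the $\beta$-sets of $\lambda^{(j_0)}$ and $\mu^{(j_0)}$ with respect to the fixed $a_{j_0}$. The sets $S,S'$ agree far down, so their symmetric difference $D:=S\triangle S'$ is finite, and I claim $\lambda^{(j_0)}>\mu^{(j_0)}$ if and only if $\max D\in S$. Indeed, since $\lambda_b=\beta_b-a_{j_0}+b$, the lexicographic comparison of partitions is exactly the term-by-term comparison of the decreasing $\beta$-sequences, and $M:=\max D$ is precisely the largest $\beta$-number at which the two sequences first diverge, lying in whichever set yields the larger partition. Because $\phi_i$ is a bijection of $\mathbb{Z}$, the partitions $\Phi_i(\lambda^{(j_0)})$ and $\Phi_i(\mu^{(j_0)})$ have $\beta$-sets $\phi_i(S)$ and $\phi_i(S')$, and $\phi_i(S)\triangle\phi_i(S')=\phi_i(D)$. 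So the whole problem becomes: show that the element of $D$ on which $\phi_i$ takes its largest value lies in $S$, given that $M=\max D\in S$.

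This is a short case analysis on the residue of $M$, using that $\phi_i$ moves every integer by at most $1$. If $M\equiv i-1$, then $\phi_i(M)=M+1$ strictly exceeds $\phi_i(m)\leq m+1\leq M$ for every other $m\in D$, so the maximiser is $M\in S$. If $M\not\equiv i-1,i$, then $\phi_i(M)=M$, and the only $m$ that could tie is $m=M-1$ with $M-1\equiv i-1$, which forces $M\equiv i$ and is excluded; again the maximiser is $M$. The one delicate case is $M\equiv i$, where $\phi_i(M)=M-1$ while the element $M-1$, if it lies in $D$, satisfies $\phi_i(M-1)=M$ and thereby overtakes $M$. Here the maximiser is $M-1$, and I must show $M-1\in S$.

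This last point is exactly where condition~\eqref{condition_anye} enters, through Lemma~\ref{noconfo-}. Suppose for contradiction that $M-1\notin S$; then $M-1\in S'\setminus S$. Since $M-1\equiv i-1$ and, by Lemma~\ref{noconfo-}, $\mu^{(j_0)}$ has no addable $i$-node, every $\beta$-number of $S'$ congruent to $i-1$ has its successor in $S'$; in particular $M=(M-1)+1\in S'$. But $M=\max D\in S$ gives $M\notin S'$, a contradiction. Hence $M-1\in S$, which completes this case, the single-component statement, and with it the lemma. I expect the residue-$i$ case to be the main obstacle: it is the only place where $\phi_i$ can reverse the order of the pair of $\beta$-numbers witnessing $\lambda^{(j_0)}>\mu^{(j_0)}$, and ruling out the offending configuration is precisely what the no-addable-$i$-node conclusion of Lemma~\ref{noconfo-}, and hence condition~\eqref{condition_anye}, supplies.
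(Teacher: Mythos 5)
Your proof is correct and follows essentially the same route as the paper's: reduce to the first component where the multipartitions differ, translate the lexicographic order into a statement about the largest element of the symmetric difference of $\beta$-sets, run a case analysis on the runner of that element, and invoke Lemma \ref{noconfo-} in the one delicate case (runner $i$) to rule out the configuration that would let $\phi_i$ reverse the comparison. The only differences are presentational (you track the maximiser of $\phi_i$ on the symmetric difference, while the paper bounds $\phi_i(y_l)$ for all $l$), and both arguments share the same implicit reliance on $\delta_i(B)\geq 0$ needed for Lemma \ref{noconfo-}.
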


\begin{proof}
Let $\bm\lambda$ and $\bm\mu$ be multipartitions whose corresponding Specht modules belong to block $B$. Let $\Lambda=(\Lambda_1, \ldots, \Lambda_r)$ and $M=(M_1, \ldots, M_r)$ be the associated sets of $\beta$-numbers,  where $\Lambda_j$ is the $\beta$-set corresponding to $\lambda^{(j)}$ and $M_j$ is the $\beta$-set corresponding to $\mu^{(j)}$ for all $j$.

Let $\bar{\bm\lambda} = \Phi_i(\bm\lambda)$ and $\bar{\bm\mu} = \Phi_i(\bm\mu)$, and let their corresponding sets of $\beta$-numbers be $\bar{\Lambda}=(\bar{\Lambda}_1, \ldots, \bar{\Lambda}_r)$ and $\bar{M}=(\bar{M}_1, \ldots, \bar{M}_r)$. 

Now if $\bm\lambda > \bm\mu$, then as sets of numbers using the lexicographic order $\Lambda>M$. Similarly $\Lambda> M$ implies that $\bm\lambda > \bm\mu$ and we obtain 
$$\bm\lambda > \bm\mu \Leftrightarrow \Lambda> M \Leftrightarrow \Lambda \setminus (\Lambda \cap M)> M\setminus (\Lambda \cap M).$$

Assume $\bm\lambda > \bm\mu$. Let $j_0$ be the minimal $j \in \{1, \ldots, r\}$ such that
$$\Lambda_j \setminus (\Lambda_j \cap M_j) > M_j \setminus (\Lambda_j \cap M_j).$$

Let $\Lambda_{j_0} \setminus (\Lambda_{j_0} \cap M_{j_0}) = \{x_1,\ldots, x_{t}\}$, with $x_l>x_{l+1}$ for $l=1, \ldots,t - 1$, and 
let $M_{j_0} \setminus(\Lambda_{j_0} \cap M_{j_0}) = \{y_1,\ldots,y_{s}\}$, with $y_l>y_{l+1}$ for $l=1, \ldots, s - 1$. Then,
$$\bar{\Lambda}_{j_0} \setminus (\bar{\Lambda}_{j_0} \cap \bar{M}_{j_0}) = \{\phi_i(x_1),\ldots, \phi_i(x_{t})\}$$ 
and
$$\bar{M}_{j_0} \setminus(\bar{\Lambda}_{j_0} \cap \bar{M}_{j_0}) = \{\phi_i(y_1),\ldots, \phi_i(y_{s})\}.$$
Since $\bm\lambda > \bm\mu$, it follows that $x_1 > y_1$. We have three cases to consider.
\begin{description}
\item[Case 1] \textit{$x_1$ belongs to runner $i-1$.} In this case $\phi_i(x_1)=x_1 + 1 > y_l + 1 \geq \phi_i(y_l)$ for all $l$. Hence $\bar{\Lambda} \setminus (\bar{\Lambda} \cap \bar{M})> \bar{M}\setminus (\bar{\Lambda} \cap \bar{M})$, so $\bar{\bm\lambda} > \bar{\bm\mu}$.

\item[Case 2] \textit{$x_1$ belongs to runner $i$.} In this case $\phi_i(x_1)=x_1 - 1$. Then, we want to show that $\phi_i(y_l) \leq x_1-2$ for all $l$. Since $x_1>y_1$, we consider the following three cases.

If $y_1\leq x_1-3$, then $y_l \leq x_1-3$ for all $l$ and $\phi_i(y_l)\leq y_l+1 \leq x_1-2$ for all $l$.

If $y_1 = x_1-2$, then $y_l \leq x_1-2$ for all $l$. If $y_l$ lies in runner $k$ where $k\neq i-1$, then $\phi_i(y_l)\leq y_l \leq x_1-2$ for all $l$. If $y_l$ lies in runner $i-1$, then $y_l\leq x_1-1-e$ and so $\phi_i(y_l) = y_l+1 \leq x_1-e\leq x_1-2$ for all $l$.

If $y_1=x_1-1$, then the abacus of $\mu^{(j_0)}$ presents a configuration $ 
        \begin{matrix}

        \bd & \nb \\

        \end{matrix}
$ in runners $i-1$ and $i$ where the bead corresponds to $y_1$. %This follows by construction of $x_1$ that is the biggest different $\beta$-number between $\lambda^{j_0}$ and $\mu^{j_0}$.
However, by Lemma \ref{noconfo-} a multipartition in the block $B$ cannot have this configuration in runners $i-1$ and $i$. Hence $y_1\neq x_1-1$.

Thus $\phi_i(x_1)=x_1-1>\phi_i(y_l)$ for all $l$, and $\bar{\Lambda} \setminus (\bar{\Lambda} \cap \bar{M})> \bar{M}\setminus (\bar{\Lambda} \cap \bar{M})$, so $\bar{\bm\lambda} > \bar{\bm\mu}$.

%%% SAME PROOF OF ABOVE BUT MORE COMPLICATED THAN NECESSARY
%Clearly $\phi_i(x_1)=x_1-1 \geq y_l + l-1 >y_l +1 \geq \phi_i(y_l)$ for all $l\geq 3$ as $x_1>y_1>y_2>y_3>\cdots$.

%If $y_2$ does not belong to runner $i - 1$, then $\phi_i(y_2) \leq y_2$ and $x_1 -1 > y_2$ as $x_1>y_1>y_2$. So $\phi_i(x_1)= x_1-1>y_2\geq \phi_i(y_2)$. If $y_2$ belongs to runner $i-1$ then $\phi_i(y_2)=y_2+1$. Since $x_1 -1$ and $y_2$ belongs to runner $i-1$ and $y_2 <x_1 - 1$ as above, so $y_2 \leq x_1-1-e$. Hence $\phi_i(x_1) = x_1 -1> y_2 +1 = \phi_i(y_2)$.
%
%If $y_1$ lies in runner $i$, then $\phi_i(x_1)=x_1-1 >y_1-1=\phi_i(y_1)$. If $y_1$ lies in runner $k$, where $k\neq i, i-1$, then $y_1\leq x_1-2$, so $\phi_i(x_1)=x_1-1>x_1-2\geq y_1 =\phi_i(y_1)$. Suppose $y_1$ lies in runner $i-1$. If $y_1=x_1-1$, then the abacus of $\mu^{(j_0)}$ presents a configuration $ 
%        \begin{matrix}
%
%        \bd & \nb \\
%
%        \end{matrix}
%$ in runners $i-1$ and $i$ where the bead corresponds to $y_1$. %This follows by construction of $x_1$ that is the biggest different $\beta$-number between $\lambda^{j_0}$ and $\mu^{j_0}$.
%However, by Lemma \ref{noconfo-} a multipartition in the block $B$ cannot have this configuration in runners $i-1$ and $i$. Hence $y_1\neq x_1-1$, so $y_1+e<x_1$ and $\phi_i(y_1)=y_1+1<x_1-1=\phi_i(x_1)$. Thus $\phi_i(x_1) >\phi_i(y_l)$ for all $l$, and $\bar{\Lambda} \setminus (\bar{\Lambda} \cap \bar{M})> \bar{M}\setminus (\bar{\Lambda} \cap \bar{M})$, so $\bar{\bm\lambda} > \bar{\bm\mu}$.

\item[Case 3] \textit{$x_1$ does not belong to runner $i$ nor to runner $i - 1$.} By similar arguments we see that $\phi_i(x_1)=x_1>\phi_i(y_l)$ for all $l\geq 2$.

If $y_1$ does not lie in runner $i-1$ then $\phi_i(y_1)=y_1$ or $\phi_i(y_1)=y_1-1$. So $\phi_i(x_1)=x_1 > y_1 \geq \phi_i(y_l)$. If $y_1$ lies in runner $i-1$, then $\phi_i(y_1)=y_1+1$. Since $x_1$ is not in runner $i-1$ or in runner $i$, then $y_1<x_1-1$. So again $\phi_i(x_1)>\phi_i(y_1)$. Thus $\phi_i(x_1) >\phi_i(y_l)$ for all $l$, and $\bar{\Lambda} \setminus (\bar{\Lambda} \cap \bar{M})> \bar{M}\setminus (\bar{\Lambda} \cap \bar{M})$, so $\bar{\bm\lambda} > \bar{\bm\mu}$.
\end{description} 
\end{proof}

Now we note that if condition \eqref{condition_anye} holds, $\Phi_i$ preserves the Kleshchev property. Indeed, we have the following result.

\begin{lem}\cite[Lemma 1.9]{Fay07}\label{Kle_preserved} Suppose $\bm\lambda$ is a multipartition, and that $[\bm\lambda]$ has no addable nodes of residue $i$ for $i\in I$. Then $\bm\lambda$ is a Kleshchev multipartition if and only if $\Phi_i(\bm\lambda)$ is.
\end{lem}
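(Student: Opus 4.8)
The plan is to recognise the restriction of $\Phi_i$ to multipartitions with no addable $i$-node as the maximal Kashiwara raising operator $\tilde e_i^{\max}$, and then to invoke the closure of the set $\mathcal{K}$ of Kleshchev multipartitions under the crystal operators. Recall that, by Ariki's theorem \cite{ari01} together with the combinatorial realisation of the crystal on Fock space, $\mathcal{K}$ is precisely the set of vertices of the crystal $B(\Lambda)$ of the integrable highest weight module for $\widehat{\mathfrak{sl}}_e$ with $\Lambda = \Lambda_{\overline{a_1}} + \cdots + \Lambda_{\overline{a_r}}$: the connected component of $\bm\varnothing$ consists exactly of the Kleshchev multipartitions, and it is closed under $\tilde e_i$ and $\tilde f_i$, in the sense that each of these operators sends a vertex of $B(\Lambda)$ either to $0$ or to another vertex of $B(\Lambda)$.

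First I would read off the $i$-signature of $\bm\lambda$, obtained by listing the addable and removable $i$-nodes of $[\bm\lambda]$ in the node order $>$ introduced before Theorem \ref{gradedbranching}, recording an $A$ for each addable node and an $R$ for each removable node, and then cancelling adjacent $R$-$A$ pairs. By hypothesis there are no addable $i$-nodes, so the signature contains no symbol $A$; hence no cancellation occurs, $\varphi_i(\bm\lambda)=0$, and $\varepsilon_i(\bm\lambda)$ equals the number $N$ of removable $i$-nodes of $[\bm\lambda]$. In particular $\bm\lambda$ sits at the foot of its $i$-string. The identification of $\tilde e_i^{\max}(\bm\lambda)$ with $\Phi_i(\bm\lambda)$ is cleanest on the abacus: the absence of addable $i$-nodes means that at every level carrying a bead on runner $i-1$ there is also a bead on runner $i$, so the $N$ removable $i$-nodes are exactly the beads lying on runner $i$ above a gap on runner $i-1$. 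Applying $\tilde e_i$ a total of $N$ times moves precisely these $N$ beads onto runner $i-1$, which is exactly the runner-swap defining $\Phi_i$. Thus $\Phi_i(\bm\lambda)=\tilde e_i^{\max}(\bm\lambda)$ is the top (the source) of the $i$-string through $\bm\lambda$, with $\varepsilon_i(\Phi_i(\bm\lambda))=0$ and $\varphi_i(\Phi_i(\bm\lambda))=N$. For $i=0$ the additional level shifts built into the definition of $\Phi_0$ are precisely those required to preserve this identification, so the argument is unchanged.

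The equivalence then follows at once. If $\bm\lambda\in\mathcal{K}$, i.e. $\bm\lambda$ is a vertex of $B(\Lambda)$, then so is $\tilde e_i^{\max}(\bm\lambda)=\Phi_i(\bm\lambda)$ by closure under $\tilde e_i$, whence $\Phi_i(\bm\lambda)$ is Kleshchev. Conversely, since $\Phi_i(\bm\lambda)$ is the top of the same $i$-string whose foot is $\bm\lambda$, we have $\tilde f_i^{\,N}(\Phi_i(\bm\lambda))=\bm\lambda$; so if $\Phi_i(\bm\lambda)\in\mathcal{K}$, then repeated application of $\tilde f_i$ keeps us inside $B(\Lambda)$ and $\bm\lambda$ is Kleshchev.

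The main obstacle is the identification $\Phi_i=\tilde e_i^{\max}$ of the second paragraph: one must match the node order $>$ used to form the $i$-signature with the good-node convention governing $\tilde e_i$, and verify that the successive applications of $\tilde e_i$ really do exhaust all $N$ removable $i$-nodes without stalling (each intermediate removal leaves the remaining removable $i$-nodes removable). This is exactly where the hypothesis that $[\bm\lambda]$ has no addable $i$-node is indispensable: without it the signature would contain uncancelled symbols $A$, the map $\Phi_i$ would add nodes as well as remove them and would no longer coincide with a pure power of $\tilde e_i$, so the clean passage along a single $i$-string — and with it the argument above — would break down.
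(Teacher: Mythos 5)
The paper does not prove this lemma itself: it is quoted verbatim from Fayers, \cite[Lemma~1.9]{Fay07}, so there is no in-paper proof to compare against. Your argument is correct and is in substance the standard one underlying Fayers' proof: since the recursive definition of Kleshchev multipartitions realises $\mathcal{K}$ as the vertex set of the connected component $B(\Lambda)$ of $\bm\varnothing$ in the Fock-space crystal, and since the absence of addable $i$-nodes forces the reduced $i$-signature to consist of $N$ uncancelled $R$'s, the identification $\Phi_i(\bm\lambda)=\tilde e_i^{\max}(\bm\lambda)$ reduces the lemma to the closure of a full subcrystal under $\tilde e_i$ and $\tilde f_i$. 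The one step you flag as ``the main obstacle'' deserves one explicit sentence rather than a gesture: removing a removable $i$-node at $(b,c,j)$ can only create removable nodes at $(b-1,c,j)$ or $(b,c-1,j)$, which have residues $i+1$ and $i-1$, and cannot destroy the removability of any other $i$-node (two removable nodes of equal residue are never adjacent); hence the removable $i$-nodes after $k$ applications of $\tilde e_i$ are exactly the $N-k$ surviving original ones, and the newly created addable $i$-nodes never cancel because $\varepsilon_i$ drops by exactly one and $\varphi_i$ rises by exactly one at each step. With that inserted, and with the cosmetic correction that a removable $i$-node is a bead on runner $i$ with a gap at the same level on runner $i-1$ (not ``above'' a gap), your proof is complete; it is neither more nor less elementary than the cited one, just phrased in crystal language rather than via normal and good nodes.
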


{Given that, we just need to notice that condition \eqref{condition_anye} implies that $[\bm\lambda]$ has no addable nodes of residue $i$. Hence, we can conclude that if condition \eqref{condition_anye} holds, then Kleshchev multipartitions are preserved by $\Phi_i$.

Similarly to Lemma 2.4 in \cite{Scopes91}, we have the following proposition. For this, let $\bm\la, \bm\mu$ be Kleshchev multipartitions of $n$ and define the \textbf{graded Cartan matrix} $\bm C (v)=(c_{\bm\la\bm\mu}(v))$ of $\mathcal{H}_{r,n}$ where $$c_{\bm\la\bm\mu}(v)=\sum\limits_{d\in \Z}[P^{\bm\la}:D^{\bm\mu}\langle d\rangle]v^d$$ is the graded multiplicity of the simple module $D^{\bm\mu}$ in the principal indecomposable module $P^{\bm \lambda}$. Recall that if we denote by $\bm D(v)$ the graded decomposition matrix of $\mathcal{H}_{r,n}$, the graded Brauer-Humphreys reciprocity tells that
\begin{equation}\label{cartan_dec_mtx}
\bm C(v) = \bm D(v)^t \bm D(v)
\end{equation}
where $\bm D(v)^t$ is the transpose of the matrix $\bm D(v)$.

\begin{thm}
Fix $i \in I$. Let $B$ be a block of $\mathcal{H}_{r,n}$ such that \eqref{condition_anye} holds and $\delta=\delta_i(B)\geq 0$. Set $\ell = \ell^{\max}_{\delta_i(B)}$. Suppose that $\bm{\lambda}$ is a $r$-multipartition of $n$ such that $S^{\bm{\lambda}}$ belongs to the block $B$. Then
\begin{enumerate}
\item[1.] $D^{\bm{\lambda}} \downarrow_{\Phi_i(B)}  \sim \sum\limits_{k=0}^{\ell} |\mathfrak{S}_{\delta}^k| D^{\Phi_i(\bm{\lambda})}\langle\ell-2k\rangle$.
\item[2.]  $D^{\Phi_i(\bm{\lambda})} \uparrow^B \sim \sum\limits_{k=0}^{\ell} |\mathfrak{S}_{\delta}^k| D^{\bm{\lambda}}\langle\ell-2k\rangle.$
\item[3.] The blocks $B$ and $\Phi_i(B)$ have the same graded decomposition matrix.
\item[4.] The blocks $B$ and $\Phi_i(B)$ have the same graded Cartan matrix.
\end{enumerate}
\end{thm}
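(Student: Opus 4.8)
The plan is to establish the four statements in a fixed order: first the module-theoretic formulas \textup{(1)} and \textup{(2)} for simple modules, then the equality of graded decomposition matrices \textup{(3)} as a formal consequence, and finally the Cartan statement \textup{(4)} directly from \textup{(3)}. Throughout write $P_\delta(v):=\sum_{k=0}^{\ell}|\mathfrak{S}_{\delta}^{k}|\,v^{\ell-2k}$ for the Laurent polynomial appearing in Proposition~\ref{lem2.1}, and recall from Lemma~\ref{lexor_preserved} and Lemma~\ref{Kle_preserved} that $\Phi_i$ restricts to an order-preserving bijection between the Kleshchev multipartitions of $B$ and those of $\Phi_i(B)$; in particular $D^{\Phi_i(\bla)}$ is defined whenever $\bla$ is Kleshchev.

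Granting \textup{(1)}, I obtain \textup{(3)} as follows. Restriction $\downarrow_{\Phi_i(B)}$ is exact and commutes with the grading shift, so it induces a $\Z[v,v^{-1}]$-linear map on graded Grothendieck groups. Expanding $[S^{\bla}]=\sum_{\bmu}d_{\bla\bmu}(v)[D^{\bmu}]$ in block $B$ and applying \textup{(1)} term by term gives
\begin{equation*}
[S^{\bla}\downarrow_{\Phi_i(B)}]=P_\delta(v)\sum_{\bmu}d_{\bla\bmu}(v)\,[D^{\Phi_i(\bmu)}],
\end{equation*}
while Proposition~\ref{lem2.1} together with the Specht decomposition in $\Phi_i(B)$ gives, after reindexing by the bijection $\bmu\mapsto\Phi_i(\bmu)$,
\begin{equation*}
[S^{\bla}\downarrow_{\Phi_i(B)}]=P_\delta(v)\,[S^{\Phi_i(\bla)}]=P_\delta(v)\sum_{\bmu}d_{\Phi_i(\bla)\,\Phi_i(\bmu)}(v)\,[D^{\Phi_i(\bmu)}].
\end{equation*}
Since $\Z[v,v^{-1}]$ is an integral domain I may cancel the nonzero factor $P_\delta(v)$, and since the classes $[D^{\Phi_i(\bmu)}]$ are $\Z[v,v^{-1}]$-linearly independent I may equate coefficients, obtaining $d_{\bla\bmu}(v)=d_{\Phi_i(\bla)\,\Phi_i(\bmu)}(v)$ for all $\bla,\bmu$. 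This is exactly \textup{(3)}, the identification of rows and columns being $\Phi_i$. Statement \textup{(4)} is then immediate from graded Brauer--Humphreys reciprocity \eqref{cartan_dec_mtx}: if $\bm D(v)$ agrees for the two blocks under the $\Phi_i$-identification, so does $\bm C(v)=\bm D(v)^t\bm D(v)$.

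It therefore remains to prove the simple-module formulas \textup{(1)} and \textup{(2)}, which are the graded analogue of \cite[Lemma~2.4]{Scopes91} and form the core of the argument. Note that the Grothendieck-group bookkeeping alone is \emph{insufficient} here: it is circular, since it presupposes the equality $d_{\bla\bmu}(v)=d_{\Phi_i(\bla)\Phi_i(\bmu)}(v)$ that \textup{(3)} is meant to deliver. The strategy is instead to show that $D^{\bla}\downarrow_{\Phi_i(B)}$ is \emph{isotypic}, with unique simple constituent $D^{\Phi_i(\bla)}$ and graded multiplicity exactly $P_\delta(v)$. I would use that both $\downarrow_{\Phi_i(B)}$ and $\uparrow^{B}$ are exact and biadjoint up to a grading shift, that graded simple modules are self-dual, and that $\downarrow_{\Phi_i(B)}$ commutes with the graded duality. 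Because Lemma~\ref{noconfo-} guarantees that no multipartition in $B$ has an addable $i$-node, every layer of the Specht filtration of $S^{\bla}\downarrow_{\Phi_i(B)}$ in Proposition~\ref{lem2.1} is a grading shift of $S^{\Phi_i(\bla)}$, whose head is $D^{\Phi_i(\bla)}$; exactness then places $D^{\Phi_i(\bla)}$ in the head of $D^{\bla}\downarrow_{\Phi_i(B)}$, and self-duality places it in the socle.

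The hard part, and the step I expect to be the main obstacle, is to rule out every other simple constituent and to pin down the graded multiplicity. I would do this by analysing the composites $\uparrow^{B}\!\circ\downarrow_{\Phi_i(B)}$ and $\downarrow_{\Phi_i(B)}\!\circ\uparrow^{B}$: using Proposition~\ref{lem2.1} on Specht modules together with the absence of addable $i$-nodes, these composites act as graded multiples of the identity on $B$ and on $\Phi_i(B)$. Applying such a composite to $D^{\bla}$ forces $\uparrow^{B}\!\bigl(D^{\bla}\downarrow_{\Phi_i(B)}\bigr)$ to be a graded-isotypic sum of copies of $D^{\bla}$; combined with the exactness and faithfulness of $\uparrow^{B}$, this should force $D^{\bla}\downarrow_{\Phi_i(B)}$ itself to be isotypic of type $D^{\Phi_i(\bla)}$, after which the (now legitimate) comparison of Grothendieck classes fixes the multiplicity at $P_\delta(v)$. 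The delicate bookkeeping of the grading shifts — ensuring that the shifts match the Laurent polynomial $P_\delta(v)$ and not merely its value $\delta!$ at $v=1$ — is where the graded argument departs from Scopes' ungraded one and demands the most care. Statement \textup{(2)} follows by the same reasoning with the roles of $\downarrow_{\Phi_i(B)}$ and $\uparrow^{B}$ interchanged.
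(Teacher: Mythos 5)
Your reductions of (3) to (1) and of (4) to (3) are correct and essentially match the paper's endgame, but the core of the theorem --- statements (1) and (2) --- is left as a sketch with an acknowledged hole, and that hole is exactly where the paper's actual argument lives. The mechanism you are missing is an induction along the lexicographic order. The paper lists the Kleshchev multipartitions of $B$ as $\bla_1>\dots>\bla_y$, notes via Lemma~\ref{lexor_preserved} and Lemma~\ref{Kle_preserved} that $\Phi_i$ carries this ordered list to the ordered list for $\Phi_i(B)$, and proves (1), (2) \emph{and} the equality of decomposition numbers simultaneously by downward induction: the base case is $\bla_y$, where unitriangularity forces $S^{\bla_y}=D^{\bla_y}$ so that Proposition~\ref{lem2.1} already gives (1) and (2); in the inductive step, exactness of restriction applied to the unitriangular expansion of $S^{\bla_{l-1}}$ shows that $D^{\bla_{l-1}}\downarrow_{\Phi_i(B)}$ can only involve $D^{\Phi_i(\bla_j)}$ with $j\geq l-1$, and comparing the two computations of $S^{\bla_{l-1}}\downarrow_{\Phi_i(B)}\uparrow^{B}$ (one via Proposition~\ref{lem2.1}, one via the inductive hypothesis for $j\geq l$) kills the coefficients with $j\geq l$ and pins down $\alpha_{l-1}(v)\beta_{l-1}(v)=P_\delta(v)^2$, after which the restriction of $S^{\bla_{l-1}}$ forces $\alpha_{l-1}(v)=\beta_{l-1}(v)=P_\delta(v)$ and yields $d_{\bla_{l-1}\bla_j}(v)=d_{\Phi_i(\bla_{l-1})\Phi_i(\bla_j)}(v)$ in the same stroke.

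Your proposed substitute for this induction does not close the gap. From Proposition~\ref{lem2.1} one does get $[M\downarrow_{\Phi_i(B)}\uparrow^{B}]=P_\delta(v)^2[M]$ in the graded Grothendieck group for every $M$ in $B$ (the Specht classes span), but writing $[D^{\bla}\downarrow_{\Phi_i(B)}]=\sum_{\bm\nu}\alpha_{\bm\nu}(v)[D^{\bm\nu}]$ only tells you that $\sum_{\bm\nu}\alpha_{\bm\nu}(v)[D^{\bm\nu}\uparrow^{B}]$ is a multiple of $[D^{\bla}]$; to rule out a spurious constituent $D^{\bm\nu}$ with $\bm\nu\neq\Phi_i(\bla)$ you would need to know that $[D^{\bm\nu}\uparrow^{B}]$ contains some simple other than $D^{\bla}$ (or at least is nonzero after projection to $B$), which is precisely the content of statement (2) for $\bm\nu$ --- so the argument is circular without an ordering to induct on. The head/socle and self-duality observations locate one copy of $D^{\Phi_i(\bla)}$ but do not bound the remaining factors or the grading shifts. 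Note also that your "faithfulness of $\uparrow^{B}$" is about the block projection of induction, not induction itself, and is not established anywhere in the paper. In short: the skeleton is right, but the proof of (1) and (2) as written is not a proof, and the missing ingredient is the triangularity-plus-order-preservation induction that the paper carries out.
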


\begin{proof}
Let $\bm\lambda_1>\bm\lambda_2> \ldots >\bm\lambda_y$ be the Kleshchev multipartitions whose Specht modules belong to $B$; then $\Phi_i(\bm\lambda_1)>\Phi_i(\bm\lambda_2)> \ldots >\Phi_i(\bm\lambda_y)$ are the Kleshchev multipartitions whose Specht modules belong to $\Phi_i(B)$ by Lemmas \ref{lexor_preserved} and \ref{Kle_preserved}. Suppose
\begin{equation}\label{dec_numb}
S^{\bm\lambda} \sim\sum_{j=1}^{y} d_{\bm\lambda\bm\lambda_j}(q) D^{\bm\lambda_j}, \quad d_{\bm\lambda\bm\lambda_j}(v)\in \mathbb{N}[v,v^{-1}] 
\end{equation}
%If $\bm\lambda \trianglerighteq \bm\mu$, then $\bm\lambda \geq \bm\mu$, that is equivalent to say if $\bm\lambda < \bm\mu$, then $\bm\lambda \ntrianglerighteq \bm\mu$. Hence, 
By point (2) of Theorem \ref{decmtxHQ} we have that if $\bm\lambda < \bm\mu$, then $d_{\bm\lambda\bm\mu}(v)=0$.

Hence, \eqref{dec_numb} becomes
\begin{equation}\label{dec_numb_values}
S^{\bm\lambda} \sim\sum_{j=1}^{y} d_{\bm\lambda\bm\lambda_j}(v) D^{\bm\lambda_j}, \quad d_{\bm\lambda\bm\lambda_j}(v)=\begin{cases}
1 & \text{if } \bm\lambda=\bm\lambda_j\\
0 & \text{if } \bm\lambda<\bm\lambda_j\\
\end{cases}.
\end{equation}
In particular, $S^{\bm\lambda_y}=D^{\bm\lambda_y}$. We want to prove points 1. and 2. for $\bm\la_1,  \ldots, \bm\la_y$.

By Proposition \ref{lem2.1} we have
\begin{equation*}
    \begin{split}
        S^{\bm{\lambda}_y} \downarrow_{\Phi_i(B)} & \sim \sum_{k=0}^{\ell} |\mathfrak{S}_{\delta}^k| v^{\ell-2k}S^{\Phi_i(\bm{\lambda}_y)}, \text{ and},\\
        S^{\Phi_i(\bm{\lambda}_y)} \uparrow^B & \sim \sum_{k=0}^{\ell} |\mathfrak{S}_{\delta}^k|v^{\ell-2k} S^{\bm{\lambda}_y},
    \end{split}
\end{equation*}
so, since $S^{\bm\lambda_y}=D^{\bm\lambda_y}$ we get the result:
\begin{equation*}
    \begin{split}
        D^{\bm{\lambda}_y} \downarrow_{\Phi_i(B)} & \sim \sum_{k=0}^{\ell} |\mathfrak{S}_{\delta}^k| v^{\ell-2k}D^{\Phi_i(\bm{\lambda}_y)}, \text{ and},\\
        D^{\Phi_i(\bm{\lambda}_y)} \uparrow^B & \sim \sum_{k=0}^{\ell} |\mathfrak{S}_{\delta}^k|v^{\ell-2k} D^{\bm{\lambda}_y}.
    \end{split}
\end{equation*}

Now, suppose that points 1. and 2. holds for $\bm\lambda_l, \ldots, \bm\lambda_{y}$ with $1<l\leq y$, that is for $l\leq j\leq y$ we have
\begin{equation*}
    \begin{split}
        D^{\bm{\lambda}_j} \downarrow_{\Phi_i(B)} & \sim \sum_{k=0}^{\ell} |\mathfrak{S}_{\delta}^k| v^{\ell-2k}D^{\Phi_i(\bm{\lambda}_j)}, \text{ and},\\
        D^{\Phi_i(\bm{\lambda}_j)} \uparrow^B & \sim \sum_{k=0}^{\ell} |\mathfrak{S}_{\delta}^k|v^{\ell-2k} D^{\bm{\lambda}_j}.
    \end{split}
\end{equation*}

Thus, we want to prove points 1. and 2. for $\bm\lambda_{l-1}$. 
Then
\begin{equation*}
\begin{split}
(S^{\bm\lambda_{l-1}})\downarrow_{\Phi_i(B)}\uparrow^{B} &\underset{\mathclap{\tikz \node {$\uparrow$} node [below=1ex] {by Prop. \ref{lem2.1}};}}{\sim} \sum_{k,h=0}^{\ell} |\mathfrak{S}_{\delta_{i}(B)}^k| |\mathfrak{S}_{\delta_{i}(B)}^h| v^{\ell-2(k+h)} S^{\bm\lambda_{l-1}}\\
&\underset{\mathclap{\tikz \node {$\uparrow$} node [below=1ex] {by \eqref{dec_numb_values}};}}{\sim} \sum_{k,h=0}^{\ell} |\mathfrak{S}_{\delta_{i}(B)}^k| |\mathfrak{S}_{\delta_{i}(B)}^h|v^{\ell-2(k+h)} \left(\sum_{j=l}^{y} (d_{\bm\lambda_{l-1}\bm\lambda_j}(v)D^{\bm\lambda_j} + D^{\bm\lambda_{l-1}})\right),
\end{split}
\end{equation*}
and, applying first \eqref{dec_numb_values} and then Proposition \ref{lem2.1} together with the inductive  hypothesis on $j\geq l$
\begin{equation*}
(S^{\bm\lambda_{l-1}})\downarrow_{\Phi_i(B)}\uparrow^{B} \sim \sum_{j=l}^{y} \left(\sum_{k,h=0}^{\ell} |\mathfrak{S}_{\delta_{i}(B)}^k| |\mathfrak{S}_{\delta_{i}(B)}^h|v^{\ell-2(k+h)}\right)d_{\bm\lambda_{l-1}\bm\lambda_j}(v)D^{\bm\lambda_j} + (D^{\bm\lambda_{l-1}})\downarrow_{\Phi_i(B)}\uparrow^{B}.
\end{equation*}
So 
%For the short way to conclude.
%$$D^{\bm\lambda_l})\downarrow_{\Phi_i(B)}\uparrow^{B} \sim (\delta_i(B)!)^2 D^{\bm\lambda_l}.$$
\begin{equation}\label{Dleq1}
D^{\bm\lambda_{l-1}}\downarrow_{\Phi_i(B)}\uparrow^{B} \sim \left(\sum_{k,h=0}^{\ell} |\mathfrak{S}_{\delta_{i}(B)}^k| |\mathfrak{S}_{\delta_{i}(B)}^h|v^{\ell-2(k+h)} \right)D^{\bm\lambda_{l-1}}.
\end{equation}
%Short way to conclude the proof without giving all the details.
%Hence, using Proposition \ref{lem2.1} and  \eqref{dec_numb_values} we can conclude that
%$$D^{\bm\lambda_l}\downarrow_{\Phi_i(B)} \sim \delta_i(B)! D^{\Phi_i(\bm\lambda_l)} \quad\text{ and }\quad D^{\Phi_i(\bm\lambda_l)}\uparrow^{B} \sim \delta_i(B)! D^{\bm\lambda_l}$$
%and
%\begin{equation}\label{point3}
%S^{\Phi_i(\bm\lambda_l)} \sim \sum_{j=1}^{y} d_{\bm\lambda_l\bm\lambda_j} D^{\Phi_i(\bm\lambda_j)}.
%\end{equation}
%Thus, points 1. and 2. are proved by induction, and points 3. and 4. follow immediately from \eqref{point3}.
%Long way to conclude the proof giving all the details.
Now notice that for some $\alpha_j(v), \beta_j(v) \in \mathbb{N}[v,v^{-1}]$
\begin{align}
D^{\bm\lambda_{l-1}}\downarrow_{\Phi_i(B)} &\sim \sum_{j=l-1}^{y} \alpha_j(v) D^{\Phi_i(\bm\lambda_j)},\label{alpharel}\\
D^{\Phi_i(\bm\lambda_{l-1})}\uparrow^{B} &\sim \sum_{j=l-1}^{y} \beta_j(v) D^{\bm\lambda_j}.\label{betarel}
\end{align}
Then, by the hypothesis on $j\geq l$ and using \eqref{alpharel}, \eqref{betarel}
\begin{equation}\label{Dleq2}
\begin{split}
(D^{\bm\lambda_{l-1}})\downarrow_{\Phi_i(B)}\uparrow^{B} \sim &\sum_{j=l}^{y} \left(\left(\sum_{k=0}^{\ell} |\mathfrak{S}_{\delta}^k|v^{\ell-2k}\right)\alpha_j(v) + \alpha_{l-1}(v)\beta_j(v)\right)D^{\bm\lambda_j} \\
&+ \alpha_{l-1}(v)\beta_{l-1}(v) D^{\bm\lambda_{l-1}}.    
\end{split}
\end{equation}
Now combining \eqref{Dleq1} and \eqref{Dleq2}, we get
\begin{equation*}
\begin{split}
     &\left(\sum_{k,h=0}^{\ell} |\mathfrak{S}_{\delta_{i}(B)}^k| |\mathfrak{S}_{\delta_{i}(B)}^h|v^{\ell-2(k+h)} \right)D^{\bm\lambda_{l-1}} \\
     &\sim \sum_{j=l}^{y} \left(\left(\sum_{k=0}^{\ell} |\mathfrak{S}_{\delta}^k|v^{\ell-2k}\right)\alpha_j(v) + \alpha_{l-1}(v)\beta_j(v)\right)D^{\bm\lambda_j} + \alpha_{l-1}(v)\beta_{l-1}(v) D^{\bm\lambda_{l-1}}
\end{split}
\end{equation*}
and so by the uniqueness of the composition series of $(D^{\bm\lambda_{l-1}})\downarrow_{\Phi_i(B)}\uparrow^{B}$ we have 
\begin{equation}\label{alphabeta}
  \alpha_{l-1}(v)\beta_{l-1}(v)=\sum_{k,h=0}^{\ell} |\mathfrak{S}_{\delta_{i}(B)}^k| |\mathfrak{S}_{\delta_{i}(B)}^h|v^{\ell-2(k+h)}  
\end{equation}
and $$\alpha_j(v) = 0 = \beta_j(v) \text{ for all }l \leq j\leq y.$$
Thus, by \eqref{alpharel} and \eqref{betarel} we obtain
$$D^{\bm\lambda_{l-1}}\downarrow_{\Phi_i(B)} \sim \alpha_{l-1}(v) D^{\Phi_i(\bm\lambda_{l-1})} \quad\text{ and }\quad D^{\Phi_i(\bm\lambda_{l-1})}\uparrow^{B} \sim \beta_{l-1}(v) D^{\bm\lambda_{l-1}}$$ 
with $\alpha_{l-1}(v)$ and $\beta_{l-1}(v)$ satisfying \eqref{alphabeta}.
Hence, using Proposition \ref{lem2.1} and  \eqref{dec_numb_values} we have that
\begin{equation*}
(S^{\bm\lambda_{l-1}})\downarrow_{\Phi_i(B)} \sim \sum_{j=l}^{y} \left(\sum_{k=0}^{\ell} |\mathfrak{S}_{\delta_{i}(B)}^k| v^{\ell-2k} \right)d_{\bm\lambda_{l-1}\bm\lambda_j}(v)D^{\Phi_i(\bm\lambda_j)} + \alpha_{l-1}(v) D^{\Phi_i(\bm\lambda_{l-1})}
\end{equation*}
and,
\begin{equation*}
\begin{split}
   &\left(\sum_{k=0}^{\ell} |\mathfrak{S}_{\delta_{i}(B)}^k| v^{\ell-2k} \right) S^{\Phi_i(\bm\lambda_{l-1})}\\
   &\sim \left(\sum_{k=0}^{\ell} |\mathfrak{S}_{\delta_{i}(B)}^k| v^{\ell-2k} \right) \left(\sum_{j=l}^{y} d_{\Phi_i(\bm\lambda_{l-1})\Phi_i(\bm\lambda_j)}(v)D^{\Phi_i(\bm\lambda_j)} + D^{\Phi_i(\bm\lambda_{l-1})}\right) 
\end{split}
\end{equation*}
and so we can conclude that $d_{\bm\lambda_{l-1}\bm\lambda_j}(v)=d_{\Phi_i(\bm\lambda_{l-1})\Phi_i(\bm\lambda_j)}(v)$ for all $l \leq j \leq y$, $\alpha_{l-1}(v)=\left(\sum_{k=0}^{\ell} |\mathfrak{S}_{\delta_{i}(B)}^k| v^{\ell-2k} \right)$ and so $\beta_{l-1}(v)=\left(\sum_{k=0}^{\ell} |\mathfrak{S}_{\delta_{i}(B)}^k| v^{\ell-2k} \right)$.
Therefore,
$$D^{\bm\lambda_{l-1}}\downarrow_{\Phi_i(B)} \sim \left(\sum_{k=0}^{\ell} |\mathfrak{S}_{\delta_{i}(B)}^k| v^{\ell-2k} \right) D^{\Phi_i(\bm\lambda_{l-1})}$$
and $$D^{\Phi_i(\bm\lambda_{l-1})}\uparrow^{B} \sim \left(\sum_{k=0}^{\ell} |\mathfrak{S}_{\delta_{i}(B)}^k| v^{\ell-2k} \right) D^{\bm\lambda_{l-1}}$$
and
\begin{equation}\label{point3}
S^{\Phi_i(\bm\lambda_{l-1})} \sim \sum_{j=l-1}^{y} d_{\bm\lambda_{l-1}\bm\lambda_j}(v) D^{\Phi_i(\bm\lambda_j)}.
\end{equation}
Thus, points 1. and 2. are proved for any $\bm\la_j$ with $1\leq j \leq y$, point 3. follows immediately from \eqref{point3}, and point 4. follows from \eqref{cartan_dec_mtx}.
\end{proof}}

\section*{Acknowledgements}
This paper forms part of work towards a PhD degree under the supervision of Dr. Sin\'ead Lyle at University of East Anglia, and the author wishes to thank her for her direction and support throughout.

%    Bibliographies can be prepared with BibTeX using amsplain,
%    amsalpha, or (for "historical" overviews) natbib style.
\bibliographystyle{amsalpha}
%    Insert the bibliography data here.

%\listoftodos

\end{document}